\newlist{mathlist}{enumerate}{5}
\setlist[mathlist]{label={\textup{(\roman*)}}}
\theoremstyle{plain}
\newtheorem{theorem}{Theorem}
\newtheorem{prop}[theorem]{Proposition}
\newtheorem{lem}[theorem]{Lemma}
\newtheorem{cor}[theorem]{Corollary}
\theoremstyle{definition}
\newtheorem{assumption}{Assumption}
\newtheorem{remark}[theorem]{Remark}
\newcommand*{\transpose}{%
  {\mathpalette\@transpose{}}%
}
\newcommand*{\@transpose}[2]{%
  \raisebox{\depth}{$\m@th#1\intercal$}%
}
\renewcommand{\epsilon}{\varepsilon}
\renewcommand{\rho}{\varrho}
\renewcommand{\phi}{\varphi}
\renewcommand{\geq}{\geqslant}
\renewcommand{\leq}{\leqslant}
\newcommand{\norm}[2][]{#1\lVert #2 #1\rVert}
\newcommand{\abs}[2][]{#1\lvert #2 #1\rvert}
\newcommand{\floor}[2][]{#1\lfloor #2 #1\rfloor}
\title{Sharp $L \log L$ condition for supercritical Galton--Watson processes with countable types}
\author[1,2]{Mathilde \textsc{André}} 
\author[3]{Jean-Jil \textsc{Duchamps}}
\affil[1]{Institut de Biologie de l'ENS (IBENS), École Normale Supérieure, PSL Université, CNRS UMR 8197,
INSERM U1024, Paris, France}
\affil[2]{Faculty of Mathematics, University of Vienna, Oskar-Morgenstern-Platz 1, 1090 Wien, Austria}
\affil[3]{Marie et Louis Pasteur University, CNRS, LmB (UMR 6623), F-25000 Besançon, France}
\begin{document}

\maketitle

\begin{abstract}
  We investigate Kesten--Stigum-like results for multi-type
  Galton--Watson processes with a countable number of types in a
  general setting, allowing us in particular to consider processes
  with an infinite total population at each generation. Specifically,
  a sharp $L\log L$ condition is found under the only assumption that
  the mean reproduction matrix is positive recurrent in the sense
  of~\cite{vere-jones_ergodic_1967}. The type distribution is shown to
  always converge in probability in the recurrent case, and under
  conditions covering many cases it is shown to converge almost
  surely. We finally apply these results to the study of a
    directed network built from countably many configuration models as
    a model of epidemics.
\end{abstract}

\setcounter{tocdepth}{1}
\tableofcontents
\defcitealias{andre_duchamps_directed_2025}{André and Duchamps (in prep.)}

\section{Introduction and main results} \label{sec:intro}

The large-time behavior of Galton--Watson processes, both in the
single-type and multi-type settings, has been thoroughly explored in
various
works~\citep[\textit{e.g.}][]{harris_theory_1964,athreya_branching_1972}.
However, more general settings remain less investigated. The
Galton--Watson process with countably many types arises naturally when
studying discretized structured population, such as aging processes or
spatially structured populations (\textit{e.g.}
\citealt{bansaye_queueing_2018,ligonniere_kesten_2024,braunsteins_pathwise_2019}).
Notably, although some Kesten--Stigum limit theorems have been
established for such processes, these results often assume additional
conditions such as compactness of the mean reproduction operator or
some kind of uniform control of the types. These assumptions
  are natural to deal with general type spaces, but they may seem more
  artificial for countable type space. This was apparent when studying
  an age-structured branching process arising from a stochastic model
  of epidemics, which we present in the last part of the article (see
  Section~\ref{sec:applications}). We thus explored weak sufficient
  conditions to obtain a crucial result for this application: the
  general convergence of the type distribution of countable-type
  branching processes in probability stated in
  Theorem~\ref{thm:type_convergence}~(ii). Along the way, we found
  some improvements over the long-existing Kesten--Stigum theory in
  the supercritical countable-type case
  \citep{moy_extensions_1967,asmussen_strong_1976}, with assumptions
  that are minimal regarding the asymptotic behavior of the iterated
  mean reproduction matrix (\textit{i.e.} we only assume a Perron--Frobenius
  behavior). We also related our findings with the phenomenon of local
  extinction, which concerns a number of recent works
  \citep[\emph{e.g.}][]{braunsteins_pathwise_2019,ligonniere_kesten_2024}.
 To the best of our knowledge, no sharp Kesten--Stigum-like result
had been provided yet for the objects we consider, namely the general
supercritical countable-type recurrent Galton--Watson processes. Our
aim is to provide one such result, using as our main ingredient the
spinal construction adapted from the seminal work of
\citet{lyons_pemantle_peres_1995}. This gives the theoretical results
an intrinsic interest.

This article is organized as follows. In the remainder of
  Section~\ref{sec:intro}, we introduce the model and notation and
  state our main results (Theorem~\ref{thm:ks_condition} and
  Theorem~\ref{thm:type_convergence}). Section~\ref{sec:examples} is
  devoted to a simple way of building examples of countable-type
  branching processes, allowing us to provide examples and
  counter-examples to some of our statements. In
  Section~\ref{sec:proof_ks_results}
  and~\ref{sec:proof_type_convergence} we present the proofs of the
  main results. Finally we exploit our results in
  Section~\ref{sec:applications} in the study of the local weak limit
  of a countable layering of configuration models as a general model
  of epidemics.

\subsection*{Notation}
In the following, we consider a countable set $\mathcal{X}$. We will write $\mathbb{R}^{\mathcal{X}}$ and $\mathbb{N}^{\mathcal{X}}$ for the column vectors indexed by $\mathcal{X}$ with entries in respectively $\mathbb{R}$ and $\mathbb{N}$ (in this article we denote by $\mathbb{N}$ the set of \emph{nonnegative} integers).
For $u, v\in\mathbb{R}^{\mathcal{X}}$ we denote by $u^\transpose$ the transpose of $u$ such that the inner product (when it makes sense) is $\langle u,v\rangle \coloneqq u^\transpose v = \sum_{x\in\mathcal{X}}u_x v_x\in \mathbb{R}$ and $vu^\transpose \in\mathbb{R}^{\mathcal{X}\times \mathcal{X}}$ is the matrix with entries $(v_x u_y)_{x,y\in\mathcal{X}}$.
Note that we will only deal with vectors and matrices with nonnegative entries, so that matrix multiplication is always well-defined and associative.
If $\mathcal{X}'\subset \mathcal{X}$ we write $v|_{\mathcal{X}'}$ for the vector $(v_x\mathds{1}_{x\in \mathcal{X}'})_x$, so that in particular $\langle u, v|_{\mathcal{X}'}\rangle \coloneqq\sum_{x\in\mathcal{X}'}u_xv_x$.
We will also use the notation $e_y=(\mathds{1}_{x=y})_{x}\in \mathbb{N}^{\mathcal{X}}$.

Throughout this work we will use the notion of biased random variables, always in the same usual sense which we recall below. 
Let $A$ be any random variable taking values in a measurable space
$(E, \mathcal{A})$ and $B$ a nonnegative random variable with positive
and finite expectation. Then, the sentence ``$A$ \emph{biased by} $B$'', refers to a random variable $A'$ with values in $E$ such that the distribution of $A'$ is characterized by
\begin{equation}\label{eq:biased_rv}
\mathbb{E}\big[F(A')\big]\ =\ \frac{\mathbb{E}[BF(A)]}{\mathbb{E}[B]},
\end{equation}
for all nonnegative measurable functionals $F:E \to\mathbb{R}_+$.
Note that often, we will take use $E= \mathbb{R}^{\mathcal{X}}$,
equipped with the product $\sigma$-algebra.

\subsection*{The model}
We consider a multi-type Galton--Watson process, with a countable set
of types $\mathcal{X}$. This framework naturally models the
  dynamics of a structured population of particles.

The model is defined from a fixed family of distributions
  $(\mathcal{L}_x)_{x\in \mathcal{X}}$ on $\mathbb{N}^{\mathcal{X}}$.
  For each $x,y \in \mathcal{X}$, for a random variable
  $L^{(x)}\sim \mathcal{L}_x$, we write $L^{(x)}_y$ for its
  $y$-marginal, and assume that the first moment
  $M_{xy}\coloneqq\mathbb{E}[L^{(x)}_y]$ is finite. We define the
  nonnegative matrix $M = (M_{xy})_{x,y\in \mathcal{X}}$ and further
  assume that $M^n$ has only finite coordinates for all $n\geq 1$. Let
  us fix some $Z(0)\in \mathbb{N}^{\mathcal{X}}$ with a finite number
  of nonzero entries. The Galton--Watson process with offspring
  distributions $(\mathcal{L}_x)_{x\in \mathcal{X}}$ and started from
  $Z(0)$ is defined as a sequence of $\mathbb{N}^{\mathcal{X}}$-valued
  random variables $(Z(n))_{n\geq 0}$ satisfying the induction:
  \[
    Z(n+1)\ =\ \sum_{x\in \mathcal{X}} \sum_{i=1}^{Z_x(n)} L^{(x,i,n)},
    \qquad n\geq 0,
  \]
  where $(L^{(x,i,n)})_{x\in \mathcal{X}, i\geq 1, n\geq 0}$ is a
  collection of independent random variables, with
  $L^{(x,i,n)}\sim \mathcal{L}_x$.

The random variable $Z_y(n)$ models the number of type-$y$
  individuals in the $n$th generation of a population in which
  individuals of any given type $x\in \mathcal{X}$ reproduce
  independently of the others with a progeny distributed as the random
  variable $L^{(x)}$. We point out the fact that the progeny of an
individual is almost surely finite coordinate-wise, however their
total number of children need not be. Therefore, the total number of
individuals in the population at generation $n$, given by
$\sum_{x\in\mathcal{X}}Z_x(n)$, is allowed to be infinite. Note
  that the assumption that $M^n < \infty$ (any equality or inequality
  we write concerning matrices or vectors means that it holds
  coordinate-wise) for all $n\geq 0$ implies that for any
  $x,y\in \mathcal{X}$, starting from $Z(0)=e_x$, we have
  $\mathbb{E}[Z_y(n)]=M^n_{xy} < \infty$.

We summarize here some known definitions and results on the extension of Perron--Frobenius theory to countable nonnegative matrices, for which we refer to~\citet[Chapter~7, Section~1]{kitchens_symbolic_1998}; see also~\citet[Chapter~6]{seneta_non-negative_1981} or~\citet{vere-jones_ergodic_1967}.
The matrix $M$ is said to be \textit{irreducible} if the adjacency graph of $M$ (\textit{i.e.} the graph with vertex set $\mathcal{X}$ and where $x\to y$ is a directed edge iff $M_{xy}>0$) is strongly connected.
It is said to be \textit{aperiodic} if for all $x\in\mathcal{X},\
\gcd\{n :  M^{(n)}_{xx}>0\}=1$.
If $M$ is irreducible the period is common to all indices and ``for all'' can be replaced by ``for some''.
Let $M$ be irreducible and aperiodic.
Its Perron value, defined as 
\begin{equation}\label{eq:rho}
  \varrho\ \coloneqq\ \lim_{n\to\infty}(M^n_{xx})^{1/n}\ \in\ (0,\infty], 
\end{equation}
  is independent of $x\in\mathcal{X}$ and $\varrho^{-1}$ is the radius of convergence of the power series $f^{(x,y)}:s\mapsto\sum_{n\in \mathbb{N}} M^{n}_{xy}s^n$, for all pairs $(x,y)$.
Then, $M$ is said to be \textit{recurrent} if $f^{(x,x)}(\varrho^{-1})=\infty$ for some $x$ (hence, for all $x$), otherwise \textit{transient}.

In the finite type setting and under assumptions of irreducibility and aperiodicity, the Perron--Frobenius theorem gives the mean behavior of the process, whose exponential growth is driven by the greatest eigenvalue of the mean matrix.
When it comes to infinite non-negative matrices, the following proposition~\citep[Theorem~7.1.3]{kitchens_symbolic_1998} gives an analogous result.

\begin{prop}[Extension of Perron--Frobenius theory to infinite matrices]\label{prop:perron_frobenius}
  Assume $M$ is irreducible, aperiodic and recurrent, with a finite Perron value $\varrho$. 
  Then, $\varrho$ is the largest eigenvalue of $M$ and is simple. Moreover, there exists associated left and right eigenvectors $\tilde{h}$ and $h$ in $\mathbb{R}^{\mathcal{X}}$, which are positive and unique up to multiplicative constants.

  If $\langle \tilde{h}, h\rangle<\infty$, then  $M$ is said to be \emph{positive recurrent} and normalizing the eigenvectors such that $\langle \tilde{h}, h\rangle=1$, we have that for all $x,y\in \mathcal{X}$,
  \begin{equation*}
    \lim_{n\to\infty} \varrho^{-n}M^{n}_{xy}\ =\ \big(h\tilde{h}^\transpose\big)_{xy}\ =\  h_x\tilde{h}_y.
   \end{equation*}   
  Otherwise, if $\langle \tilde{h}, h\rangle=\infty$, $M$ is said to be \emph{null recurrent} and we have $\lim_{n\to\infty} \varrho^{-n}M^{n} = 0$ coordinate-wise.
\end{prop}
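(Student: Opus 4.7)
The plan is to follow the Vere-Jones generating-function approach. Fix a reference type $y_0 \in \X$ and, for $x \in \X$, let ${}_{y_0}\!M^{(n)}_{x y_0}$ be the sum over paths of length $n$ from $x$ to $y_0$ that avoid $y_0$ before their final step, with associated generating function $F_{x y_0}(s) = \sum_{n \geq 1} {}_{y_0}\!M^{(n)}_{x y_0} s^n$; define $F_{y_0 y}$ dually from the left. A first-step decomposition yields the classical identities
\[
P_{y_0 y_0}(s) = \frac{1}{1 - F_{y_0 y_0}(s)}, \qquad P_{x y_0}(s) = F_{x y_0}(s) P_{y_0 y_0}(s) \ \text{for } x \neq y_0,
\]
where $P_{xy}(s) := \sum_{n \geq 0} M^n_{xy} s^n$ has common radius of convergence $\rho^{-1}$. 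Recurrence at $y_0$ thus reads $F_{y_0 y_0}(\rho^{-1}) = 1$, and a Tauberian/ratio-limit argument shows that $h_x := F_{x y_0}(\rho^{-1})$ is finite and positive for every $x$ (with $h_{y_0} = 1$); the left candidate $\tilde h_y := F_{y_0 y}(\rho^{-1})$ is analogous. Evaluating a first-step rewriting of $F_{x y_0}$ at $\rho^{-1}$ and using $h_{y_0} = 1$ reorganizes into $M h = \rho h$; symmetrically $\tilde h M = \rho \tilde h$.

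For the asymptotics, I apply the discrete renewal theorem to $u_n := \rho^{-n} M^n_{y_0 y_0}$. The sequence $p_n := \rho^{-n} \cdot {}_{y_0}\!M^{(n)}_{y_0 y_0}$ is a probability distribution on positive integers (by recurrence) with $\gcd\{n : p_n > 0\} = 1$ (by aperiodicity), and $u_n$ satisfies the renewal equation $u_n = \delta_{n,0} + \sum_{k=1}^n p_k u_{n-k}$, so Blackwell's theorem gives
\[
\rho^{-n} M^n_{y_0 y_0} \longrightarrow \frac{1}{\mu}, \qquad \mu := \sum_{n \geq 1} n p_n \in (1, \infty]
\]
(with convention $1/\infty = 0$). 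To transfer this to off-diagonal entries, I split a path from $x$ to $y$ at its first and last visits to $y_0$:
\[
M^n_{xy} = \sum_{k, l \geq 1} {}_{y_0}\!M^{(k)}_{x y_0} \cdot M^{n-k-l}_{y_0 y_0} \cdot {}_{y_0}\!M^{(l)}_{y_0 y}.
\]
Multiplied by $\rho^{-n}$, the middle factor is uniformly bounded in $k,l,n$ (since it converges), so each summand is dominated by a summable term $C \rho^{-k} {}_{y_0}\!M^{(k)}_{x y_0} \cdot \rho^{-l} {}_{y_0}\!M^{(l)}_{y_0 y}$, and dominated convergence yields
\[
\rho^{-n} M^n_{xy} \longrightarrow \frac{h_x \tilde h_y}{\mu}.
\]

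A discrete Fubini on first-passage paths decomposed at $y_0$ (essentially Kac's lemma) then identifies the normalization: with $h_{y_0} = \tilde h_{y_0} = 1$ one finds $\tilde h h = \mu$. Positive recurrence therefore coincides with $\mu < \infty$, and renormalizing so that $\tilde h h = 1$ absorbs the factor $1/\mu$ into the limit, giving $h\tilde h$; when $\mu = \infty$ the limit is $0$. For simplicity of $\rho$, let $v \geq 0$ with $M v = \rho v$ and $v \neq 0$: irreducibility forces $v_{y_0} > 0$, iterating $v_x = \rho^{-n}(M^n v)_x \geq \rho^{-n} M^n_{x y_0} v_{y_0}$ and taking $n \to \infty$ gives $v_x \geq (v_{y_0}/\mu) h_x$ in the positive recurrent case, and the standard trick of replacing $v$ by $v - c h$ and maximizing $c$ yields $v = (v_{y_0}/\mu) h$; the null recurrent case uses instead the ratio limit $M^n_{x y_0}/M^n_{y_0 y_0} \to h_x$.

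The main technical obstacles I anticipate are the finiteness and positivity of $h_x = F_{x y_0}(\rho^{-1})$, which hinges on a Tauberian argument (using the convergence of $M^n_{x y_0}/M^n_{y_0 y_0}$ in the aperiodic recurrent setting), and the Fubini identification $\tilde h h = \mu$, which relies on absolute summability of first-passage expansions at $\rho^{-1}$ that recurrence just barely provides.
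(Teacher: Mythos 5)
The paper does not prove this proposition: it is quoted as \citet[Theorem~7.1.3]{kitchens_symbolic_1998}, going back to \citet{vere-jones_ergodic_1967}, so there is no in-paper argument to compare against. Your renewal-theoretic proof via first-passage (taboo) generating functions is in fact the standard Vere-Jones proof of this result, so the route you chose is the right one.

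Two steps are stated too loosely as written, though both are repairable. The identity $M^n_{xy}=\sum_{k,l\geq 1}{}_{y_0}M^{(k)}_{xy_0}\,M^{n-k-l}_{y_0y_0}\,{}_{y_0}M^{(l)}_{y_0y}$ is not exact: it omits the paths from $x$ to $y$ that never visit $y_0$, and it mishandles the boundary cases $x=y_0$ or $y=y_0$ (where $k=0$ or $l=0$ are needed). The clean way is to treat $M^n_{xy_0}$ and $M^n_{y_0y}$ first, where the first-visit and last-visit decompositions \emph{are} exact, and then write $M^n_{xy}=\sum_{k=1}^{n}{}_{y_0}M^{(k)}_{xy_0}M^{n-k}_{y_0y}+{}_{y_0}M^{(n)}_{xy}$; the correction term vanishes after scaling by $\rho^{-n}$ because ${}_{y_0}P_{xy}(\rho^{-1})<\infty$ (pick $m$ with ${}_{y_0}M^{(m)}_{yy_0}>0$ and concatenate to dominate ${}_{y_0}P_{xy}$ by a multiple of $F_{xy_0}(\rho^{-1})$). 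Second, the finiteness of $h_x=F_{xy_0}(\rho^{-1})$ does not really need a Tauberian argument: from $M^{n+m}_{y_0y_0}\geq M^m_{y_0x}M^n_{xy_0}$ (irreducibility) one gets $F_{xy_0}(s)=P_{xy_0}(s)/P_{y_0y_0}(s)\leq \rho^m/M^m_{y_0x}$ uniformly for $s<\rho^{-1}$, hence also at $s=\rho^{-1}$ by monotone convergence. Your Kac-type Fubini identification $\tilde h h=\mu$ is correct as sketched: summing over $y$ the number of ways to split a first-return loop of length $n$ at an intermediate visit to $y$ (including $y=y_0$ at time $0$) gives exactly $n$ per loop. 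With these two repairs, and the taboo conventions spelled out carefully, the argument is complete.
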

From now on, we work under the following assumption.
\begin{assumption}\label{ass:recurrence}
  The mean offspring matrix $M$ is irreducible, aperiodic and recurrent, with a Perron value $1<\varrho<\infty$. In the case where $M$ is positive recurrent, we will always assume that the normalization $\langle \tilde{h}, h\rangle=1$ holds.
\end{assumption}
%

\subsection*{Related works}
Kesten--Stigum-like results typically consist in finding conditions for the non-degeneracy of the a.s.\ limit $W$ of the so-called additive martingale of the process (defined as $W_n\coloneqq \varrho^{-n} \langle Z(n), h\rangle$), relating it to the extinction event $\mathrm{Ext}\coloneqq\{\exists n \geq 0 : Z(n)=0\}$ of the process, and studying the asymptotic type-distribution (\textit{i.e.} the behavior of $\varrho^{-n}Z(n)$).
For multi-type branching processes with a \textit{finite} set of types, under Assumption~\ref{ass:recurrence} and with $\langle  \tilde{h}, h \rangle=\langle \tilde{h} , 1\rangle =1$, it is well-known from the seminal work of~\cite{kesten_limit_1966} that 
\begin{equation*}
   \mathbb{E}\bigg[\sum_{x,y\in\mathcal{X}}L^{(x)}_y\log_+\big(L^{(x)}_y\big)\bigg]<\infty \ \Longleftrightarrow\  \begin{cases}
    \mathbb{E}[W]\ =\ \langle Z(0), h\rangle, \\ \mathbb{P}(W>0)\ >\ 0, \\ \{W=0\}\ =\ \mathrm{Ext},\  \text{up to a null-measure set},
   \end{cases}
\end{equation*}
the three items on the right-hand side being equivalent.
Moreover, they showed that the normalized distribution of the types in the population converges \textit{almost surely} to $W\tilde{h}$ as $n\to\infty$.
A short proof of this result is the main result of~\cite{kurtz_conceptual_1997}, building on the spinal change of measure developed by the same authors for single type branching processes~\citep{lyons_pemantle_peres_1995}.

When it comes to broader sets of types, several studies investigated the limit behavior of the large time type-population vector in the supercritical Galton--Watson process.
Limit theorems for the countable-type setting already appear in the seminal works of~\citet[Chapter~III]{harris_theory_1964} and~\citet{moy_extensions_1967}.
Both derive extinction criterion and almost sure convergence of $\varrho^{-n}Z(n)$ under strong additional assumptions.
\citet{moy_extensions_1967} works with a second moment assumption, and~\citet[Chapter~III]{harris_theory_1964} assumes the coefficients of $M$ are uniformly bounded.
Some more recent
works~\citep{loic_de_raphelis_scaling_2017,topchii_critical_2020}
focus on the critical case. They derived Yaglom-type results, that is,
convergence in distribution of the type vector conditional upon
non-extinction.

Several works have focused on $L\log L$ criteria, in the countable
setting or in more general multi-type settings, however, these works
generally require some kind of compactness assumption or uniform bound
on moments of the offspring distribution. \citet{asmussen_strong_1976}
give a necessary and sufficient $L\log L$ condition and almost sure
convergence of the types assuming a uniformly bounded mean number of
offspring over the type space (which is not assumed to be
  countable), and a uniform convergence assumption of the
renormalized first moment semigroup. Very recently,~\citet{BBC25}
manage to relax these uniformity assumptions under a slightly stronger
$L \log L$ condition. \citet{kesten_supercritical_1989} works under
polynomial bounds for the fitness (that is, the contribution of the
progeny of an individual to the demography of the population in the
long run) of an individual outside a set $A_t$ that does not grow too
quickly. \citet{biggins_multi-type_1999} tackle $L^p$ convergence of
the type distribution in a random environment, under ergodicity
conditions and uniform integrability conditions.
\citet{athreya_change_2000} and \citet{kyprianou_measure_2004} give
criteria for the uniform integrability of the additive martingale in
general type spaces, involving uniform $L \log L$ moment assumptions.
More recently~\cite{ligonniere_kesten_2024}, in the setting of varying
environment and countable types, also obtains sufficient conditions
for convergence of the type distribution, under some uniform control
of the second moments of the offspring vectors. In particular, in all
of these works the total population size is almost surely finite at
all times, but we needed to get rid of this assumption in our upcoming
work, and our present results show this is indeed perfectly viable.
Note that infinite populations in finite time appear quite naturally
in the recent theory of (growth-)fragmentation processes: see in
particular \citet[Theorem~1.1]{bertoin_strong_2020}, where the authors
study growth-fragmentation branching processes and derive ergodicity
results similar to our Theorem~\ref{thm:type_convergence}, in a quite
different setting of continuous time and type space.

Our results could find application in other contexts, such as seed bank models or delayed branching processes where countable sets of types typically appear.
Indeed, branching processes with potentially unbounded dormant populations are used to model long-term genetic storage \citep{Casanova_Kurt_Spanò_2013}. 

In a more specific setting, discretized Crump--Mode--Jagers processes studied in~\cite{jagers_general_2008} can also be interpreted as branching processes with a countable set of types.
For these specific processes independence between the progeny produced by a same individual at distinct time allows obtaining Kesten--Stigum results and limit theorems under minimal assumptions, as is well documented in the work of~\cite{nerman_convergence_1981}.

\subsection*{Main results}
We seek to generalize Kesten--Stigum uniform integrability and ergodicity results to the countable type setting under minimal assumptions (namely Assumption~\ref{ass:recurrence}).
We start from a population satisfying $\langle Z(0), h\rangle <\infty$.
Because $h$ is a right eigenvector for $M$, for every $n\geq 0$ we have 
\[
\mathbb{E}\big[\langle Z(n), h\rangle \big]\ =\ \varrho^n \langle Z(0), h\rangle\ <\ \infty,
\]
 and in particular $\langle Z(n), h\rangle<\infty$ holds almost surely.

This way, the population vector at generation $n$ can be seen as a random element of the Banach space $\mathbb{B}_h=\{v\in \mathbb{R}^{\mathcal{X}} : \norm{v}_{h} \coloneqq \sum_x \abs{v_x}h_x < \infty\}$. In the case where $M$ is positive recurrent, $\tilde{h}$ is a unit vector in $\mathbb{B}_h$. In the same spirit as the additive martingale in finite multi-type branching theory, it is not hard to see that for all $n\in \mathbb{N}$, the process  
\begin{equation*}
W_n\ \coloneqq\ \varrho^{-n}\langle Z(n), h\rangle
\end{equation*}
defines a (scalar) nonnegative martingale. We denote by $W$ its almost sure limit as $n\to\infty$. 

We will see that in the general setting of recurrence for $M$, a sufficient condition for $\mathbb{E}[W] = \langle Z(0),h\rangle$ -- or equivalently, for the uniform integrability of $(W_n)_n$ -- is
\begin{equation}\label{eq:simple_condition}\tag{A}
  \sum_{x\in \mathcal{X}} \tilde{h}_x \mathbb{E}\Big[\big\langle L^{(x)},h\big\rangle\log_+\big\langle L^{(x)},h\big\rangle\Big]\ <\ \infty.
\end{equation}
This relates to previous works, in particular to~\citet{asmussen_strong_1976} where the analog of this condition is necessary and sufficient to have $\mathbb{E}[W] = \langle Z(0),h\rangle$ in their setting.
Let us also mention \citet{athreya_change_2000}, whose type space is only assumed to be a measurable space (not a topological space).
Translating to our notation, his sufficient (but not necessary) condition for the uniform integrability of the additive martingale reads:
\[
  \int_0^{\infty} \sup_{x\in \mathcal{X}}\frac{1}{h_x} \mathbb{E}(\langle L^{(x)}, h\rangle \mathds{1}_{\langle L^{(x)}, h\rangle > \mathrm{e}^t}) \,dt\ <\ \infty.
\]
In our positive recurrent setting, note that this is stronger than~\eqref{eq:simple_condition}.
Indeed, using the fact that $(\tilde{h}_xh_x)_{x\in \mathcal{X}}$ sums up to $1$, we compute:
\begin{align*}
  \int_0^{\infty} \sup_{x\in \mathcal{X}}\frac{1}{h_x} \mathbb{E}\big[\langle L^{(x)}, h\rangle \mathds{1}_{\langle L^{(x)}, h\rangle > \mathrm{e}^t}\big] \,dt \ &\geq\ \int_0^{\infty} \sum_{x\in \mathcal{X}}\frac{\tilde{h}_xh_x}{h_x} \mathbb{E}\big[\langle L^{(x)}, h\rangle \mathds{1}_{\langle L^{(x)}, h\rangle > \mathrm{e}^t}\big] \,dt\\
  & =\ \sum_{x\in \mathcal{X}}\tilde{h}_x \mathbb{E}\Big[\langle L^{(x)}, h\rangle \log_+ \langle L^{(x)}, h\rangle \Big].
\end{align*}
Note that~\eqref{eq:simple_condition} is \emph{not necessary} in our general setting, even when $M$ is further assumed to be positive recurrent -- in this case we can express a necessary and sufficient condition that is less easily checked.

In order to state the latter condition, we need some more definitions.
Let us define the \emph{spinal chain}, that is a Markov chain $(X_n)_{n\geq 0}$ on $\mathcal{X}$ with transition probabilities
\begin{equation}\label{eq:spine}
  p_{xy}\ =\ \frac{M_{xy}h_y}{\varrho h_x}.
\end{equation}
Note that $X$ is a recurrent Markov chain when $M$ is recurrent. Denoting by $\pi$ the vector with coordinates
\begin{equation}\label{eq:stationary}
\pi_x\ \coloneqq\ \tilde{h}_xh_x,\quad x\in \mathcal{X},
\end{equation}
we have that $\pi$ is a stationary distribution for $X$, so that positive recurrence of $M$ is equivalent to positive recurrence of $X$.

Let us define the $\sigma$-algebra $\mathcal{G}=\sigma(X_n, n\geq 0)$ and on the same probability space, introduce the sequence of nonnegative random variables $(Y_n)_{n\geq 1}$ such that:
\begin{itemize}
  \item The $(Y_n)$ are independent conditional on $\mathcal{G}$.
  \item The distribution of $Y_n$ conditional on $\mathcal{G}$ and on
    $(X_{n-1}, X_{n})=(x,y)$ is the distribution of
    $\langle L^{(x)}, h\rangle$ biased by $L^{(x)}_y$ in the sense
    of~\eqref{eq:biased_rv}, taking $E=\mathbb{R}$. That is,
  \begin{equation}\label{eq:Yn_biased}
    \mathbb{E}\Big[F(Y_n) \mid \mathcal{G}\Big]\mathds{1}_{X_{n-1}=x, X_{n}=y}\ =\ \frac{1}{M_{xy}} \mathbb{E}\Big[L^{(x)}_y F\big(\langle L^{(x)},h\rangle\big)\Big] \mathds{1}_{X_{n-1}=x, X_{n}=y},
  \end{equation}
  for any nonnegative measurable function $F:\mathbb{R}\to \mathbb{R}$.
\end{itemize}
We will write $\mathbb{E}_x$ to denote expectation conditional on $\{X_0=x\}$.
The random sequence $(X_n,Y_n)$ allows us to express the sharp $L \log L$ condition in the positive recurrent case, namely:

\begin{theorem}[Uniform integrability of the additive martingale]\label{thm:ks_condition}
  Fix any state $x$, and denote by 
  \[T_1\ \coloneqq\ \inf\big\{k\geq 1,\ X_k=x \big\}
  \] 
   the first return time to $x$ for the spinal Markov chain $X$. Assume that the initial population satisfies $\langle Z(0), h\rangle<\infty$ and consider the condition:
  \begin{equation}\label{eq:nsc}\tag{B}
    \mathbb{E}_x\bigg[\log_+\Big(\sum_{k=1}^{T_1}\varrho^{-k} Y_k\Big)\bigg]\ <\ \infty.
  \end{equation}
If $M$ is positive recurrent, then~\eqref{eq:nsc} is equivalent to any of the following (equivalent) points:
  \begin{mathlist}
    \item\label{eq:point_1_ext} $\mathbb{E}[W]=\langle Z(0), h\rangle$;
    \item\label{eq:point_2_ext} $\mathbb{P}(W>0)>0$;
    \item\label{eq:point_3_ext} $\{W=0\}=\mathrm{LocExt}$ up to a null-measure set,
    where we denote by \[
    \mathrm{LocExt}\ =\ \big\{\forall x\in \mathcal{X}, \lim_{n\to\infty} Z_x(n)=0\big\}
    \] the event of local extinction.
  \end{mathlist}
  If $M$ is recurrent and Condition~\eqref{eq:simple_condition} or Condition~\eqref{eq:nsc} is satisfied, then the points~\ref{eq:point_1_ext},~\ref{eq:point_2_ext} and~\ref{eq:point_3_ext} also hold.
\end{theorem}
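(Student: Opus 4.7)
The strategy is the classical spinal change of measure adapted to the countable-type setting, along the lines of \citet{lyons_pemantle_peres_1995} and \citet{kurtz_conceptual_1997}. The first step is to construct a probability $\mathbb{Q}$ on an enlarged space carrying the genealogical tree together with a distinguished infinite line of descent (the \emph{spine}) such that, on the $\sigma$-field $\mathcal{F}_n$ of the first $n$ generations, $d\mathbb{Q}/d\mathbb{P}|_{\mathcal F_n} = W_n/(Z(0)h)$. A direct computation then shows that under $\mathbb{Q}$: the spine's type process is the Markov chain $(X_n)$ of the statement; conditional on $(X_{k-1},X_k)=(x,y)$, the offspring of the spine at step $k-1$ is distributed as $L^{(x)}$ biased by $L^{(x)}_y$, so that its $h$-weight $Y_k$ has the law of~\eqref{eqdef:Yn}; and each non-spine child independently roots a $\mathbb{P}$-distributed subtree. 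By the generic Lyons--Pemantle--Peres dichotomy for nonnegative martingales obtained from such a conjugation, $(W_n)$ is $\mathbb{P}$-uniformly integrable iff $\mathbb{Q}(W_\infty<\infty)=1$; together with a short irreducibility-based $0$--$1$ argument identifying $\{W=0\}$ with $\mathrm{LocExt}$ on the UI event, this establishes the equivalences (i)$\Leftrightarrow$(ii)$\Leftrightarrow$(iii)$\Leftrightarrow\mathbb{Q}(W_\infty<\infty)=1$.

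The heart of the proof is to characterize $\mathbb{Q}(W_\infty<\infty)=1$ in terms of the $Y_k$'s. Writing $\mathcal H$ for the $\sigma$-field generated by the spine and all its offspring vectors, the spine decomposition gives
\[
\mathbb{E}^{\mathbb{Q}}[W_\infty\mid\mathcal H]\ =\ \sum_{k\geq 1}\rho^{-k}(Y_k-h_{X_k})\,+\,\lim_{n\to\infty}\rho^{-n}h_{X_n}\,,
\]
because the siblings of the spine at step $k$ carry total $h$-mass $Y_k-h_{X_k}$ and each independently launches a $\mathbb{P}$-subtree whose additive martingale has $\mathbb{P}$-expectation equal to its own starting $h$-weight. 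The recurrence of $X$ combined with $\rho>1$ yields $\lim_n\rho^{-n}h_{X_n}=0$ $\mathbb{Q}$-a.s. Finiteness of $W_\infty$ is then equivalent to finiteness of $\sum_{k\geq 1}\rho^{-k}Y_k$: one direction uses the upper bound above, while the other uses that an anomalously large $Y_k$ produces many siblings whose independent subtrees cannot, by irreducibility and supercriticality, all have a degenerate additive martingale.

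To analyze that series, I would split the spine trajectory at the successive return times $0=T_0<T_1<T_2<\dots$ of $X$ to $x$; by the strong Markov property, the excursion contributions $\xi_j := \sum_{k=T_j+1}^{T_{j+1}}\rho^{-(k-T_j)}Y_k$ are i.i.d.\ under $\mathbb{Q}_x$ with the common distribution of the random variable in~\eqref{eq:nsc}, and $\sum_k\rho^{-k}Y_k=\sum_j\rho^{-T_j}\xi_j$. In the positive recurrent case, the LLN applied to the i.i.d.\ inter-return times gives $T_j/j\to 1/\pi_x\in(0,\infty)$ $\mathbb{Q}_x$-a.s., so $\rho^{-T_j}$ decays exactly exponentially in $j$; an elementary Borel--Cantelli argument for i.i.d.\ positive random variables weighted by such geometric factors then yields that the series converges a.s.\ iff $\mathbb{E}_x[\log_+\xi_1]<\infty$, which is exactly~\eqref{eq:nsc}.

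For the implication \eqref{eq:simple_condition}$\Rightarrow$\eqref{eq:nsc} I would use $\log_+\sum_{k=1}^{T_1}\rho^{-k}Y_k\leq \log_+ T_1+\sum_{k=1}^{T_1}\log_+Y_k$, take $\mathbb{E}_x$ and invoke Kac's occupation-time formula to rewrite $\mathbb{E}_x[\sum_{k=1}^{T_1}\log_+Y_k]=\mathbb{E}_x[T_1]\cdot\mathbb{E}_\pi[\log_+Y_1]$ with $\pi_x=\tilde h_x h_x$; a direct computation of the size-biased law identifies $\mathbb{E}_\pi[\log_+Y_1]=\rho^{-1}\sum_x\tilde h_x\mathbb{E}[(L^{(x)}h)\log_+(L^{(x)}h)]$. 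Strictness of the implication can be exhibited by offspring laws with rare catastrophic values in some types, which inflate~\eqref{eq:simple_condition} while leaving $Y_k$ controllable after the $\log_+$. In the null-recurrent case the same spine decomposition and Borel--Cantelli argument give sufficiency of both~\eqref{eq:nsc} and~\eqref{eq:simple_condition} for (i); the converse direction fails because $T_j/j\to\infty$ $\mathbb{Q}_x$-a.s.\ makes $\rho^{-T_j}$ decay super-exponentially, allowing $\sum_j\rho^{-T_j}\xi_j$ to converge even when $\mathbb{E}_x[\log_+\xi_1]=\infty$ (for instance with $\xi_j$ of the order of $\rho^{(T_{j+1}-T_j)/2}$). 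The steps I expect to be most delicate are the converse side of the spine decomposition (translating heavy-tailed $Y_k$'s into a genuinely infinite $W_\infty$ through the sibling-subtree martingales) and the control of $\rho^{-k}h_{X_k}$ in the null-recurrent regime, where no finite stationary probability is available to anchor the estimates.
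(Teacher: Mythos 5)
Your proposal follows the same route as the paper (spinal change of measure, excursion decomposition over return times, Borel--Cantelli dichotomy for i.i.d.\ $\log_+$'s), but two of your intermediate steps are circular or incomplete and must be replaced by the paper's more careful arguments. The display $\mathbb{E}^{\mathbb{Q}}[W_\infty \mid \mathcal{H}]=\sum_{k\geq 1}\rho^{-k}(Y_k-h_{X_k})+\lim_n\rho^{-n}h_{X_n}$ tacitly assumes each sibling subtree's additive martingale is UI with mean equal to its starting $h$-mass — exactly the property you are trying to establish. At best the right-hand side is an upper bound via Fatou, which covers sufficiency but gives nothing for necessity; and your fallback heuristic for necessity (anomalously large $Y_k$'s force non-degenerate sibling subtrees) is unsound, because if $(W_n)$ is not UI the sibling subtree martingales \emph{are} degenerate (they have the same offspring kernel as the original process). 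The paper avoids both pitfalls: for sufficiency, conditional on the spine and its offspring vectors, $W'_n:=\widetilde{W}_n-\rho^{-n}h(X_n)$ is a nonnegative submartingale whose conditional mean increments are $\rho^{-n}Z^{(n)}(0)h$, so a.s.\ convergence of $\sum_n\rho^{-n}Y_n$ gives $\limsup_n W'_n<\infty$ a.s.\ by a standard submartingale argument; for necessity in the positive recurrent case, the elementary inequality $\widetilde{W}_n\geq\rho^{-n}Y_n$ together with $\limsup_n\rho^{-n}Y_n=\infty$ a.s.\ when \eqref{eq:nsc} fails closes the dichotomy without any reference to the subtree limits.

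Your ``short irreducibility-based $0$--$1$ argument'' for the equivalence with \ref{eq:point_3_ext} also hides a non-trivial ingredient. One inclusion, $\mathrm{LocExt}^c\subset\{W>0\}$, is indeed a Borel--Cantelli restart argument essentially as you suggest; but the converse, $\{W>0\}\subset\mathrm{LocExt}^c$, is deduced in the paper from the $L^1(\mathbb{B}_h)$ convergence $\rho^{-n}Z(n)\to W\tilde{h}$ of Theorem~\ref{thm:type_convergence}(ii), a substantial ergodicity result relying on positive recurrence, not a $0$--$1$ law. The remaining components of your plan — the excursion identity $\sum_k\rho^{-k}Y_k=\sum_j\rho^{-T_j}\xi_j$, the LLN giving $T_j\sim j\,\mathbb{E}_x[T_1]$, the $\log_+$ Borel--Cantelli dichotomy for i.i.d.\ variables, Kac's formula and size biasing to show \eqref{eq:simple_condition} implies \eqref{eq:nsc}, and the one-sidedness in the null-recurrent case — match the paper.
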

\begin{remark}
Theorem~\ref{thm:ks_condition} states that in the positive recurrent setting, Condition~\eqref{eq:simple_condition} is sufficient to obtain uniform integrability of the sequence $(W_n)_n$, however unlike~\eqref{eq:nsc}, Condition~\eqref{eq:simple_condition} is not necessary (see Section~\ref{sec:examples} for an example).
If $M$ is null recurrent, then both conditions are sufficient.
\end{remark}
\begin{remark}[Extinction of the branching process]\label{rem:extinction}
As highlighted before, the Kesten--Stigum theorem on branching
processes with a finite set of types includes the fact that $\mathbb{P}(W>0)>0$ is equivalent to $\mathrm{Ext}=\{W=0\}$ up to a null-measure set, where $\mathrm{Ext}$ is the event of extinction. This is usually checked through the immediate inclusion $\{W=0\}\subset\mathrm{Ext}$ and the equality $\mathbb{P}(W=0)\in\{\mathbb{P}(\mathrm{Ext}),1\}$, which is derived using a fixed point argument on the generating function of the offspring distribution~\citep{harris_theory_1964}.

However, when the set of types is infinite the question is much more involved. As a matter of fact, the number of fixed points of the offspring generating function is not 2 but is at least countably infinite, as shown in~\cite{tan_characterization_2023}.
In fact, every type could become extinct eventually while the whole population does not, and may even grow exponentially. This phenomenon of local extinction has been studied in many recent works~\citep{braunsteins_pathwise_2019,ligonniere_kesten_2024, kyprianou_extinction_2018}. Examples of such processes can be found in~\cite{braunsteins_pathwise_2019}, and we give in Section~\ref{sec:examples} an instance where even under positive recurrence of the mean matrix we have $\mathbb{P}(W>0)>0$ yet $\mathbb{P}\big(\mathrm{Ext}^c\cap\{W=0\}\big)>0$. 
Nevertheless, in our countable-type setting there is a dichotomy between the events $\{W>0\}$ and local extinction of the process.
\end{remark}

\begin{remark}\label{rem:simple_implies_nsc}
  Using the fact that $\log_+(\sum_i a_i) \leq \sum_i \log(1+a_i)$ for any sequence of nonnegative numbers $(a_i)_i$, we get
  \[
    \mathbb{E}_x\bigg[\log_+\Big(\sum_{k=1}^{T_1}\varrho^{-k} Y_k\Big)\bigg]\ \leq\ \mathbb{E}_x\bigg[\log_+\Big(\sum_{k=1}^{T_1} Y_k\Big)\bigg]\ \leq\ \mathbb{E}_x\bigg[\sum_{k=1}^{T_1}\log(1+Y_k)\bigg],
  \]
  and recalling from~\eqref{eq:stationary} the stationary distribution $\pi$ of $X$, the last term above can be rewritten
  \begin{align*}
    \frac{1}{\pi(x)} \sum_{y\in \mathcal{X}}\pi(y)\mathbb{E}_y\big[\log(1+Y_1)\big]
   \ &=\ \frac{1}{\pi(x)} \sum_{y,z\in \mathcal{X}}\pi(y) \frac{M_{yz}h_z}{\varrho h_y} \cdot \frac{1}{M_{yz}}\mathbb{E}\big[L^{(y)}_z \log\big(1+\langle L^{(y)},h \rangle\big)\Big]\\
   &=\ \frac{1}{\pi(x)} \sum_{y\in \mathcal{X}}\tilde{h}_y \mathbb{E}\Big[\langle L^{(y)},h \rangle \log\big(1+\langle L^{(y)},h \rangle\big)\Big],
  \end{align*}
  which, in the positive recurrent case, is finite only if~\eqref{eq:simple_condition} holds, showing that~\eqref{eq:simple_condition} is stronger than~\eqref{eq:nsc}.
  In the null recurrent case it is not clear with this argument, but we will prove directly that~\eqref{eq:simple_condition} is sufficient for the uniform integrability of $(W_n)$.
\end{remark}

We now turn to the large population asymptotic of the type distribution $Z(n)/\varrho^n$. We recall that we work on the Banach space
\[
  \mathbb{B}_h=\Big\{v\in \mathbb{R}^{\mathcal{X}} : \norm{v}_{h}
    \coloneqq \sum_x \abs{v_x}h_x < \infty \Big\},
\]
and denote by $L^1(\Omega,\mathcal{A},\mathbb{P},\mathbb{B}_h)$ the subspace of random variables $V:\Omega\to \mathbb{B}_h$ such that $\mathbb{E}[\norm{V}_h]<\infty$. For reasons of brevity, we will use the notation of~\citet[Chapter~1]{pisier_martingales_2016} and write  $L^1(\mathbb{B}_h)$ instead of $L^1(\Omega,\mathcal{A},\mathbb{P},\mathbb{B}_h)$.

\begin{theorem}[Convergence of the type distribution]\label{thm:type_convergence}\hfill
  \begin{mathlist}
    \item In the null recurrent case, for all $x\in \mathcal{X}$,  $\varrho^{-n}\mathbb{E}\big[Z_x(n)\big]\to 0$ as $n\to\infty$.
    \item In the positive recurrent case, if $\mathbb{P}(W>0)>0$ then
    \begin{equation*}
      \mathbb{E}\Big[\norm[\big]{\varrho^{-n} Z(n) - W\tilde{h}}_h\Big]\ =\ \sum_{x\in \mathcal{X}} \mathbb{E}\Big[\abs[\big]{\varrho^{-n}Z_x(n) - W\tilde{h}_x} \Big] h_x\ \underset{n\to\infty}{\longrightarrow} \ 0,
    \end{equation*}
    in other words, $\varrho^{-n}Z(n)\to  W\tilde{h}$ in $L^1(\mathbb{B}_h)$, and so in probability for the topology of the norm $\norm{\cdot}_h$.

If $\mathbb{P}(W>0)=0$ then $\lim_{n}\norm{\varrho^{-n}Z(n)}_h = \lim_n W_n = 0$ almost surely,  however $\varrho^{-n}Z(n)\to 0$ does not hold in $L^1(\mathbb{B}_h)$.
    \item In the positive recurrent case and if  $\mathbb{P}(W>0)>0$, then the convergence $\varrho^{-n}Z(n)\to  W\tilde{h}$ also holds almost surely in $\mathbb{B}_h$ -- \textit{i.e.} the limit
    \begin{equation}\label{eq:type-CV-as}
     \lim_{n\to\infty} \norm[\big]{\varrho^{-n} Z(n) - W\tilde{h}}_h \ =\ 0
    \end{equation}
    holds almost surely -- if one of the following conditions is satisfied:
    \begin{enumerate}[(a)]
      \item  $\displaystyle\sum_{x,y\in \mathcal{X}} \tilde{h}_x\mathrm{Var}(L^{(x)}_y)\tilde{h}_y^{-2} < \infty$; \label{ass:variance_condition}
      \item $\displaystyle \sum_{x,y\in \mathcal{X}} \tilde{h}_x \mathbb{E}\Big[L^{(x)}_y\log_+\big( L^{(x)}_y h_y^{-1}\tilde{h}_y^{-2} \big) \Big] h_y\ <\ \infty$.
      \label{ass:weaker_entropy_condition}
    \end{enumerate}
    Furthermore, (iii)-\ref{ass:weaker_entropy_condition} holds as soon as Condition~\eqref{eq:simple_condition} holds and the distribution $\pi= (\tilde{h}_x h_x)_x$ has finite entropy, \textit{i.e.} $-\sum_{x\in \mathcal{X}} \tilde{h}_xh_x \log\big( \tilde{h}_xh_x\big)
    <\infty$.
  \end{mathlist}
\end{theorem}

\begin{remark}\label{rmq:moy}
  Condition~(iii)-\ref{ass:variance_condition} is reminiscent of the
  condition that can be read
  in~\citet[Theorem~1]{moy_extensions_1967}. Translated to our
  notation, Moy's condition reads
  $\sum_x \tilde{h}_x \mathbb{E}[\langle L^{(x)},h\rangle^2]$, and she
  shows almost sure and $L^2$ convergence of the type distribution in
  the positive recurrent case. Note that
  Condition~(iii)-\ref{ass:weaker_entropy_condition} is easy to check
  in the case where~\eqref{eq:simple_condition} holds. However, we did
  not manage to show almost sure convergence of $\rho^{-n}Z(n)$ in all
  generality and the finite entropy condition appears, somewhat
  surprisingly, from our method of proof. Let us stress that it
    may well be that~\eqref{eq:type-CV-as} holds under the sole
    assumption $\mathbb{P}(W>0)$ in the positive recurrent case, but
    we did not manage to prove this.
\end{remark}

\begin{remark}\label{rmq:nummelin-tweedie}
  The notion of irreducibility, $R$-transience and $R$-(positive) recurrence of countable matrices also extend to kernels on countably generated measurable spaces -- we refer to \citet{Num84} for a general account.
  Perron--Frobenius theory also extends to this very general context, see \cite{Twe74}, and we would expect statement analogous to our
  main results to hold more generally, without relying on topological
  arguments -- for instance \cite{athreya_change_2000} obtains in
  his topology-free setting an analogue of Theorem~\ref{thm:ks_condition}.
  However, the techniques we use here are not easily adapted to general type spaces under minimal assumptions, since our proofs crucially rely on approximating the type space $\mathcal{X}$ by a finite set (for instance in Lemma~\ref{lem:restriction_finite} below, which is key to proving Theorem~\ref{thm:type_convergence}-(ii)).
\end{remark}

\section{Examples and counter-examples}\label{sec:examples}

A convenient way to build some examples of countable-type Galton--Watson processes with specific properties consists in first fixing a spinal Markov chain, then building the rest of the branching process around this.
Specifically, let us consider an irreducible, aperiodic, recurrent Markov chain $(X_n)$ with values on some countable space $\mathcal{X}$, with transition matrix denoted by $(p_{xy})$ and a stationary distribution $(\pi_x)$, assumed to be a probability distribution in the positive recurrent case.
Fix any $\varrho>1$ and any entry-wise positive $h=(h_x) \in \mathbb{R}^{\mathcal{X}}$, and define the matrix $M$ and the vector $\tilde{h}\in \mathbb{R}^{\mathcal{X}}$ via
\begin{equation}\label{eq:construction_ex}
  M_{xy}\ =\ \varrho\frac{p_{xy}h_x}{h_y},\quad \text{and}\quad \tilde{h}_x\ =\ \frac{\pi_x}{h_x}.
\end{equation}
Then by construction $M$ is a matrix with nonnegative entries and satisfies $Mh=\varrho h$ and $\tilde{h}^\transpose M=\varrho \tilde{h}^\transpose$.
Also, it is not hard to see that for all $x,y\in \mathcal{X}$ and $n\geq 1$, we have
\[
  \varrho^{-n}M^n_{xy}\ =\ \frac{h_x}{h_y}\mathbb{P}_x(X_n=y)\ \underset{n\to \infty}{\longrightarrow}\  \begin{cases}
    h_x\tilde{h}_y & \text{if }X\text{ is positive recurrent,}\\
    0 & \text{otherwise,}
  \end{cases}
\]
in other words, $\varrho, h$ and $\tilde{h}$ are exactly as in Assumption~\ref{ass:recurrence} for the matrix $M$, and $X$ is the associated spinal chain.
Now it remains to choose, for all $x\in \mathcal{X}$, any random vector $L^{(x)}=(L^{(x)}_y)_{y\in \mathcal{X}}$ to fix our type-$x$ offspring distribution, and as long as $\mathbb{E}[L^{(x)}_y]=M_{xy}$, we have built a countable-type Galton--Watson process with a given spinal chain.
Note that there is large freedom in the last step which consists in
fixing the offspring distribution, as everything that comes before
only depends on the mean reproduction matrix $M$. For instance,
  we can choose the $(L^{(x)}_y)_y$ to be Poisson random variables
  (independent or not) and tune the correlations as we see fit.
Using this construction, we now illustrate some noteworthy facts about our results.

\subsection*{Condition~\eqref{eq:simple_condition} may fail while~\eqref{eq:nsc} holds, even in the positive recurrent case}
Looking at Remark~\ref{rem:simple_implies_nsc}, it seems clear that~\eqref{eq:nsc} is weaker than~\eqref{eq:simple_condition}. Indeed, the two conditions are not equivalent, and we provide hereafter a simple example Condition~\eqref{eq:simple_condition} fail yet Condition~\eqref{eq:nsc} holds and so the additive martingale is uniformly integrable.

Consider a positive probability vector $(q_n)_{n\in \mathbb{N}}$ and define $X$ a Markov chain on $\mathcal{X}\coloneqq\{0\}\cup \{(n,m)\in \mathbb{N}^2 : 1\leq m \leq n\}$ with the following transition probabilities:
\begin{align*}
  p_{0,(n,1)}\ =\ q_n, \quad \text{and}\quad  p_{(n,n),0} &\ =\ 1 \qquad \forall n\geq 1,\\
  \text{and}\quad p_{(n,m),(n,m+1)} &\ =\ 1 \qquad  \forall n > m.
\end{align*}

We assume $\sum_n nq_n<\infty$, so that (considering the return time at $0$) $X$ is positive recurrent. Besides, we choose $(q_n)$ such that $\sum_{n} n^2q_n\ =\ \infty.$
It is easily seen that the  distribution $\pi$ such that
\[
 \pi_0\ =\ 1\quad\text{and}\quad  \pi_{(n,m)}\ =\ q_n \quad  \forall\ 1\leq  m\leq n,
\]
 defines a stationary distribution for the Markov chain.
We define $h_0 = 1$, and for any $1\leq m\leq n$, we now define $h_{(n,m)}$   to be equal to $\varrho^m$, so that by~\eqref{eq:construction_ex}, the left eigenvector is given by  $\tilde{h}_{(n,m)}=\varrho^{-m}q_n$.
Choosing any $\varrho>1$, we see that~\eqref{eq:construction_ex} implies that for $1\leq m < n$,
\[M_{(n,m),(n,m+1)}\ =\ 1,\qquad M_{0,(n,1)}\ =\  q_n,\qquad M_{(n,n),0}\ =\ \varrho^{n+1}.
\]
The above construction allows us to take $L^{(n,m)}_{(n,m+1)}$ to be Poisson distributed with mean $1$ (let us write $V$ for such a random variable).
This way, the sum in~\eqref{eq:simple_condition} is greater than
\[
 \sum_{1\leq m < n}  \varrho^{-m} q_n \varrho^{m+1} \mathbb{E}\Big[V\log_+ \big(\varrho^{m+1}V\big)\Big]\ =\  \varrho \sum_{1\leq m < n} q_n \mathbb{E}\Big[\log_+\big(\varrho^{m+1}(1+ V)\big)\Big].
\]
In turn, the latter is lower-bounded by 
\[
  \varrho \sum_{1\leq m < n} q_n (m+1) \log \varrho \ \geq \ \varrho\log \varrho \Big(\frac{1}{2} \sum_{n\geq 1} q_n n^2 - 1\Big)
\]
which is infinite by assumption. We still need to choose some other
distributions for $L^{(0)}$ and $L^{(n,n)}$ to define completely the
branching process; we choose them in a very simple way (for instance
vectors of independent Poisson random variables with parameter given
by the mean matrix). Let us now show that~\eqref{eq:nsc} holds
  in this example. We take $0$ as the starting point. For $n\geq 1$,
  with probability $q_n$ we have $X_1=(n,1)$ and conditional on this
  event, the variable $Y_1$ defined in~\eqref{eq:Yn_biased} has
  expectation:
  \[
    \mathbb{E}\big[ Y_1 \mid X_1 = (n,1) \big]\ =\
    \frac{1}{\mathbb{E}\big[L^{(0)}_{(n,1)}\big]}
    \mathbb{E}\Big[L^{(0)}_{(n,1)}\sum_{n'\geq 1}L^{(0)}_{(n',1)}
    h_{(n,1)} \Big]\ =\ 2 \rho\ \leq\ 2\rho^2,
  \]
  where we used the fact that $(L^{(0)}_{(n,1)})_{n\geq 1}$ are
  independent Poisson random variables with respective means
  $(q_n)_{n\geq 1}$. A similar calculation shows that for $2\leq k < n$,
\[
  \mathbb{E}\big[Y_k \mid X_1 = (n,1) \big]\ =\ \mathbb{E}\big[ \rho^{k+1}
  V^2 \big]\ =\ 2\rho^{k+1},
\]
where $V$ denotes a mean-one Poisson random variable. Finally, we can
similarly compute
\[
  \mathbb{E}\big[Y_n \mid X_1 = (n,1) \big]\ =\ 1+\rho^{n+1}\ \leq\ 2\rho^{n+1},
\]
so putting everything together in~\eqref{eq:nsc} and using the
assumption that $\sum_n nq_n<\infty$, we get
\begin{align*}
  \sum_{n\geq 1}q_n \mathbb{E}\Big[\log_+\Big( \sum_{k=1}^{n}\rho^{-k}
  Y_k \Big) \; \Big|\; X_1=(n,1) \Big]\
  & \leq\ \sum_{n\geq 1}q_n \sum_{k=1}^{n}\rho^{-k} \mathbb{E}\big[
    Y_k \; \big|\; X_1=(n,1) \big] \\
  &\leq\ \sum_{n\geq 1}q_n
    \sum_{k=1}^{n} \rho^{-k} \cdot (2 \rho^{k+1}) \ <\ \infty.
\end{align*}
Therefore, we see that there exist examples in
which~\eqref{eq:nsc} holds while~\eqref{eq:simple_condition} doesn't.

Note that there does not seem to be a reason for~\eqref{eq:nsc} to be necessary in the null-recurrent case to obtain the uniform integrability of $(W_n)$, but we did not find a counter-example for this situation.

\subsection*{Extinction is not characterized by $W=0$, even in the positive recurrent case}

As pointed out in Remark~\ref{rem:extinction}, we do not have $\big(\mathbb{P}(W>0)>0\big)\Rightarrow \big(\{W>0\}=\mathrm{Ext}^c\big)$ up to a null-measure set, as for Galton--Watson processes with a finite set of types.
This paragraph gives a counter-example in the positive recurrent supercritical framework.
This example already appears in~\citet[Section~4.1]{braunsteins_pathwise_2019}, but for the sake of completeness we still use our spinal chain construction to describe it.

Take $\varrho=4/3$ and let $X$ be the Markov chain with values in $\mathbb{N}$ whose transitions probabilities are given by $p_{n,n+1}=p_{n,0}=1/2$, for every $n\in \mathbb{N}$. $X$ is a positive recurrent Markov chain, $(\pi_n)=(2^{-(n+1)})$ being its stationary probability distribution.
Define the vector $h$ as $h_n=(2/3)^n$, such that $\tilde{h}_n=(3/4)^{n}/2$ and $M_{n,n+1}=1$, $M_{n,0}=2\cdot(2/3)^{n+1}$. We can now take the following offspring distributions:
\begin{align*}
 &L^{(n)}\ =\ e_{n+1} + \mathrm{Poi}\big(2\cdot (2/3)^{n+1}\big)e_0\qquad \forall n\geq 1, \\
 &L^{(0)}\ =\ \mathrm{Poi}(1)e_1 + \mathrm{Poi}(4/3)e_0\qquad n=0,
\end{align*} 
where the Poisson random variables are independent.

It is easy to check that~\eqref{eq:simple_condition} holds, so that $\mathbb{P}(W>0)>0$.
Theorem~\ref{thm:ks_condition} then implies that up to a null-measure set, we have
\[
  \mathrm{Ext}\ \subset\ \mathrm{LocExt}\ =\ \{W=0\},
\]
and we will see that in this example the first inclusion is not an equality.
Indeed, the event of seeing a type-$1$ offspring in the first generation whose descendants (note that the population descending from it survives forever, with exactly one descendant whose type goes to $\infty$) never produce any type-$0$ individuals is included in $\{W=0\}\cap\mathrm{Ext}^c$, so we have
\begin{align*}
  \mathbb{P}\big(\{W=0\}\cap \mathrm{Ext}^c\big)\ &\geq\ \mathbb{P}\big(L^{(0)}_1=1\big) \prod_{n\geq 0} \mathbb{P}\big( L^{(n)}_{0}=0\big)\\
  &=\ \exp\Big(-1 - 2\sum_{n\geq 0} \big(2/3\big)^{n+1}\Big)\\
  &=\ \mathrm{e}^{-5}\ >\ 0.
\end{align*}

\section{Proof of Theorem~\ref{thm:ks_condition}}
\label{sec:proof_ks_results}

Theorem~\ref{thm:ks_condition} puts together several classical ideas, the first of which is the so-called spinal change of measure.
In order to state this change of measure, up to enlarging our probability space, let us assume that our branching process keeps track of the whole genealogy of individuals.
Specifically, it will be convenient to use a Neveu--Ulam--Harris notation and identify individuals with elements of
\[
\mathcal{N}\ \coloneqq\ \bigcup_{n\geq 1}(\mathcal{X}\times\mathbb{N})^{n},
\] 
the set of finite sequences of elements of $\mathcal{X}\times\mathbb{N}$.
Individuals of type $x$ in generation 0 are labeled $(x,1),(x,2),\dots,(x,Z_x(0))$; the type-$y$ children of the particle labeled $(x,i)$ are then labeled $((x,i),(y,1)),((x,i),(y,2)),\ldots$, and so on.

With these labeling rules, we may view our branching process as a random forest of genealogical typed trees (each tree rooted at an individual in generation 0).
Let us write $\mathcal{T}_{ n}$ for the genealogy of our branching process consisting of all individuals up to generation $n$.
This genealogy $\mathcal{T}_{ n}$ can formally be seen as a random subset of $\mathcal{N}$, \textit{i.e.} a random variable with values in $\mathscr{T}\coloneqq\{0,1\}^{\mathcal{N}}$ endowed with the product $\sigma$-algebra -- a value of 1 at $v\in \mathcal{N}$ indicating the presence of individual $v$ in the genealogy, which we will write $v\in \mathcal{T}_{ n}$.

To build $\mathcal{T}_{ n}$ rigorously, first let $(\hat{L}^{v})_{v\in \mathcal{N}}$ be independent random variables, where the progeny $\hat{L}^{v}\overset{d}{=}L^{(x)}$ if $v$ is of type $x$ -- in what follows, we will use $x^v$ to denote the type of $v$.
Then, we define $\mathcal{T}_{ n}$ as the set of individuals $v\in \mathcal{N}$ satisfying that $v=((x_0,i_0),\ldots,(x_m,i_m))$ for some $m\leq n$, and writing $v_k=((x_0,i_0),\ldots,(x_k,i_k))$ for the ancestor of $v$ at generation $k$:
\[
  i_0\ \leq\ Z_{x_0}(0) \quad \text{and} \quad i_{k} \ \leq\ \hat{L}^{v_{k-1}}_{x_k}, \qquad 1\leq k \leq m,
\]
where we recall that for all $x\in\mathcal{X},\ \hat{L}^{v_{k-1}}_{x}$ is the number of children of $v_{k-1}$ bearing type $x$. 
With this construction, we can define $Z(n)\in \mathbb{N}^{\mathcal{X}}$ as the count of individuals of each type at generation $n$. That is, $Z_x(n)$ is the cardinal of the set of particles with type $x$ in $
\mathcal{T}_{ n}\setminus \mathcal{T}_{ n-1}$, for all $x\in \mathcal{X}$.
Thus defined, $(Z(n))_n$ is indeed the countable-type Galton--Watson process associated with the offspring distributions $(L^{(x)})_{x\in \mathcal{X}}$.

The spinal change of measure relies on the next lemma, which deals with random genealogies \emph{with a marked individual} -- note that a marked forest can be viewed as a random variable $(\mathcal{T},v)$ with values in $\mathscr{T}^*\coloneqq\mathscr{T}\times \mathcal{N}$. 
Before stating the result, let us recursively build the ``Kesten tree'' (which is technically a forest here) associated with our branching process. We emphasize that this construction relies heavily on that done in~\cite{lyons_pemantle_peres_1995} in the scalar setting.
\begin{itemize}
  \item Define $(\mathcal{T}^{*}_0, v_0)$ as the population at generation $0$ (so $\mathcal{T}^{*}_0$ is deterministic and equal to the $\mathcal{T}_0$ defined above) with a marked vertex $v_0$. This marked particle is chosen at random 
  such that for all $u\in \mathcal{T}_0$, $\mathbb{P}(v_0=u)$ is proportional to $h_{x^u}$.
 
 \item Choose a random type $X_{1}\in \mathcal{X}$ by taking a step of the spinal Markov chain $X$ with transition kernel given in~\eqref{eq:spine} and starting from $X_0=x^{v_0}$.
  \item For all $n\geq 0$, conditional on $(\mathcal{T}^{*}_{ n}, v_n)$, we build $(\mathcal{T}^{*}_{ n+1}, v_{n+1})$ as follows (each step is independent of everything else):
  \begin{itemize}

    \item Let $(\widetilde{L}^{n+1}, X_{n+1})$ be a random vector and type with the biased distribution 
      \[
  \mathbb{P}\Big(\big(\widetilde{L}^{n+1}, X_{n+1}\big)\in \cdot\ \vert\   X_{n}=x \Big)\ =\ \frac{1}{\varrho} \sum_{y\in \mathcal{X}}\frac{h_y}{ h_x}\mathbb{E}\Big[L^{(x)}_{y}\, \mathds{1}\big((L^{(x)},y)\in \cdot \big) \Big].
  \]
  
    \item $\mathcal{T}^{*}_{ n+1}$ is build from $\mathcal{T}^{*}_{
        n}$ by letting $v_n$ reproduce according to
      $\widetilde{L}^{n+1}$ (\textit{i.e.} $v_n$ has $\widetilde{L}^{n+1}_x$ children of type $x$), and all other individuals $v$ at generation $n$ reproduce independently according to a copy of $L^{(x^v)}$.
    
    \item Finally, $v_{n+1}$ is chosen uniformly among the $\widetilde{L}^{n+1}_{X_{n+1}}\geq 1$ type-$X_{n+1}$ children of $v_n$.
  \end{itemize}
\end{itemize}

\begin{lem}\label{lem:spinal-change-of-measure}
  Let $(\widetilde{\mathcal{T}}_{ n}, \tilde{v}_n)$ be the random marked forest whose distribution is given by
  \[
    \mathbb{E}\big[F(\widetilde{\mathcal{T}}_{ n},\tilde{v}_n)\big]\ =\ \mathbb{E}\bigg[\frac{1}{\varrho^n \langle Z(0),h\rangle}\sum_{v\in \mathcal{T}_{n}\setminus \mathcal{T}_{n-1}} h_{x^v}F(\mathcal{T}_{ n},v)\bigg],
  \]
  for any measurable and nonnegative function $F:\mathscr{T}^*\to \mathbb{R}$, and we recall $\mathcal{T}_{n}\setminus \mathcal{T}_{n-1}$ denotes the vertices of $\mathcal{T}_{n}$ living at generation exactly~$n$.
  Then $(\widetilde{\mathcal{T}}_{ n}, \tilde{v}_n)$ has the distribution of $(\mathcal{T}^*_{ n}, v_n)$.
\end{lem}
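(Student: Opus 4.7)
My approach is induction on $n$, keeping the test function $F$ arbitrary (nonnegative and measurable). For $n=0$, the identity is immediate: $\mathcal{T}_0$ is deterministic and $v_0$ is drawn from $G_0$ with probability proportional to $h(x^{v_0})$, so both sides equal $(Z(0)h)^{-1}\sum_{v\in G_0} h(x^v)\, F(\mathcal{T}_0, v)$.

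For the inductive step, assume the identity at level $n$. I would condition on $\mathcal{T}_n$ and split the sum over $v \in G_{n+1}$ according to its parent $u \in G_n$. Fix such a $u$ of type $x = x^u$, and let $\mathcal{F}_{-u}$ denote the $\sigma$-field generated by the reproductions $(\hat L^{u'})_{u'\in G_n,\, u'\neq u}$. Since $\hat L^u$ is independent of $\mathcal{F}_{-u}$ and distributed as $L^{(x)}$, the standard size-biasing identity
\[
\mathbb{E}\bigg[\sum_{i=1}^{L^{(x)}_y} G_i(L^{(x)})\bigg] = M_{xy}\, \mathbb{E}\big[G_{I_y}(\tilde L^{(x,y)})\big],
\]
where $\tilde L^{(x,y)}$ denotes $L^{(x)}$ biased by its $y$-th coordinate and $I_y$ is uniform on $\{1,\dots,\tilde L^{(x,y)}_y\}$, lets me compute the contribution of $u$'s children explicitly. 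Factoring out $\rho\, h(x^u) = \sum_y M_{x^u, y}\, h(y)$ so as to recover the spinal transition weights $p_{x^u, y} = M_{x^u,y}\, h(y)/(\rho\, h(x^u))$, the outcome is
\[
\mathbb{E}\bigg[\sum_{v:\,\mathrm{parent}(v)=u} h(x^v)\, F(\mathcal{T}_{n+1}, v) \,\bigg|\, \mathcal{T}_n,\, \mathcal{F}_{-u}\bigg] = \rho\, h(x^u)\, \Psi(\mathcal{T}_n, u, \mathcal{F}_{-u}),
\]
where $\Psi(\mathcal{T}_n, u, \mathcal{F}_{-u})$ is precisely the one-step Kesten-construction expectation of $F(\mathcal{T}^*_{n+1}, v_{n+1})$ conditionally on $\mathcal{T}^*_n = \mathcal{T}_n$, $v_n = u$, and the other reproductions frozen at their $\mathcal{F}_{-u}$-values.

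Averaging over $\mathcal{F}_{-u}$ and then summing over $u \in G_n$, I obtain
\[
\mathbb{E}\bigg[\frac{1}{\rho^{n+1} Z(0)h}\sum_{v \in G_{n+1}} h(x^v)\, F(\mathcal{T}_{n+1}, v)\bigg] = \mathbb{E}\bigg[\frac{1}{\rho^n Z(0)h}\sum_{u \in G_n} h(x^u)\, \Phi(\mathcal{T}_n, u)\bigg],
\]
with $\Phi(\mathcal{T}_n, u) := \mathbb{E}[F(\mathcal{T}^*_{n+1}, v_{n+1}) \mid \mathcal{T}^*_n = \mathcal{T}_n,\, v_n = u]$. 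Applying the induction hypothesis to the test function $\Phi$, followed by the tower property, yields $\mathbb{E}[\Phi(\mathcal{T}^*_n, v_n)] = \mathbb{E}[F(\mathcal{T}^*_{n+1}, v_{n+1})]$, closing the induction.

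The main subtlety I anticipate is the careful bookkeeping of conditionings: $F(\mathcal{T}_{n+1}, v)$ depends on the entire generation-$(n+1)$ genealogy, hence on every reproduction at generation $n$, not just that of $v$'s parent. The calculation separates cleanly only because $\hat L^u$ is independent of $\mathcal{F}_{-u}$, which lets me isolate $u$'s offspring before applying the size-biasing identity. Interchanges of expectations with the (possibly infinite) sums over children and over $G_n$ are justified throughout by Fubini--Tonelli, since all quantities are nonnegative.
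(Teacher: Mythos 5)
Your proof is correct and takes essentially the same route as the paper: induction on $n$, decomposition of the generation-$(n+1)$ sum by parent $u\in G_n$, an application of the size-biasing identity to $\hat L^u$ so as to expose the spinal transition weight $p_{x^u y}=M_{x^u y}h(y)/(\rho h(x^u))$ and the $L^{(x^u)}_y$-biased offspring with a uniformly chosen child, and finally an application of the induction hypothesis to the one-step Kesten-transition kernel applied to $F$. The only cosmetic difference is that you make the conditioning on the sibling reproductions $\mathcal{F}_{-u}$ explicit before averaging it out, whereas the paper performs the equivalent algebraic rewrite inside a single expectation and identifies the result with $\mathbb{E}[F(\mathcal{T}^*_{n+1},v_{n+1})]$ directly.
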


\begin{proof}
  We prove this by induction.
  The equality in distribution is clear for $n=0$ by construction.
  We now assume it holds for some $n\geq 0$, and fix a measurable, nonnegative function $F$.
  For any individual $u\in \mathcal{T}_{n+1}$, we will denote by $\mathrm{Ch}(u)$ (resp.\ $\mathrm{Ch}_y(u)$) the set of children (resp.\ type-$y$ children) of $u$, and recall that $L^u_y$ is the cardinal of $\mathrm{Ch}_y(u)$.
  We compute:
  \begin{align*}
     \mathbb{E}\bigg[&\frac{1}{\varrho^{n+1} \langle Z(0), h\rangle}\sum_{v\in \mathcal{T}_{n+1}\setminus\mathcal{T}_n} h_{x^v}F(\mathcal{T}_{ n+1},v)\bigg]\\
     &\quad =\ \mathbb{E}\bigg[\frac{1}{\varrho^{n} \langle Z(0), h\rangle}\sum_{u\in \mathcal{T}_{n}\setminus \mathcal{T}_{n-1}} \frac{1}{\varrho} \sum_{v\in \mathrm{Ch}(u)} h_{x^v}F(\mathcal{T}_{ n+1},v)\bigg]\\
    &\quad =\ \mathbb{E}\bigg[\frac{1}{\varrho^{n}\langle Z(0), h\rangle}\sum_{u\in \mathcal{T}_{n}\setminus \mathcal{T}_{n-1}} h_{x^u} \sum_{y\in \mathcal{X}} \underbrace{ \frac{M_{x^uy}h_y}{\varrho h_{x^u}} }_{=p_{x^uy}} \cdot \frac{L^u_y}{M_{x^uy}} \cdot \frac{1}{L^u_y} \sum_{v\in \mathrm{Ch}_y(u)} F(\mathcal{T}_{ n+1},v)\bigg].
  \end{align*}
  It suffices to note that by the induction hypothesis and the recursive construction of $(\mathcal{T}^*_{ n+1},v_{n+1})$ from $(\mathcal{T}^*_{ n},v_{n})$, the last expression above is exactly $\mathbb{E}\big[F(\mathcal{T}^*_{ n+1}, v_{n+1})\big]$, which concludes the proof.
\end{proof}

This result can now be translated in terms of the martingale $(W_n)_{n\geq 0}$.
Here and in the following, we let $(\mathcal{F}_n)$ denote the natural filtration associated to $(Z(n))$.
By Kolmogorov's extension theorem, there exists a probability measure $\mathbb{Q}$ on $\mathcal{F}_\infty = \sigma(\bigcup_n \mathcal{F}_n)$ satisfying that (writing respectively $\mathbb{P}_n$ and $\mathbb{Q}_n$ for the restriction of $\mathbb{P}$ and $\mathbb{Q}$ to $\mathcal{F}_n$) $\mathbb{Q}_n \ll \mathbb{P}_n$, with Radon--Nikodym derivative proportional to $W_n=\varrho^{-n}\langle Z(n), h\rangle$.
The fact that $\langle Z(n), h\rangle=\sum_{v\in \mathcal{T}_n} h_{x^v}$ allows us to describe easily the distribution of $(W_n)$ under $\mathbb{Q}$ (in the upcoming Corollary~\ref{cor:law-of-W-under-Q}).
Note that we will never need to use expectation under $\mathbb{Q}$, so that $\mathbb{E}$ always denotes expectation under $\mathbb{P}$.

\begin{figure}
 \centering
\resizebox{0.83\textwidth}{!}{

\begin{tikzpicture}[x=0.75pt,y=0.75pt,yscale=-1,xscale=1]

\draw [line width=1.5]    (10.99,6.44) -- (9.01,349.44) ;
\draw [shift={(11.01,2.44)}, rotate = 90.33] [fill={rgb, 255:red, 0; green, 0; blue, 0 }  ][line width=0.08]  [draw opacity=0] (11.61,-5.58) -- (0,0) -- (11.61,5.58) -- cycle    ;
\draw [line width=1.5]    (0.74,349.35) -- (20,349.73) ;
\draw [line width=1.5]    (0.74,240.35) -- (20,240.73) ;
\draw [line width=1.5]    (0.74,129.35) -- (20,129.73) ;
\draw [line width=1.5]    (0.74,39.35) -- (20,39.73) ;
\draw (399,191.06) node  {\includegraphics[width=387pt,height=267pt]{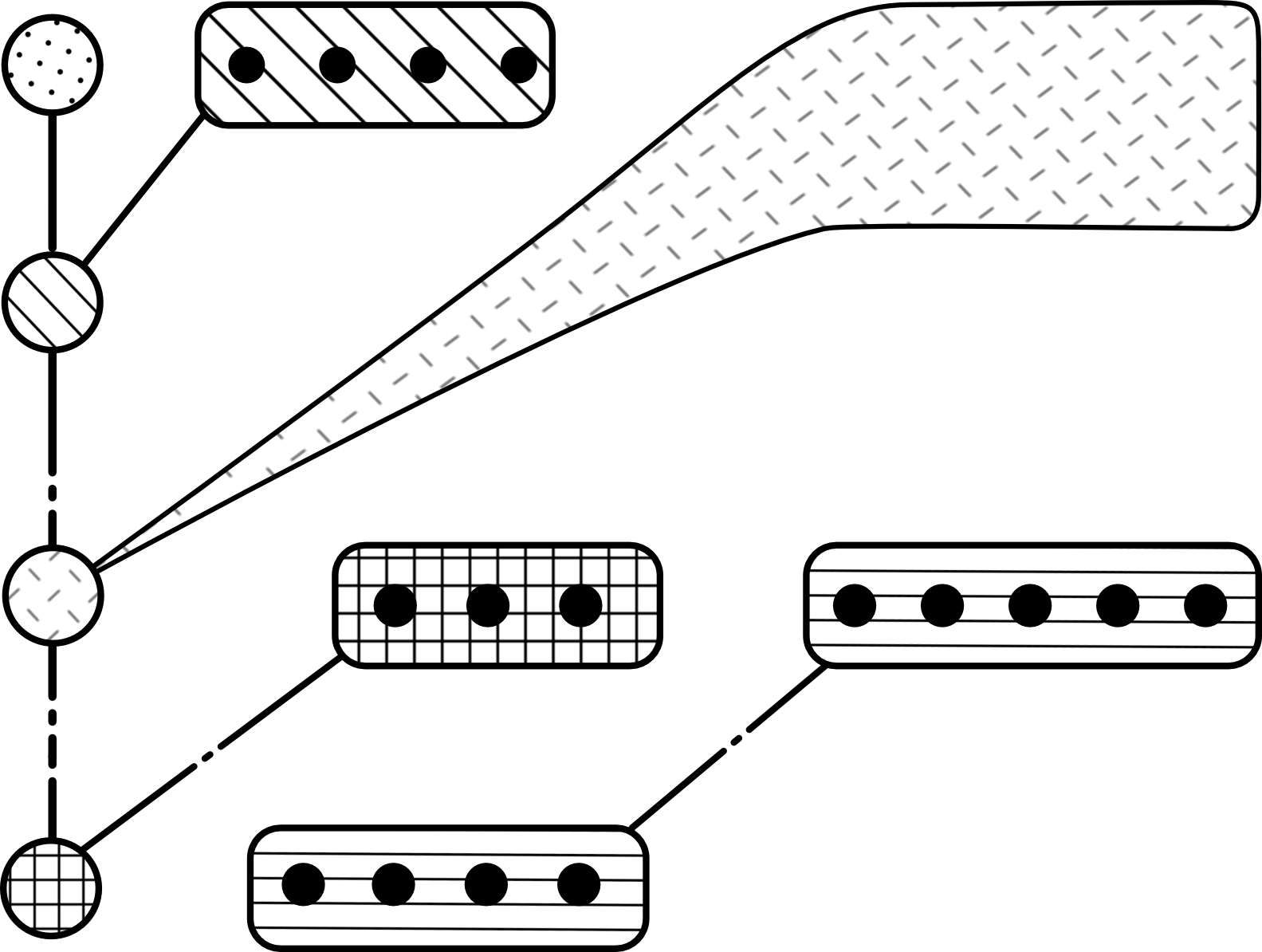}};

\draw (80,226.4) node [anchor=north west][inner sep=0.75pt]    {\Large$v_{k} ,\ X_{k}$};
\draw (193,119.4) node [anchor=north west][inner sep=0.75pt]    {\Large$v_{n-1} ,\ X_{n-1}$};
\draw (80,338.4) node [anchor=north west][inner sep=0.75pt]    {\Large$v_{0} ,\ X_{0}$};
\draw (280,268.4) node [anchor=north west][inner sep=0.75pt]    {\Large$Z^{( 1)}(k-1)$};
\draw (479,269.4) node [anchor=north west][inner sep=0.75pt]    {\Large$Z^{0}( k)$};
\draw (465,105.4) node [anchor=north west][inner sep=0.75pt] {\Large$Z^{(k+1)}(m), 0\leq m \leq n-(k+1)$};
\draw (228,65.4) node [anchor=north west][inner sep=0.75pt]  [color={rgb, 255:red, 0; green, 0; blue, 0 }  ,opacity=1 ]  {\Large$Z^{(n)}(0)$};
\draw (80,28) node [anchor=north west][inner sep=0.75pt]    {\Large$v_{n} ,\ X_{n}$};
\draw (419,335.4) node [anchor=north west][inner sep=0.75pt]    {\Large$Z^{( 0)}( 0)$};
\draw (28,35) node [anchor=north west][inner sep=0.75pt]    {\Large$n$};
\draw (28,120) node [anchor=north west][inner sep=0.75pt]    {\Large$n-1$};
\draw (26,232) node [anchor=north west][inner sep=0.75pt]    {\Large$k$};
\draw (26,337.4) node [anchor=north west][inner sep=0.75pt]    {\Large$0$};
\end{tikzpicture}
}
\caption{Illustration of the process $(Z^{(n)}(m))_{n,m}$. The
    spinal particles and their types are pictured in the left-hand
    side lineage. The particle labeled $v_{n-1}$ has type $X_{n-1}$ and
    produces a biased (positive) number of offspring. The particle
    labeled $v_n$ is chosen among the type-$X_n$ progeny of $v_{n-1}$,
    whereas the rest of the progeny makes up the first generation
    $Z^{(n)}(0)$ of an independent copy of the initial process. The
    total population at generation $n$ is constituted from the
    descendants of the successive immigration waves
    $(Z^{(k)}(n-k))_{k\leq n}$.} \label{fig:spine}
\end{figure}
Let $(X_n)_{n\geq 0}$ be the spinal chain, started from an initial
condition $X_0$ drawn at random such that $\mathbb{P}(X_0=x) = Z_x(0)h_x/\langle Z(0), h\rangle$, and let us expand on the construction of $(Y_n)_{n\geq 1}$ in~\eqref{eq:Yn_biased} by defining a sequence of $\mathbb{N}^{\mathcal{X}}$-valued random processes $(Z^{(n)}(m))_{n,m\geq 0}$, such that:
\begin{itemize}
  \item The $(Z^{(n)}(0))_{n\geq 0}$ are independent conditional on $\mathcal{G}=\sigma(X_n,n\geq 0)$.
  \item For $n=0$, set $Z^{(n)}(0)=Z(0) - e_{X_0}$, where we recall the notation $e_y=(\mathds{1}_{x=y})_{x}\in \mathbb{N}^{\mathcal{X}}$.
  \item For $n\geq 1$, the distribution of $(Z^{(n)}(0), X_n)$ conditional on $\mathcal{G}$ and $X_{n-1}=x$ is biased such that   for every nonnegative measurable function $F:\mathbb{N}^{\mathcal{X}}\times \mathcal{X}\to \mathbb{R}_+$,
  \[
\mathbb{E}\Big[F\big(Z^{(n)}(0), X_n\big)\ \vert\   X_{n-1}=x \Big]\ =\ \frac{1}{\varrho} \sum_{y\in \mathcal{X}}\frac{h_y}{ h_x}\mathbb{E}\Big[L^{(x)}_{y}F\big(L^{(x)}-e_{y}, y\big) \Big].
  \]
  \item Finally, conditional on $(Z^{(n)}(0))_{n\geq 0}$ and independently of all other randomness, let $(Z^{(n)}(m))_{m\geq 0}$ be, for each $n$, a branching process started from $Z^{(n)}(0)$, with the same Markov kernel as the original Galton--Watson process $(Z(m))_{m\geq 0}$. We refer to Figure~\ref{fig:spine} for clarity.
\end{itemize}
With this definition, note that a version of $(Y_n)_n$ defined in~\eqref{eq:Yn_biased} is given by $Y_n=h_{X_n}+\langle Z^{(n)}(0),h\rangle$.

Now let $(\widetilde{W}_n)_{n\geq 0}$ denote the process defined by
\begin{equation}\label{eqdef:tildeW}
  \widetilde{W}_n\ =\ \varrho^{-n}\bigg(h_{X_n} + \sum_{k=0}^{n}\langle Z^{(k)}(n-k), h \rangle\bigg)\ =\ \varrho^{-n}h_{X_n} + \sum_{k=0}^{n}\varrho^{-k}\Big(\frac{\langle Z^{(k)}(n-k), h \rangle}{\varrho^{n-k}}\Big).
\end{equation}
The particles in the population process at generation $n$ descend from one of the spinal individuals $v_0,\ldots, v_{n}$.
Note that for each $k$, the process $W^{(k)}_n \coloneqq \varrho^{-n}\langle Z^{(k)}(n), h \rangle$ is the additive martingale of the process $Z^{(k)}$, and appears in the definition of $\widetilde{W}$. The following consequence of Lemma~\ref{lem:spinal-change-of-measure} provides a justification for this definition.

\begin{cor}\label{cor:law-of-W-under-Q}
  The distribution of $(W_n)$ under $\mathbb{Q}$ is that of $(\widetilde{W}_n)$.
\end{cor}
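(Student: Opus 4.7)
The plan is to deduce the corollary from Lemma~\ref{lem:spinal-change-of-measure} in two steps: first identify the $\mathbb{Q}$-law of the genealogy $\mathcal{T}_n$ with the law of the (unmarked) Kesten tree $\mathcal{T}^*_n$, then read off $W_n$ as a measurable functional of $\mathcal{T}^*_n$ and check that it coincides with $\widetilde{W}_n$.

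For the first step I would apply Lemma~\ref{lem:spinal-change-of-measure} to test functions of the form $F(\mathcal{T},v)=G(\mathcal{T})$ for an arbitrary measurable $G\colon\mathscr{T}\to\R$. Since $Z(n)h=\sum_{v\in G_n}h(x^v)$ and $W_n/(Z(0)h)$ is by construction the Radon--Nikodym density of $\mathbb{Q}_n$ with respect to $\mathbb{P}_n$, the right-hand side of the lemma can be rewritten as $\E_{\mathbb{Q}}[G(\mathcal{T}_n)]$, so that $\mathcal{T}_n$ under $\mathbb{Q}$ has the same law as the unmarked $\mathcal{T}^*_n$. Since the Kesten construction is recursive (hence $(\mathcal{T}^*_n)_n$ is automatically nested) and $\mathbb{Q}$ is a consistent measure on $\mathcal{F}_\infty$, this identification extends to the whole sequence of genealogies.

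For the second step I would enumerate the generation-$n$ vertices of $\mathcal{T}^*_n$. They partition into (i) the spine vertex $v_n$, contributing $h(X_n)$, and (ii) for each $k\in\{0,\ldots,n\}$, the descendants at generation $n$ of the off-spine siblings born at generation $k$. By the recursive definition of the Kesten tree, the off-spine siblings born at generation $k$ have precisely the law prescribed for $Z^{(k)}(0)$: at $k=0$ this is $Z(0)-e_{X_0}$, while at $k\geq 1$ it is $L^{(X_{k-1})}-e_{X_k}$ with $L^{(X_{k-1})}$ biased by $L^{(X_{k-1})}_{X_k}$. Since, conditional on the spine, these residual siblings independently generate subtrees under the original GW kernel, the generation-$n$ $h$-weight of their descendants has the same joint law as $\bigl(Z^{(k)}(n-k)h\bigr)_{k}$. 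Summing the contributions and dividing by $\rho^n$ reproduces~\eqref{eqdef:tildeW} exactly, and since the same spine and the same off-spine subtrees are used for all $n$, the identification holds jointly in $n$.

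The only non-routine point is the sibling distribution in the second step: one must verify that after biasing $L^{(X_{k-1})}$ by $L^{(X_{k-1})}_{X_k}$ and then selecting the spine child $v_k$ uniformly among the type-$X_k$ offspring, the residual vector is indeed distributed as $Z^{(k)}(0)$ and that the various residuals are conditionally independent given $\mathcal{G}$. Once this is checked by a direct computation on the biased distribution, everything else is bookkeeping.
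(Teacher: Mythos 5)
Your proposal is correct and follows exactly the approach the paper intends (the paper omits the proof, calling it an ``immediate consequence''): apply Lemma~\ref{lem:spinal-change-of-measure} with test functions depending only on the unmarked tree to identify $\mathcal{T}_n$ under $\mathbb{Q}$ with $\mathcal{T}^*_n$, then decompose the generation-$n$ $h$-weight of $\mathcal{T}^*_n$ along the spine and the off-spine subtrees branching off at each generation, which by construction reproduces the definition of $\widetilde{W}_n$ in~\eqref{eqdef:tildeW}. The one point you flag as requiring verification — that removing the uniformly chosen spine child from $L^{(X_{k-1})}$ biased by $L^{(X_{k-1})}_{X_k}$ yields the law of $(L^{(X_{k-1})}-e_{X_k})$ biased by $L^{(X_{k-1})}_{X_k}$, and that these residuals are conditionally independent given $\mathcal{G}$ — is immediate from the recursive Kesten-tree construction, so the argument is complete.
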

\begin{proof}
For any nonnegative measurable $F:\mathbb{R}\mapsto \mathbb{R}_+$, the function $\Psi:(\mathcal{T}_n, v_n) \longmapsto F\big(\varrho^{-n}\sum_{v\in \mathcal{T}_n\setminus \mathcal{T}_{n-1}}h_{x^v}\big)$ is nonnegative and measurable on $\mathscr{T}^*$. By Lemma~\ref{lem:spinal-change-of-measure},
\begin{align*}
\mathbb{E}\big[F(\widetilde{W}_n)\big]\ &=\ \mathbb{E}\big[\Psi(\mathcal{T}^*_n, v_n)\big]\\
& =\ \mathbb{E}\bigg[\frac{1}{\varrho^n \langle Z(0),h\rangle}\sum_{v\in \mathcal{T}_{n}\setminus \mathcal{T}_{n-1}} h_{x^v}\Psi(\mathcal{T}_{ n},v)\bigg], \\
&=\  \mathbb{E}\Big[\frac{W_n}{\langle Z(0),h\rangle}  F(W_n)\Big]\\
&=\ \mathbb{Q}\big[F(W_n)\big], 
\end{align*}
where we used the construction of $\mathbb{Q}$ in the last line.
\end{proof}

As in~\citet{lyons_pemantle_peres_1995}, The two following classic results are key to proving a Kesten--Stigum theorem through the spinal change of measures.
We state them without proof: the first one is deduced for instance from~\citet[Theorem~5.3.3]{Dur10}, with $\mu=\mathbb{Q}$, $\nu=\mathbb{P}$ and $A=\{W>0\}$; the second one is a simple consequence of the Borel--Cantelli lemmas.

\begin{lem}\label{lem:dichotomy-tildeW}\hfill
  \begin{mathlist}
    \item If $\limsup_n \widetilde{W}_n < \infty$ a.s, then $\mathbb{E}[W]=\mathbb{E}[W_0]$.
    \item If $\limsup_n \widetilde{W}_n = \infty$ a.s, then $\mathbb{E}[W]=0$.
  \end{mathlist}
\end{lem}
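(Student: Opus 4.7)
My approach is to derive both items from a single master identity obtained via the Lebesgue decomposition of $\mathbb{Q}$ with respect to $\mathbb{P}$ on $\mathcal{F}_\infty$.  Since $(W_n/W_0)$ is a nonnegative $\mathbb{P}$-martingale of unit mean, Doob's theorem gives a finite $\mathbb{P}$-almost sure limit $W/W_0$.  Setting $X_\infty := \liminf_n W_n/W_0$ (defined everywhere, possibly $+\infty$), \citet[Theorem~5.3.3]{Dur10} applied with $\mu = \mathbb{Q}$ and $\nu = \mathbb{P}$ yields, for every $A \in \mathcal{F}_\infty$,
\[
  \mathbb{Q}(A) \;=\; \mathbb{E}\!\pac{X_\infty\,\1_A} \;+\; \mathbb{Q}\!\pa{A \cap \{X_\infty = \infty\}},
\]
so the singular part of $\mathbb{Q}$ with respect to $\mathbb{P}$ is carried by the $\mathbb{P}$-null event $\{\liminf_n W_n = \infty\}$.

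Taking $A = \{\limsup_n W_n < \infty\}$ kills the singular term (since $\limsup < \infty$ precludes $\liminf = \infty$), and the absolutely continuous term reduces to $\mathbb{E}[W/W_0] = \mathbb{E}[W]/W_0$ since $\mathbb{P}(A) = 1$.  On the other hand, Corollary~\ref{cor:law-of-W-under-Q} identifies the $\mathbb{Q}$-law of $(W_n)$ with the $\mathbb{P}$-law of $(\widetilde{W}_n)$, so $\mathbb{Q}(A) = \mathbb{P}(\limsup_n \widetilde{W}_n < \infty)$.  Combining these yields the master identity
\[
  \mathbb{E}[W] \;=\; W_0 \cdot \mathbb{P}\!\pa{\limsup_n \widetilde{W}_n < \infty}.
\]
The two items are then immediate: under (i) the right-hand side equals $W_0 = \mathbb{E}[W_0]$, and under (ii) it equals~$0$.

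The only nontrivial ingredient is the Lebesgue decomposition itself, together with the identification of its singular support as $\{\liminf_n W_n = \infty\}$ rather than an unspecified $\mathbb{P}$-null set; everything else is bookkeeping around the change of measure.  A hands-on alternative for (i), not needing the full decomposition, is to note that $\limsup_n \widetilde{W}_n < \infty$ a.s.\ implies $\sup_n \widetilde{W}_n < \infty$ a.s., hence $\mathbb{Q}(\sup_n W_n > K) \to 0$ as $K \to \infty$; combined with $\mathbb{E}[W_n \,\1_{W_n > K}] = W_0\,\mathbb{Q}(W_n > K)$, this is exactly uniform integrability of $(W_n)$ under $\mathbb{P}$, and $\mathbb{E}[W] = W_0$ follows from $L^1$-convergence of the martingale.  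For (ii), a short Borel--Cantelli-style argument goes through mutual singularity: under the hypothesis, the $\mathbb{P}$-null event $\{\limsup_n W_n = \infty\}$ has full $\mathbb{Q}$-measure, so $\mathbb{Q} \perp \mathbb{P}$ on $\mathcal{F}_\infty$, forcing the absolutely continuous part of $\mathbb{Q}$ (of total mass $\mathbb{E}[W]/W_0$) to vanish.
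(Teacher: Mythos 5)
Your argument is correct and takes exactly the route the paper indicates: the paper states the lemma without proof and cites Theorem~5.3.3 of \citet{Dur10} with $\mu=\mathbb{Q}$ and $\nu=\mathbb{P}$ --- that is, the Lebesgue decomposition of $\mathbb{Q}$ with respect to $\mathbb{P}$ on $\mathcal{F}_\infty$, which is precisely what you unpack via Corollary~\ref{cor:law-of-W-under-Q}. The master identity $\mathbb{E}[W]=W_0\cdot\mathbb{P}\big(\limsup_n\widetilde{W}_n<\infty\big)$ and the two hands-on alternatives you sketch (uniform integrability for (i), mutual singularity for (ii)) are all valid renderings of that same reference.
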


\begin{lem}\label{lem:dichotomy-expectation}
  Let $(X_n)_{n\geq 1}$ be i.i.d.\ nonnegative random variables
  on some probability space $(\Omega, \mathcal{F}, \mathbb{P})$. Then almost surely:
  \[
    \limsup_{n\to\infty} \frac{X_n}{n}\ =\ \begin{cases}
      0 \quad & \text{if }\ \mathbb{E}[X_1]\ <\ \infty,\\
      \infty & \text{if }\ \mathbb{E}[X_1]\ =\ \infty.
    \end{cases}
  \]
  If $\mu$ is a (non-necessarily finite) nonnegative measure on
    $(\Omega, \mathcal{F})$ and if the $(X_n)_{n\geq 1}$ are only $\mu$-identically
    distributed in the sense that the push-forward measures
    $\mu\circ X_n^{-1}$ do not depend on $n$, then
    $\int X_1 \, d\mu <\infty$ still implies that
    $\limsup_{n\to\infty} X_n/n=0$ holds $\mu$-almost everywhere.
\end{lem}

We will in fact apply Lemma~\ref{lem:dichotomy-expectation} to
  $\log_+(Y_n)$, where $(Y_n)_n$ are positive random variables. Note
  that the conclusions of the lemma are adapted using the following
  remark: for a sequence of positive real numbers $(y_n)_{n\geq 1}$,
  $\log_+(y_n)/n\to 0$ if and only if $y_n$ grows subexponentially,
  \textit{i.e.} if
  \begin{equation}\label{eq:subexponential_growth}
    \forall \gamma>1, \quad \gamma^{-n}y_n \ \longrightarrow\ 0.
  \end{equation}
  Conversely, $\limsup_{n\to\infty} \log_+(y_n)/n = \infty$ if and only if for
  any $\gamma>1$, we have $y_n \geq \gamma^{n}$ for infinitely many
  $n$ indices. 

We are now ready to prove the different parts of Theorem~\ref{thm:ks_condition}.

\begin{prop}\label{prop:thm-ks-part1}
  We have $\limsup_n \widetilde{W}_n < \infty$ a.s.\ if~\eqref{eq:simple_condition} or~\eqref{eq:nsc} is satisfied.
\end{prop}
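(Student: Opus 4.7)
The strategy is to dominate $\widetilde{W}_n$, uniformly in $n$, by an almost surely finite random variable. Setting $N^{(k)}_m := \rho^{-m} Z^{(k)}(m)h$, the decomposition~\eqref{eqdef:tildeW} reads $\widetilde{W}_n = \rho^{-n}h(X_n) + \sum_{k=0}^n \rho^{-k} N^{(k)}_{n-k}$. Conditional on the $\sigma$-field $\mathcal{H} := \sigma(\mathcal{G}, (Z^{(j)}(0))_{j\geq 0})$, the processes $(N^{(k)}_m)_{m\geq 0}$ are mutually independent nonnegative martingales with respective initial values $Z^{(k)}(0)h \leq Y_k$. The plan is to reduce everything to almost sure finiteness of the series
\[
  S := \sum_{k\geq 0}(k+1)^2\rho^{-k}Y_k.
\]

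Assuming $S<\infty$ a.s., two consequences follow. First, $(k+1)^2\rho^{-k}Y_k\to 0$ implies $\rho^{-n}h(X_n)\leq\rho^{-n}Y_n\to 0$. Second, Doob's maximal inequality with threshold $\lambda_k := \rho^k/(k+1)^2$ yields $\mathbb{P}(\sup_m N^{(k)}_m>\lambda_k\mid\mathcal{H})\leq(k+1)^2\rho^{-k}Y_k$; the exceedance events being conditionally independent given $\mathcal{H}$, the conditional Borel--Cantelli lemma guarantees only finitely many of them occur on $\{S<\infty\}$, so $\sum_k\rho^{-k}\sup_m N^{(k)}_m\leq\text{(finite)}+\sum_k(k+1)^{-2}<\infty$ a.s. The two items combine to bound $\widetilde W_n$ by a finite random variable independent of $n$, giving $\limsup_n\widetilde W_n<\infty$ a.s. So the proof reduces to showing $S<\infty$ a.s.\ under each hypothesis.

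For~\eqref{eq:nsc}, I would decompose the spinal chain along excursions from the fixed state $x$ in the statement: letting $T_0=0<T_1<T_2<\cdots$ be the successive visits to $x$, the quantities $\hat S_j := \sum_{k=T_{j-1}+1}^{T_j}\rho^{-(k-T_{j-1})}Y_k$ are i.i.d.\ under $\mathbb{P}_x$ by the strong Markov property, and Lemma~\ref{lem:dichotomy-expectation} applied to $(\log_+ \hat S_j)$ together with~\eqref{eq:nsc} gives $(\log_+ \hat S_j)/j\to 0$ a.s. Regrouping summands with $k\leq T_j$ inside excursion $j$ yields $S\leq\sum_j(T_j+1)^2\rho^{-T_{j-1}}\hat S_j$. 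The trivial $T_{j-1}\geq j-1$ produces geometric decay $\rho^{-(j-1)}$, and in the positive recurrent case $T_j\leq C j$ a.s.\ from the strong law for i.i.d.\ excursion lengths, so the subexponential factor $(T_j+1)^2\hat S_j$ is crushed and the series converges. Under~\eqref{eq:simple_condition}, Remark~\ref{rem:simple_implies_nsc} reduces the positive recurrent case to the previous argument. In the null recurrent case, I would instead verify $\mathbb{E}[\sum_k(k+1)^2\rho^{-k}\log(1+Y_k)]<\infty$: the biased-offspring formula for $Y_k$ combined with Fubini expresses this as a constant multiple of $\sum_y\tilde h_y\mathbb{E}[(L^{(y)}h)\log(1+L^{(y)}h)]$ (finite by~\eqref{eq:simple_condition}) weighted by the Green-type factor $\sum_n(n+2)^2\rho^{-2n}M^n_{z,y}/\tilde h_y$, which converges because $\rho^{-2}$ lies strictly inside the radius of convergence $\rho^{-1}$ of $s\mapsto\sum_n M^n_{z,y}s^n$.

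The main obstacle is the null recurrent case: excursion lengths $T_1$ have infinite mean and may carry heavy tails, so the polynomial factor $(T_j+1)^2$ is not pathwise absorbed by $\rho^{-T_{j-1}}$ in the~\eqref{eq:nsc} argument, and the Green-type factor in the~\eqref{eq:simple_condition} argument requires a uniform control on $\rho^{-n}M^n_{z,y}/\tilde h_y$ that is less immediate than in the positive recurrent case where $\rho^{-n}M^n\to h\tilde h$ is entrywise and positive. Both difficulties should be overcome by exploiting the structure of Proposition~\ref{prop:perron_frobenius} more carefully or by adapting the threshold $\lambda_k$ to the null recurrent regime.
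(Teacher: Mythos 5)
Your strategy via Doob's maximal inequality and Borel--Cantelli is a valid alternative to the paper's approach, and your argument is complete in the \emph{positive recurrent} case. But the extra polynomial factor $(k+1)^2$ you must insert into $S$ to make the threshold $\lambda_k=\rho^k/(k+1)^2$ summable is precisely what breaks both of your routes in the null recurrent case, and you correctly flag this without resolving it --- so there is a genuine gap.

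Concretely: under~\eqref{eq:nsc} your bound $S\leq\sum_j(T_j+1)^2\rho^{-T_{j-1}}\hat S_j$ requires the factor $(T_j+1)^2$ to be subexponential in $j$. In the null recurrent case the excursion lengths $\tau_j=T_j-T_{j-1}$ are i.i.d.\ with infinite mean, and can easily have $\limsup_j(\log\tau_j)/j>0$ (e.g.\ when $\mathbb{E}[\log_+\tau_1]=\infty$), in which case $T_j$ grows exponentially along a subsequence and $(T_j+1)^2$ is not absorbed by $\rho^{-T_{j-1}}\hat S_j\lesssim\rho^{-(j-1)}\gamma^j$. Nothing in~\eqref{eq:nsc} controls $\tau_j$ directly, since $\hat S_j$ can be small even on a very long excursion. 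Under~\eqref{eq:simple_condition} your Fubini computation leaves the factor $G_y:=\sum_n(n+2)^2\rho^{-2n}M^n_{z,y}/\tilde h_y$; $\rho^{-2}<\rho^{-1}$ only guarantees $G_y<\infty$ for each fixed $y$, whereas you need $\sup_y G_y<\infty$ to compare against $\sum_y\tilde h_y\mathbb{E}[(L^{(y)}h)\log(1+L^{(y)}h)]$. Writing $M^n_{zy}/\tilde h_y = \rho^n h_z\,\mathbb{P}_z(X_n=y)/\pi(y)$ shows this amounts to a uniform-in-$y$ bound on $\mathbb{P}_z(X_n=y)/\pi(y)$, which fails for general (non-reversible) recurrent chains.

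The paper sidesteps both problems by avoiding the polynomial factor altogether. Conditioning on $\mathcal{G}'=\sigma(X_n,Z^{(n)}(0),\,n\geq 0)$, the process $W'_n:=\widetilde W_n-\rho^{-n}h(X_n)$ is a nonnegative submartingale whose conditional Doob compensator is exactly $\sum_{k\leq n}\rho^{-k}Z^{(k)}(0)h$; if this converges a.s.\ then $W'_n$ is conditionally $L^1$-bounded and hence converges a.s.\ by the martingale convergence theorem. The required summability is therefore only $\sum_k\rho^{-k}Z^{(k)}(0)h<\infty$, with no $(k+1)^2$. Under~\eqref{eq:nsc} this becomes $\sum_k\rho^{-T_{k-1}}Y'_k<\infty$, which follows from $T_{k-1}\geq k-1$ and the subexponential growth of $(Y'_k)$ alone --- no rate on $T_j$ is needed, so the argument covers null recurrence. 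Under~\eqref{eq:simple_condition} the paper works directly under the (possibly infinite) stationary measure $\mu=\sum_x\pi(x)\,\mathbb{P}_x$, under which $(Y_n)$ is identically distributed with $\mathbb{E}_\mu[\log_+Y_1]<\infty$; Lemma~\ref{lem:dichotomy-expectation} then gives $(\log Y_n)/n\to 0$ $\mu$-a.e., hence $\mathbb{P}_{x_0}$-a.s.\ since $\mathbb{P}_{x_0}\leq\pi(x_0)^{-1}\mu$. This bypasses the Green-function factor entirely. If you want to rescue your maximal-inequality route, you would need either to drop the $(k+1)^2$ by grouping the maximal inequality along excursion blocks rather than individual indices, or to establish the uniform-in-$y$ control of $\mathbb{P}_z(X_n=y)/\pi(y)$ --- neither of which is automatic under Assumption~\ref{ass:recurrence}.
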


\begin{proof}
  Let us first show that it is sufficient to study $(\widetilde{W}_n)$ for $Z(0)=e_{x_0}$, for a fixed $x_0\in \mathcal{X}$.
  For any other $Z(0)$, defining $T_{x_0}=\min\{n \geq 0 : X_n = x_0\}$, which is finite a.s.\ since the spinal chain is recurrent, by~\eqref{eqdef:tildeW} we can write
  \[
    \widetilde{W}_n\ =\ \sum_{k=0}^{T_{x_0}-1} \varrho^{-k}\Big(\frac{\langle Z^{(k)}(n-k), h\rangle}{\varrho^{n-k}}\Big) \;+\; \widetilde{W}^{0}_{n-T_{x_0}}, \qquad n \geq T_{x_0},
  \]
  where, by the strong Markov property $(\widetilde{W}^{0}_{m})_{m\geq 0}$ is distributed as $\widetilde{W}_{m}$ with initial condition $Z(0)=e_{x_0}$.
  Now the sum in the display above is an almost surely finite sum of processes that converge almost surely, since conditional on $Z^{(k)}(0)$, the process $(\varrho^{-m}\langle Z^{(k)}(m), h \rangle)_{m\geq 0}$ is a nonnegative martingale.
  This shows that $\limsup_n\widetilde{W}_n=\infty$ if and only if $\limsup_n\widetilde{W}^0_n=\infty$; thus for the rest of the proof, we may assume that $Z(0)=e_{x_0}$.
  
  The description of $(\widetilde{W}_n)$ given in~\eqref{eqdef:tildeW} is convenient for this problem since conditional on $\mathcal{G}'\coloneqq\sigma(X_n,Z^{(n)}(0),n\geq 0)$, we see that the process
  \[
    W'_n \ \coloneqq\ \widetilde{W}_n - \varrho^{-n}h_{X_n}\ =\ \varrho^{-n}\sum_{k=0}^{n}\langle Z^{(k)}(n-k),h \rangle
  \]
  is a nonnegative submartingale.
  To see this, note that for a fixed $k\geq 0$, the process $(\rho^{-n}\langle Z^{(k)}(n-k),h \rangle)_{n\geq k}$ is a martingale, so we compute
  \[
    \mathbb{E}\big[W'_{n+1} \mid \mathcal{G}'\vee \mathcal{F}_n\big]\ =\ W'_n + \varrho^{-(n+1)}\langle Z^{(n+1)}(0),h\rangle.
  \]
  Therefore, if we can show that
  \begin{equation}\label{eq:for-finite-W'}
    \sum_{n\geq 1}\varrho^{-n}\langle Z^{(n)}(0),h\rangle \ <\ \infty \qquad a.s,
  \end{equation}
  we deduce from standard martingale arguments that $W'=\lim_{n\to \infty}W'_n$ exists and is finite.
  
  First, we assume that~\eqref{eq:simple_condition} is satisfied.
  Since $\pi=(\tilde{h}_xh_x)_x$ is a stationary distribution for the
  spinal Markov chain, the random variables $(Y_n)_{n\geq 1}$
    are $\mu$-identically distributed in the sense of
    Lemma~\ref{lem:dichotomy-expectation}, with $\mu \coloneqq \sum_{x\in \mathcal{X}} \pi_x \mathbb{P}_x(\cdot)$. Note that in the recurrent case, the measure $\mu$ is non-necessarily finite.
  We also compute
  \begin{align*}
   \int_{\mathcal{X}} \log_+(Y_n) d\mu\ & =\ \sum_{x\in \mathcal{X}} \pi_x \mathbb{E}_x\big[\log_+(Y_1)\big]\\
    &=\ \sum_{x\in \mathcal{X}}\tilde{h}_x h_x \sum_{y\in \mathcal{X}} \frac{M_{xy}h_y}{\varrho h_x} \cdot \frac{1}{M_{xy}}\mathbb{E}\Big[L^{(x)}_y\log_+\langle L^{(x)},h\rangle\Big]\\
    &=\  \frac{1}{\varrho}\sum_{x\in \mathcal{X}}\tilde{h}_x \mathbb{E}\Big[\langle L^{(x)},h\rangle\log_+\langle L^{(x)},h\rangle\Big]\ <\ \infty,
  \end{align*}
Therefore, by Lemma~\ref{lem:dichotomy-expectation}, $(Y_n)_n$
  grows subexponentially, \textit{i.e.}~\eqref{eq:subexponential_growth} is satisfied $\mu$-almost everywhere, which implies that this is true $\mathbb{P}_{x_0}$-almost surely.
  In the construction of $(\widetilde{W}_n)$, we can write $Y_n = h_{X_n}+\langle Z^{(n)}(0),h\rangle$, so that both $\langle Z^{(n)}(0),h\rangle$ and $h_{X_n}$ grow subexponentially, implying~\eqref{eq:for-finite-W'} and also $\abs{\widetilde{W}_n - W'_n} = \varrho^{-n}h_{X_n}\to 0$.
  Putting everything together, we get that $\lim_{n\to \infty}\widetilde{W}_n$ exists and is finite.
  
  We now assume that~\eqref{eq:nsc} is satisfied for $x=x_0$.
  Let us denote by $T_0=0$ and $T_1,T_2,\dots$ the successive return times to $x_0$ for the spinal chain $(X_n)$, and for $n\geq 0$, let us define $k_n$ such that $T_{k_n-1}\leq n < T_{k_n}$.
  Grouping terms, the sum in~\eqref{eq:for-finite-W'} is bounded by
  \begin{equation}\label{eqdef:Y_prime}
      \sum_{k=1}^{k_n}\sum_{j=T_{k-1}+1}^{T_k}\varrho^{-j}Y_j\ =\  \sum_{k=1}^{k_n}\varrho^{-T_{k-1}} \Big(\sum_{j=1}^{T_{k}-T_{k-1}}\varrho^{-j}Y_{T_{k-1}+j}\Big)\ =\ \sum_{k=1}^{k_n}\varrho^{-T_{k-1}} Y'_k,
  \end{equation}
  where the $Y'_k$ are i.i.d., with $Y'_1=\sum_{j=1}^{T_1}\varrho^{-j}Y_j$.
  Therefore, by Lemma~\ref{lem:dichotomy-expectation}, Condition~\eqref{eq:nsc} implies that $(Y'_k)$ grows subexponentially, and since necessarily $T_k\geq k$, this shows that~\eqref{eq:for-finite-W'} holds.
  Therefore, $\lim_n W'_n$ exists and is finite, and it remains to show that $\varrho^{-n}h_{X_n}\to 0$.
  But by definition of $k_n$, we have $\varrho^{-n}h_{X_n}\leq \varrho^{-n}Y_n \leq \varrho^{-T_{k_n-1}}Y'_{k_n}$, so we conclude that $\lim_n \widetilde{W}_n$ exists and is finite.
\end{proof}

\begin{prop}\label{prop:ks-necessary-part}
  If $M$ is positive recurrent and~\eqref{eq:nsc} does not hold, we have 
  \[
  \limsup_{n\to\infty}\widetilde{W}_n\ =\ \infty,
  \] up to a null measure set.
\end{prop}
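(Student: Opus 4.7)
The plan is to combine a simple pointwise lower bound on $\widetilde{W}_n$ with Lemma~\ref{lem:dichotomy-expectation} applied to the i.i.d.\ excursion quantities $(Y'_k)$ defined in~\eqref{eqdef:Y_prime}, so as to force $\widetilde{W}_n$ to blow up along a subsequence whenever $\mathbb{E}_{x_0}[\log_+ Y'_1]=\infty$.

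First, I would perform the same reduction to $Z(0)=e_{x_0}$ as in the proof of Proposition~\ref{prop:thm-ks-part1}: writing $T_{x_0}$ for the first hitting time of $x_0$ by the spinal chain and decomposing $\widetilde{W}_n$ into a finite (random) sum of nonnegative martingales --- which converges a.s.\ as $n\to\infty$ --- plus a shifted copy $\widetilde{W}^0_{n-T_{x_0}}$ distributed as $\widetilde{W}$ under $Z(0)=e_{x_0}$, one sees that $\limsup_n \widetilde{W}_n=\infty$ iff $\limsup_n \widetilde{W}^0_n=\infty$. So I may assume $Z(0)=e_{x_0}$ and hence $X_0=x_0$. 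Keeping in~\eqref{eqdef:tildeW} only the spinal term $\rho^{-n}h(X_n)$ and the $k=n$ contribution $\rho^{-n}Z^{(n)}(0)h$ yields the key pointwise bound
\[
  \widetilde{W}_n \geq \rho^{-n}\bigl(h(X_n) + Z^{(n)}(0)h\bigr) = \rho^{-n} Y_n,
\]
and it suffices to show $\limsup_n \rho^{-n}Y_n = \infty$ almost surely.

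For this, I would reuse the excursion decomposition of~\eqref{eqdef:Y_prime}: each $n\geq 1$ lies in a unique block $(T_{k-1},T_k]$, and since $Y'_k = \sum_{j=1}^{T_k-T_{k-1}}\rho^{-j}Y_{T_{k-1}+j}$ has at most $T_k-T_{k-1}$ nonnegative summands, some $n=T_{k-1}+j$ in that block satisfies
\[
  \rho^{-n}Y_n \geq \rho^{-T_{k-1}}\,\frac{Y'_k}{T_k-T_{k-1}}.
\]
I would then take logarithms, divide by $k$, and invoke Lemma~\ref{lem:dichotomy-expectation} three times: (a) the $(Y'_k)_{k\geq 1}$ are i.i.d.\ under $\mathbb{P}_{x_0}$ by the strong Markov property of $X$ and the conditional independence of the $(Y_n)$ given $\mathcal{G}$, and the failure of~\eqref{eq:nsc} means $\mathbb{E}_{x_0}[\log_+ Y'_1]=\infty$, so $\limsup_k \log_+ Y'_k/k=\infty$ a.s.; (b) the inter-arrival times $T_k-T_{k-1}$ are i.i.d.\ with finite mean $1/(\tilde{h}_{x_0}h_{x_0})$ by positive recurrence, so $T_{k-1}/k$ converges a.s.\ to this mean by the strong law of large numbers; (c) applying the lemma to the i.i.d.\ variables $\log(1+T_k-T_{k-1})$ --- which have finite expectation --- gives $\log(T_k-T_{k-1})/k\to 0$ a.s. Combining (a)--(c), the logarithm of the right-hand side of the previous display, divided by $k$, tends to $+\infty$ along a subsequence of $k$, so $\rho^{-n}Y_n\to\infty$ along a subsequence of $n$, and hence $\limsup_n \widetilde{W}_n=\infty$ a.s.

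The only non-routine point is the i.i.d.\ structure asserted in (a), which follows cleanly from the construction~\eqref{eqdef:Yn} and the strong Markov property of $X$ at the return times; beyond that I do not expect any serious obstacle, the rest being bookkeeping around the three applications of Lemma~\ref{lem:dichotomy-expectation}.
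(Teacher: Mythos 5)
Your proposal is correct and follows essentially the same route as the paper: the reduction to $Z(0)=e_{x_0}$, the pointwise bound $\widetilde{W}_n\geq \rho^{-n}Y_n$, the observation that some term in the $k$th excursion sum $Y'_k$ must be at least $\rho^{-T_{k-1}}Y'_k/(T_k-T_{k-1})$, and the combination of Lemma~\ref{lem:dichotomy-expectation} (for $\log_+ Y'_k$ and for $T_k-T_{k-1}$) with the strong law for $T_{k-1}/k$. The paper phrases the conclusion by fixing $\gamma>\rho^{c}$ and noting $\gamma^k\rho^{-T_{k-1}}/(T_k-T_{k-1})\to\infty$, whereas you take logarithms and divide by $k$; these are equivalent, so the two proofs are the same in substance.
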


\begin{proof}
  Here we take the same notation as in the proof above, and assume that~\eqref{eq:nsc} does not hold.
  Since the $(Y'_k)$ are i.i.d., by Lemma~\ref{lem:dichotomy-expectation} we get that almost surely, for any $\gamma>1$, we have $Y'_k \geq \gamma^{k}$ for infinitely many $k$.
  Let us fix $\gamma > \varrho^{c}$, where $c=\mathbb{E}[T_1]$, which is finite because $(X_n)$ is assumed to be positive recurrent.
  Note that by the definition of $Y'_k$ in~\eqref{eqdef:Y_prime}, if $Y'_k \geq \gamma^{k}$ holds, then necessarily we have
  \begin{equation}\label{eq:max_Y_j}
    \max_{T_{k-1}+1\leq j\leq T_k} \big(\varrho^{-j}Y_{j}\big)\ \geq\ \frac{\gamma^{k}\varrho^{-T_{k-1}}}{(T_{k}-T_{k-1})}.
\end{equation}
  Because $\mathbb{E}[T_1]=c<\infty$, we have $T_k\sim ck$ a.s., and furthermore (again by Lemma~\ref{lem:dichotomy-expectation}), for $k$ large enough, we will have $T_{k}-T_{k-1}\leq k$, so with our choice of $\gamma$, the right-hand-side in~\eqref{eq:max_Y_j} tends to $+\infty$ as $k\to \infty$.
  Summing up, we have shown that almost surely, there exists a sequence $j_m\to\infty $ such that $\varrho^{-j_m}Y_{j_m}\to \infty$. At last, the fact that by definition $\widetilde{W}_n \geq \varrho^{-n}Y_n$ concludes the proof.
\end{proof}

Putting together Proposition~\ref{prop:thm-ks-part1} and Proposition~\ref{prop:ks-necessary-part}, we have shown that in the positive recurrent setting, \eqref{eq:nsc} is equivalent to $\limsup_n \widetilde{W}_n <\infty$, which by Lemma~\ref{lem:dichotomy-tildeW} is equivalent to $\mathbb{E}[W]=\langle Z(0), h \rangle$. We have in addition that~\eqref{eq:simple_condition} is in general only a sufficient condition -- see Section~\ref{sec:examples} for an example satisfying~\eqref{eq:nsc} but not~\eqref{eq:simple_condition}.

\medskip

To complete the proof of Theorem~\ref{thm:ks_condition}, it remains to tackle the equivalence of the three items~\ref{eq:point_1_ext}-\ref{eq:point_3_ext}.
First, we have already seen in Lemma~\ref{lem:dichotomy-tildeW} that $\mathbb{E}[W]$ is either $0$ or $\langle Z(0), h\rangle$, which shows that~\ref{eq:point_1_ext} and~\ref{eq:point_2_ext} are equivalent.
Let us see that~\ref{eq:point_3_ext} implies~\ref{eq:point_2_ext}. We assume~\ref{eq:point_3_ext}, fix $x\in \mathcal{X}$ and note that since we are in the supercritical regime, we have $M^d_{x,x}>1$ for some $d$, whose existence comes from the definition of $\varrho$ in~\eqref{eq:rho}.
Then started from $Z(0)=e_x$, the process $(Z_x(nd))_{n\geq 0}$ stochastically dominates a supercritical (single-type) branching process, which therefore has a nonzero probability of survival.
But on the event of survival, there is no local extinction, so $W>0$ by assumption and~\ref{eq:point_2_ext} is therefore satisfied.

We now only need to show that~\ref{eq:point_2_ext} implies~\ref{eq:point_3_ext}.
Let us assume~\ref{eq:point_2_ext} holds.
First, we use Theorem~\ref{thm:type_convergence}.(ii), which shows in particular that for a fixed $x\in \mathcal{X}$, we have $\varrho^{-n}Z_x(n)\to W\tilde{h}_x$ in probability.
Note that we are not using a circular argument here since the proof of Theorem~\ref{thm:type_convergence} in Section~\ref{sec:proof_type_convergence} only uses the fact that $\mathbb{P}(W>0) \iff \mathbb{E}[W]=\langle Z(0), h\rangle$.
We can define an increasing subsequence $(n_k)_{k\geq 1}$ such that $\varrho^{-n_k}Z_x(n_k)\to W\tilde{h}_x$ almost surely, and this show that on the event $\{W>0\}$, there cannot be local extinction.
In other words we have
\[
  \{W>0\}\ \subset\ \mathrm{LocExt}^c.
\]

It remains to derive the converse inclusion.
By definition of the local extinction event we have $\mathrm{LocExt}^c = \{\exists x_0,\ \limsup_{n}Z_{x_0}(n)>0\}$. We fix $x_0\in \mathcal{X}$ and focus on the event $\{\limsup_{n}Z_{x_0}(n)>0\}$.
Let us fix some integer $K>0$.
Under Assumption~\ref{ass:recurrence} one can find $d\in \mathbb{N}$ such that $M^d_{x_0,x_0}>K$, and so in particular, started from $Z(0)=e_{x_0}$, we have $\mathbb{P}(Z_{x_0}(d)\geq K)=c>0$.

Define the recursive sequence of stopping times $\tau_0=\inf\{n>0,\ Z_{x_0}(n)\geq 1\}$ and for $k\geq 0$,  $\tau_{k+1}=\inf\{n>\tau_k+d,\ Z_{x_0}(n) \geq 1\}$.
The $(\tau_k)_k$ are all almost surely finite on $\{\limsup_{n}Z_{x_0}(n)>0\}$ and by the strong Markov property, the $(Z_{x_0}(\tau_n+d))_n$ are independent conditional on $\mathcal{G}\coloneqq\sigma(Z_{x_0}(\tau_n), n\geq 0)$ and all satisfy
\[
 \mathbb{P}\big(Z_{x_0}(\tau_n+d) > K \mid \mathcal{G}\big)\ \geq\ c.
\]

The conditional Borel--Cantelli lemma then yields that on $\{\limsup_{n}Z_{x_0}(n)>0\}$, $\limsup_n Z_x(n)>K$ holds almost surely. As a result the stopping time $t_K=\inf\{n>0,\ Z_{x_0}(n)\geq K\}$ is almost surely finite. By the branching property at generation $t_K$, each of the type-$x_0$ individuals at generation $n$ (name them $u$) is the ancestor to an independent multi-type branching process with same offspring distribution to which we associate an additive martingale $W_n^{(u)}$ converging pointwise to a copy $W^{(u)}$ of $W$ (independent of the other $W^{(v)},\ v\neq u$).
The initial martingale being smaller than the sum of the $W_n^{(u)}$, we have $W=0$ only if for all $u$, $W^{(u)}=0$.
Since at generation $t_K$ there are at least $K$ type-$x_0$ individuals, we then have
\begin{align*}
  \mathbb{P}\Big(\limsup_{n\to\infty}Z_{x_0}(n)>0\ \text{and}\ W=0\Big)\ &\leq \ \mathbb{P}\big(t_K <\infty \text{ and }W=0\big)\\
  &\leq\ \mathbb{P}\big(t_K <\infty) \mathbb{P}\big(W=0 \mid Z(0)= e_{x_0}\big)^K.
\end{align*}
Under~\ref{eq:point_2_ext} and because we assumed irreducibility, $\mathbb{P}(W=0)<1$ and
the upper bound tends to $0$ as $K\to \infty$, which yields $\{\limsup_{n}Z_{x_0}(n)>0\}\subset \{W>0\}$. Taking the union (which is countable) over $x_0$, we finally derive that
\[
  \mathrm{LocExt}^c \ \subset\ \{W>0\},
\]
up to a null-measure set.
This concludes the proof of Theorem~\ref{thm:ks_condition}.
Note that survival of a given type almost surely upon non extinction of the process is stated as a sufficient criterion stated in~\cite{ligonniere_kesten_2024}  (yet within a different setting with varying environment and stronger moment assumptions) to recover the dichotomy between $\{W=0\}$ and the event of survival.

\section{Proof of Theorem~\ref{thm:type_convergence}}\label{sec:proof_type_convergence}

In this part, we will use several times a version of Scheffé's lemma for nonnegative random variables, which we state in our specific setting, for the sake of clarity.
\begin{lem}\label{lem:special-Scheffe}
  For $n\geq 1$ and $x\in \mathcal{X}$, let $U_{n,x}$ and $V_x$ be nonnegative random variables.
  If for all $x\in \mathcal{X}$ and $\varepsilon>0$,
  \[
   \lim_{n\to\infty} \mathbb{P}(U_{n,x} \geq V_x-\varepsilon)\ =\ 1 \quad \text{and}\quad \lim_{n\to\infty} \sum_{x\in \mathcal{X}} \mathbb{E}[U_{n,x}]\ =\ \sum_{x\in \mathcal{X}}\mathbb{E}[V_x]\ <\ \infty,
  \]
  then we have $\lim_{n\to\infty}\mathbb{E}\big[\sum_x \abs{U_{n,x} - V_x}\big]= 0$.
  Note that the first of the two conditions above is implied by the condition that $\liminf_{n\to\infty} U_{n,x} \geq V_x$ for all $x\in \mathcal{X}$.
\end{lem}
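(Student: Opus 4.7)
The plan is to reduce the claim to the convergence of a one-sided sum via the elementary identity $|a-b|=(a-b)^+ + (b-a)^+$ together with $(a-b)^+ - (b-a)^+ = a-b$, and then to apply dominated convergence twice: once in the probabilistic sense for fixed $x$, and once against the counting measure on $\mathcal{X}$.

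Concretely, for $n$ large enough that $\sum_x \mathbb{E}[U_{n,x}]<\infty$, I would write
\[
  \sum_{x\in \mathcal{X}} \mathbb{E}[\abs{U_{n,x}-V_x}]
  = \sum_{x\in \mathcal{X}} \mathbb{E}[(U_{n,x}-V_x)^+]
  + \sum_{x\in \mathcal{X}} \mathbb{E}[(V_x-U_{n,x})^+],
\]
and
\[
  \sum_{x\in \mathcal{X}} \mathbb{E}[U_{n,x}] - \sum_{x\in \mathcal{X}} \mathbb{E}[V_x]
  = \sum_{x\in \mathcal{X}} \bigl( \mathbb{E}[(U_{n,x}-V_x)^+] - \mathbb{E}[(V_x-U_{n,x})^+] \bigr),
\]
both of which are legitimate since $0\leq (V_x-U_{n,x})^+\leq V_x$ with $\sum_x \mathbb{E}[V_x] < \infty$ by assumption. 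The right-hand side of the second display tends to $0$ by hypothesis, so it is enough to prove that the negative-part sum $\sum_x \mathbb{E}[(V_x-U_{n,x})^+]$ tends to $0$; the positive-part sum then follows by subtraction, and together they yield the desired $L^1$-type convergence.

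To handle the negative-part sum I would proceed in two stages. First, fix $x\in \mathcal{X}$: the assumption $\mathbb{P}(U_{n,x}\geq V_x-\epsilon)\to 1$ for every $\epsilon>0$ is exactly the statement that $(V_x-U_{n,x})^+\to 0$ in probability, and the domination $(V_x-U_{n,x})^+\leq V_x$ with $\mathbb{E}[V_x]<\infty$ gives $\mathbb{E}[(V_x-U_{n,x})^+]\to 0$ via ordinary dominated convergence. Second, viewing $x\mapsto \mathbb{E}[(V_x-U_{n,x})^+]$ as a sequence of functions on $\mathcal{X}$ dominated termwise by the summable sequence $x\mapsto \mathbb{E}[V_x]$, a second dominated convergence against counting measure on $\mathcal{X}$ yields $\sum_x \mathbb{E}[(V_x-U_{n,x})^+]\to 0$.

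This is essentially a bookkeeping variant of Scheffé's lemma, so I do not expect a genuine obstacle; the only care needed is to ensure that the one-sided sums are finite before splitting, and that both dominated-convergence applications have integrable majorants. The closing remark of the statement is then immediate, since $\liminf_n U_{n,x}\geq V_x$ almost surely implies $\1\{U_{n,x}\geq V_x-\epsilon\}\to 1$ almost surely, hence in probability, for each $\epsilon>0$.
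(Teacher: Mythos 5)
Your proof is correct and essentially mirrors the paper's argument: the paper proves $\mathbb{E}[U_{n,x}\wedge V_x]\to\mathbb{E}[V_x]$ (which is the same statement as your $\mathbb{E}[(V_x-U_{n,x})^+]\to 0$, since $U_{n,x}\wedge V_x = V_x - (V_x-U_{n,x})^+$) and then uses the identity $\abs{a-b}=a+b-2(a\wedge b)$ together with the assumed convergence of $\sum_x\mathbb{E}[U_{n,x}]$, exactly as you do with the positive/negative-part decomposition. The only cosmetic difference is that the paper bounds $\mathbb{E}[U_{n,x}\wedge V_x]$ directly below via $\mathbb{E}[\1_{\{U_{n,x}\geq V_x-\eps\}}(V_x-\eps)]$ instead of invoking dominated convergence in probability, but this is the same estimate in disguise.
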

\begin{proof}
  First let us fix $x\in \mathcal{X}$ and show that $\mathbb{E}[U_{n,x} \wedge V_x]\to \mathbb{E}[V_x]$ as $n\to\infty$.
  We compute, for any $\varepsilon>0$:
  \begin{align*}
    \mathbb{E}[V_x]\ &\geq\ \mathbb{E}\big[U_{n,x} \wedge V_x\big]\\
     &\geq\ \mathbb{E}\Big[\mathds{1}_{\{U_{n,x}\geq V_x-\varepsilon\}}(V_x-\varepsilon)\Big]\\
     & =\ \mathbb{E}[V_x]-\varepsilon - \mathbb{E}\Big[\mathds{1}_{\{U_{n,x}< V_x-\varepsilon\}}(V_x-\varepsilon)\Big].
  \end{align*}
  Since $V_x-\varepsilon$ is integrable and $\mathbb{P}(U_{n,x}< V_x-\varepsilon)\to 0$, the last term above tends to $0$, and so $\liminf_{n} \mathbb{E}[U_{n,x} \wedge V_x] \geq \mathbb{E}[V_x]-\varepsilon$.
  Since $\varepsilon$ is arbitrary, we get $\mathbb{E}[U_{n,x} \wedge V_x]\to \mathbb{E}[V_x]$.
  Now, by dominated convergence we have $\sum_x \mathbb{E}[U_{n,x} \wedge V_x] \to C \coloneqq \sum_{x\in \mathcal{X}}\mathbb{E}[V_x]$, and we now just use the fact that $\abs{U_{n,x} - V_x} = U_{n,x}+V_x-2(U_{n,x} \wedge V_x)$ to get
  \[
  \lim_{n\to\infty}  \mathbb{E}\Big[\sum_{x\in \mathcal{X}} \abs{U_{n,x} - V_x}\Big] \ =\ C+C-2C\ =\ 0,
  \]
  concluding the proof.
\end{proof}

\subsection{Proof of (i) and (ii)}

 Proposition~\ref{prop:perron_frobenius} directly gives the first part of Theorem~\ref{thm:type_convergence} in the null recurrent setting. From now on we can assume $M$ is positive recurrent with $\langle\tilde{h},h\rangle =1$.

\subsubsection*{Ergodicity in mean} This paragraph is dedicated to proving convergence of the type vector in $L^1(\mathbb{B}_h)$, which corresponds to the second part (ii) in Theorem~\ref{thm:type_convergence}.
The proof relies on two intermediate lemmas. First and foremost, we recall that   
\[
\mathbb{E}\Big[\norm[\big]{\varrho^{-n} Z(n) - W\tilde{h}}_h\Big]\ =\
\mathbb{E}\Big[\sum_{x\in \mathcal{X}} \abs[\big]{\varrho^{-n}Z_x(n) - W\tilde{h}_x} h_x \Big].
\]

\begin{lem}[Restriction to a finite set of types]\label{lem:restriction_finite}
    For all $\varepsilon,\eta>0$, there exists a finite subset
    $\mathcal{X}_0\subset \mathcal{X}$ and $n_0\geq 1$ such that
    for all $n\geq n_0$,
    \begin{equation*}
        \mathbb{P}\big(\varrho^{-n}\langle Z(n), h|_{\mathcal{X}_0^c}\rangle\leq \varepsilon\big)\ \geq\ 1-\eta.
    \end{equation*}
\end{lem}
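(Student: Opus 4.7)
The plan is to control $\E[\rho^{-n}Z(n)h|_{\X_0^c}]$ via a many-to-one (spinal) identity and then apply Markov's inequality. Using $\E[Z_x(n)]=\sum_y Z(0)_y M^n_{yx}$ together with the telescoping identity $\rho^{-n}M^n_{yx}h_x = h_y\,\mathbb{P}_y(X_n=x)$, which is immediate from the definition $p_{yx}=M_{yx}h_x/(\rho h_y)$ of the spinal transitions, I would rewrite
\[
  \E\!\left[\rho^{-n}Z(n)h|_{\X_0^c}\right] = \sum_{y\in \X} Z(0)_y\, h_y\, \mathbb{P}_y(X_n\in \X_0^c).
\]

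In the positive recurrent case the spinal chain $X$ is aperiodic and positive recurrent with stationary probability distribution $\pi(x)=\tilde{h}_x h_x$, so for each fixed $y\in\X$ one has $\mathbb{P}_y(X_n\in \X_0^c)\to \pi(\X_0^c)$ as $n\to\infty$. Since the summands are uniformly bounded in $n$ by $Z(0)_y h_y$, summable because $Z(0)h<\infty$, dominated convergence gives
\[
  \E\!\left[\rho^{-n}Z(n)h|_{\X_0^c}\right] \longrightarrow Z(0)h\cdot \pi(\X_0^c) \quad\text{as } n\to\infty.
\]

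Given $\eps,\eta>0$, I would first choose a finite $\X_0\subset \X$ with $\pi(\X_0^c)<\eps\eta/(2 Z(0)h)$, which is possible since $\pi$ is a probability measure, and then an $n_0$ beyond which the expectation stays below $\eps\eta$. Markov's inequality then yields $\Prob(\rho^{-n}Z(n)h|_{\X_0^c}>\eps)<\eta$ for all $n\geq n_0$, which is the desired bound. The only delicate step is the interchange of the limit with the infinite sum over $y$, handled cleanly by DCT with envelope $Z(0)_y h_y$; I do not anticipate any other obstacle. Note that the positive recurrence of $M$ enters crucially twice here: to make $\pi$ a probability measure, and to guarantee the pointwise convergence $\mathbb{P}_y(X_n=\cdot)\to \pi$.
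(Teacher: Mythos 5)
Your proof is correct, and it takes a route that is related but meaningfully streamlined compared to the paper's. The paper establishes the stronger statement that the deterministic sequence $\E[\rho^{-n}Z(n)]$ converges to $Z(0)h\cdot\tilde{h}$ in the norm $\norm{\cdot}_h$, by combining Fatou's lemma (applied to $\rho^{-n}M^n\to h\tilde{h}$ from Proposition~\ref{prop:perron_frobenius}) with the Scheffé-type Lemma~\ref{lem:special-Scheffe}, and only then localizes to $\X_0^c$ and applies Markov's inequality. You instead rewrite $\E[\rho^{-n}Z(n)h|_{\X_0^c}]=\sum_y Z(0)_y h_y\,\mathbb{P}_y(X_n\in\X_0^c)$ via the Doob $h$-transform identity $\rho^{-n}M^n_{yx}h_x=h_y\,\mathbb{P}_y(X_n=x)$ (calling it a "telescoping identity" is slightly unusual terminology, but the formula is right), and then exploit the finiteness of $\X_0$ so that $\mathbb{P}_y(X_n\in\X_0)$ is a finite sum of converging terms; hence $\mathbb{P}_y(X_n\in\X_0^c)\to\pi(\X_0^c)$ with no Scheffé-type argument at all, and dominated convergence with the envelope $Z(0)_y h_y$ finishes. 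Both routes ultimately rest on the same ergodic input (Proposition~\ref{prop:perron_frobenius} is equivalent to convergence of the spinal chain to $\pi$, as noted in Section~\ref{sec:examples}), but yours is more economical for this lemma precisely because it uses the finiteness of $\X_0$ up front rather than proving a global $\norm{\cdot}_h$-convergence that is then projected down. The trade-off is that the paper's intermediate fact~\eqref{eq:cv-EZn-L1} is a reusable building block in the same style as what is needed again later in the proof of~(ii), whereas your argument is tailor-made for this lemma. One cosmetic gap: you should dispose of the trivial case $Z(0)h=0$ before dividing by it when choosing $\X_0$, but that is immediate.
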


\begin{proof}
  Recall that under Assumption~\ref{ass:recurrence}, by Proposition~\ref{prop:perron_frobenius}, for all $x,y\in \mathcal{X}$, we have $\varrho^{-n}(M^n)_{xy} \to h_x\tilde{h}_y$ as $n\to\infty$.
  Therefore, using Fatou's lemma, for all $y\in \mathcal{X}$,
  \[
    \liminf_{n\to\infty} \mathbb{E}\big[\varrho^{-n}Z_y(n)\big]\ =\ \liminf_{n\to\infty} \sum_{x\in \mathcal{X}}Z_x(0)\varrho^{-n}(M^{n})_{xy}\ \geq\ \sum_{x\in \mathcal{X}}Z_x(0)h_x\tilde{h}_y\ =\ \langle Z(0), h\rangle \tilde{h}_y.
  \]
  By the martingale property and the condition $\langle \tilde{h},h\rangle=1$, we also have
  \begin{equation*}
    \sum_{x\in \mathcal{X}} \mathbb{E}\big[\varrho^{-n}Z_x(n)\big]h_x\ =\ \mathbb{E}[W_n]\ =\ \langle Z(0), h\rangle \langle \tilde{h}, h\rangle\ =\ \norm[\big]{\langle Z(0), h\rangle\cdot \tilde{h}}_h.
  \end{equation*}
  Therefore, applying Lemma~\ref{lem:special-Scheffe} with the deterministic sequence $U_{n,x}\ =\ \mathbb{E}[\varrho^{-n}Z_x(n)]h_x$ and $V_x=\langle Z(0),h\rangle  \tilde{h}_xh_x$, we get
  \begin{equation}\label{eq:cv-EZn-L1}  
   \lim_{n\to\infty} \sum_{x\in \mathcal{X}} \abs[\Big]{\mathbb{E}\big[\varrho^{-n}Z_x(n)\big] - \langle Z(0), h\rangle\cdot \tilde{h}_x}h_x\ =\ 0,
  \end{equation}
  in other words we have the convergence $\mathbb{E}[\varrho^{-n}Z(n)]\to \langle Z(0), h\rangle\cdot\tilde{h}$  in $\mathbb{B}_h$ as $n\to\infty$.
  Now let $\varepsilon, \eta >0$ and $\mathcal{X}_0$ a finite subset of $\mathcal{X}$  such that $\langle Z(0), h|_{\mathcal{X}_0^c}\rangle\leq \varepsilon\eta/2$.
  Using~\eqref{eq:cv-EZn-L1} and the fact that $u\in \mathbb{B}_h \mapsto \langle u, h|_{\mathcal{X}_0^c}\rangle\in \mathbb{R}$ is continuous, we can find $n_0$ such that
\begin{equation*}
  \forall n\geq n_0,\ \mathbb{E}\big[\varrho^{-n}\langle Z(n), h|_{\mathcal{X}_0^c}\rangle\big]\ \leq\ \varepsilon\eta. 
\end{equation*}
  We finally get with Markov's inequality that 
  \begin{equation*}
    \mathbb{P}\big(\varrho^{-n}\langle Z(n), h|_{\mathcal{X}_0^c}\rangle\geq \varepsilon\big)\ \leq\ \eta,
  \end{equation*}
  which concludes the lemma.
\end{proof}

    \begin{lem}[Truncating the offspring distribution]\label{lem:cutoff}
     Let $\varrho_1,\varrho_2$ satisfying
     \begin{equation*}
      \varrho_1\in\ (\sqrt{\varrho},\varrho),\quad 
     1< \varrho_2< \varrho\quad\text{and}\quad
      \varrho_1^2/\varrho_2>\varrho.
     \end{equation*}
      Note that since $\varrho>1$, such $\varrho_1$ and $\varrho_2$ can indeed be found. For $n\geq 1$, introduce the capped mean matrix $\bar{M}(n)$ such that for every $x,y\in \mathcal{X}$,
     \begin{equation}\label{eq:matrix_cutoff}
        \bar{M}_{xy}(n)\ \coloneqq\ \mathbb{E}\big[L^{(x)}_y \wedge (\varrho_2^n h_y\tilde{h}^2_y)\big].
     \end{equation}
    Then almost surely, there exists $n_0\in \mathbb{N}$ such that for all $n\geq n_0$ and $y\in\mathcal{X}$,
\begin{equation*}
    Z_y(n+1) \ \geq\ \big(Z(n)^\transpose \bar{M}(n)\big)_y-\varrho_1^n\tilde{h}_y.
\end{equation*}
    \end{lem}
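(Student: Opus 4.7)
The plan is to recognize that the desired lower bound is a one-sided concentration inequality for a sum of independent truncated variables, conditional on the generation. For each individual $u$ alive at generation $n$, let $\hat L^u$ denote its offspring vector, and set the threshold $c_n(y):=\rho_2^n h_y \tilde h_y^2$. Define the truncated sum
\[
  S_{n,y} := \sum_{u\in G_n}\bigl(\hat L^u_y \wedge c_n(y)\bigr).
\]
Since clearly $Z_y(n+1)\geq S_{n,y}$, and since conditional on $\mathcal{F}_n$ the $\hat L^u$'s are independent with $\mathbb{E}[\hat L^u_y\wedge c_n(y)\mid \mathcal{F}_n]=\bar M_{x^u,y}(n)$, I get $\mathbb{E}[S_{n,y}\mid \mathcal{F}_n]=(Z(n)\bar M(n))_y$. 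Thus it suffices to prove that, almost surely, the centered quantity $S_{n,y}-\mathbb{E}[S_{n,y}\mid\mathcal{F}_n]$ is bounded below by $-\rho_1^n\tilde h_y$ simultaneously for every $y\in\mathcal{X}$ once $n$ is large.

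The second step is a conditional variance bound. Independence and the elementary inequality $(a\wedge c)^2\leq c(a\wedge c)$ give
\[
  \mathrm{Var}\bigl(S_{n,y}\,\big|\,\mathcal{F}_n\bigr) \;\leq\; \sum_{u\in G_n} \mathbb{E}\bigl[(\hat L^u_y\wedge c_n(y))^2\,\big|\,\mathcal{F}_n\bigr] \;\leq\; c_n(y)\,\mathbb{E}\bigl[Z_y(n+1)\,\big|\,\mathcal{F}_n\bigr].
\]
Applying Chebyshev conditionally and then taking expectations,
\[
  \mathbb{P}\bigl(Z_y(n+1)-(Z(n)\bar M(n))_y < -\rho_1^n\tilde h_y\bigr) \;\leq\; \frac{c_n(y)\,\mathbb{E}[Z_y(n+1)]}{\rho_1^{2n}\tilde h_y^2} \;=\; \frac{\rho_2^n h_y\,\mathbb{E}[Z_y(n+1)]}{\rho_1^{2n}}.
\]

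The final step is to sum this bound over $(n,y)$ and apply Borel--Cantelli. Crucially, I do not need a pointwise estimate on $\mathbb{E}[Z_y(n+1)]$: since $Mh=\rho h$ and $Z(0)h<\infty$, summing in $y$ first yields
\[
  \sum_{y\in\mathcal{X}} h_y\,\mathbb{E}[Z_y(n+1)] \;=\; \mathbb{E}[Z(n+1)h] \;=\; \rho^{n+1}Z(0)h,
\]
so that
\[
  \sum_{y\in\mathcal{X}}\mathbb{P}\bigl(Z_y(n+1)-(Z(n)\bar M(n))_y < -\rho_1^n\tilde h_y\bigr) \;\leq\; \rho\,Z(0)h\cdot\bigl(\rho_2\rho/\rho_1^2\bigr)^{n}.
\]
By the assumption $\rho_1^2/\rho_2>\rho$, the ratio $\rho_2\rho/\rho_1^2$ is strictly less than $1$, so the double sum over $n$ and $y$ is finite. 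By Borel--Cantelli, almost surely only finitely many pairs $(n,y)$ realize the bad event, and a (random) $n_0$ as claimed exists.

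The only subtle point is the need for the estimate to hold \emph{uniformly in} $y\in\mathcal X$ rather than for each fixed $y$; this is precisely why the threshold $c_n(y)$ is calibrated with both $h_y$ and $\tilde h_y^2$, so that the factor $\tilde h_y^2$ in Chebyshev's denominator is exactly cancelled and summation in $y$ reduces to $\sum_y h_y\mathbb{E}[Z_y(n+1)]$, which is controlled by the additive martingale. The choice of the exponents $\rho_1,\rho_2$ in the hypothesis is just what is needed to make the resulting geometric factor $(\rho_2\rho/\rho_1^2)^n$ summable, so no further work is required.
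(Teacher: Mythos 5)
Your proof is correct and follows essentially the same route as the paper's: lower-bound $Z_y(n+1)$ by the truncated sum, apply the conditional Bienaymé--Chebyshev inequality with the threshold $\rho_2^n h_y\tilde h_y^2$ chosen to cancel the $\tilde h_y^2$ in the denominator, use $Mh=\rho h$ to sum over $y$, and conclude by Borel--Cantelli. The only cosmetic difference is that the paper takes a union bound over $y$ for each $n$ and then applies Borel--Cantelli in $n$, whereas you sum over pairs $(n,y)$ jointly; these are equivalent.
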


\begin{proof}
  From the definition of the capped matrix, $\forall x,y$, $\bar{M}_{xy}(n)\leq M_{xy}$ and converges to $M_{xy}$ as $n\to\infty$.
  Let $y\in\mathcal{X}$ and $n\geq 0$ be fixed.
  By the branching property on $Z$, we can write
   \begin{align*}
    Z_{y}(n+1)\ &=\ \sum_{x\in\mathcal{X}}\sum_{i=1}^{Z_x(n)}L^{(x,i)}_y\\
&\geq\ \sum_{x\in\mathcal{X}}\sum_{i=1}^{Z_x(n)}\bar{L}^{(x,i)}_y, \quad \text{with }\bar{L}^{(x,i)}_y\ \coloneqq\ L^{(x,i)}_y \wedge (\varrho_2^n h_y\tilde{h}^2_y),
   \end{align*}
where the $(L^{(x,i)})_{i\geq 1,x\in \mathcal{X}}$ are independent, with each $L^{(x,i)}$ distributed as $L^{(x)}$. The $(\bar{L}^{(x,i)}_y)_{i\geq 1}$ have finite variance, such that 
\[
\mathrm{Var}\big(\bar{L}^{(x,i)}_y\big)\ \leq\ \mathbb{E}\big[\big(\bar{L}^{(x,i)}_y\big)^2\big]\ \leq\ \mathbb{E}\big[L^{(x,i)}_y\big]\varrho_2^n h_y\tilde{h}^2_y\ =\ M_{xy}\varrho_2^n h_y\tilde{h}^2_y.
\]
Now we can apply the Bienaymé-Chebyshev inequality to get
\begin{align}\nonumber
  \mathbb{P}\big(Z_y(n+1)\leq (Z(n)^\transpose \bar{M}(n))_y - \varrho_1^n\tilde{h}_y\big)
 \ & \leq\ \mathbb{P}\Big(\sum_{x\in\mathcal{X}}\sum_{i=1}^{Z_x(n)}\bar{L}^{(x,i)}_y\leq (Z(n)^\transpose \bar{M}(n))_y - \varrho_1^n\tilde{h}_y\Big) \\\nonumber
  &\leq\ \mathbb{E}\bigg[ \frac{1}{\varrho_1^{2n}\tilde{h}_y^2} \mathrm{Var}\Big(\sum_{x\in\mathcal{X}}\sum_{i=1}^{Z_x(n)}\bar{L}^{(x,i)}_y \;\big|\; \mathcal{F}_n \Big) \bigg]\\ \label{eq:lemma17_1}
  &\leq\ \sum_{x\in\mathcal{X} }\mathbb{E}\big[Z_x(n)\big]\frac{\mathrm{Var}(\bar{L}^{(x)}_y)}{\varrho_1^{2n}\tilde{h}^2_y}\\\nonumber
  &\leq\ \sum_{x\in\mathcal{X} }\mathbb{E}\big[Z_x(n)\big]M_{xy}h_y\Big(\frac{\varrho_2}{\varrho_1^2}\Big)^n.
\end{align}
By the union bound and harmonicity of $h$,
\begin{align*}
    \mathbb{P}\big(\exists y\in \mathcal{X},\ Z_y(n+1)\leq (Z(n)^\transpose \bar{M}(n))_y - \varrho_1^n\tilde{h}_y\big)\ &\leq\ \Big(\frac{\varrho_2}{\varrho_1^2}\Big)^n\sum_{x,y\in \mathcal{X}}\mathbb{E}\big[Z_x(n)\big]M_{xy}h_y\\
    &\leq\ \Big(\frac{\varrho_2\varrho}{\varrho_1^2}\Big)^n \varrho \cdot \langle Z(0), h\rangle.
\end{align*}
Since we picked $\varrho_1$ and $\varrho_2$ to have $\varrho_2\varrho/\varrho_1^2<1$, the previous bound is summable over $n$ and the result holds by Borel-Cantelli's lemma.
\end{proof}

Now let us prove point (ii) in Theorem~\ref{thm:type_convergence}. Take $\mathcal{X}_0$ as in Lemma~\ref{lem:restriction_finite} and fix $y\in \mathcal{X}$ and $\varepsilon>0$ for good.
Using Proposition~\ref{prop:perron_frobenius}, one can choose some $k\in\mathbb{N}^*$ such that the following holds:
\begin{equation}\label{eq:power_close_to_PF}
  \forall x\in \mathcal{X}_0,\ M^k_{xy}\ \geq\ (1-\varepsilon)\varrho^k h_x\tilde{h}_y.
\end{equation}
Using that $\tilde{h}$ is the left eigenvector associated with $\varrho$, Lemma~\ref{lem:cutoff} implies that almost surely, for all $n$ large enough, we have coordinate-wise that
\begin{align*}
    Z(n+k)^\transpose \ &\geq\ Z(n)^\transpose \bar{M}(n,k)-(\varrho^{k-1}+\varrho^{k-2}\varrho_1+\cdots + \varrho\varrho_1^{k-2} + \varrho_1^{k-1})\varrho_1^n\tilde{h}^\transpose\\
   & =\  Z(n)^\transpose \bar{M}(n,k)-(\varrho-\varrho_1)^{-1}(\varrho^k-\varrho_1^k)\varrho_1^n\tilde{h}^\transpose, \\
   & \geq\ Z(n)^\transpose \bar{M}(n,k)-(\varrho-\varrho_1)^{-1}\varrho^k\varrho_1^n\tilde{h}^\transpose,
\end{align*}
where $\bar{M}(n,k)\coloneqq\bar{M}(n)\bar{M}(n+1)\cdots \bar{M}(n+k-1)$.
Because $\bar{M}(n)$ increase coordinate-wise to $M$ as $n\to\infty$, so does $\bar{M}(n,k)$ to $M^k$.
We now fix $\eta>0$, and we may pick $n_0$ large enough to ensure that for all $n\geq n_0$,
\begin{equation*}
    \left\{ 
        \begin{array}{ll r}(\mathrm{a})&
       \mathbb{P}\big(\varrho^{- n}\langle Z(n),h|_{\mathcal{X}_0^c}\rangle \leq \varepsilon\big)\ \geq\ 1-\eta  & (\text{by Lemma~\ref{lem:restriction_finite}});\\[5mm]
        (\mathrm{b})&
       \forall x\in \mathcal{X}_0,\ 
    \bar{M}_{xy}(n,k)\ \geq\ (1-2\varepsilon)\varrho^kh_x\tilde{h}_y & (\text{$\mathcal{X}_0$ finite and using~\eqref{eq:power_close_to_PF}});\\[5mm]
     (\mathrm{c}) &
    \begin{multlined}[t]\mathbb{P}\big(\forall n\geq n_0,\ Z(n+k)^\transpose > Z(n)^\transpose \bar{M}(n,k) - (\varrho-\varrho_1)^{-1}\varrho^k\varrho_1^n\tilde{h}^\transpose \big)\\[-2mm] \geq\ 1-\eta\end{multlined} \hspace{-7mm} & (\text{by Lemma~\ref{lem:cutoff}});\\[5mm]
    (\mathrm{d})&
  \displaystyle (\varrho-\varrho_1)^{-1}\Big(\frac{\varrho_1}{\varrho}\Big)^{n}\ <\ \varepsilon  & (\text{since $\varrho_1<\varrho$});\\[5mm]
   (\mathrm{e})&
\mathbb{P}\big(\forall n \geq n_0,\ \abs{\varrho^{-n}\langle Z(n), h\rangle-W}\leq \varepsilon\big)\ \geq\ 1-\eta & (\text{a.s. convergence of $W_n$}\big).
\end{array}\right.
\end{equation*}
Let $n\geq n_0$. By $(\mathrm{d})$, we have
\begin{multline*}
\big\{ Z(n+k)^\transpose \ \geq\ Z(n)^\transpose\bar{M}(n,k)-(\varrho-\varrho_1)^{-1}\varrho^k\varrho_1^n\tilde{h}^\transpose\big\} \\\subset\ \Big\{ \frac{Z(n+k)^\transpose}{\varrho^{(n+k)}}\ \geq\ \varrho^{-(n+k)}Z(n)^\transpose\bar{M}(n,k)-\varepsilon \tilde{h}^\transpose \Big\}.
\end{multline*}
Thus using $(\mathrm{c})$, the probability of the latter event is at least $1-\eta$.
Furthermore by $(\mathrm{b})$, we have $(Z(n)^\transpose \bar{M}(n,k))_y \geq \varrho^k (1-2\varepsilon)\big(\sum_{x\in \mathcal{X}_0} Z_x(n)h_x\big)\tilde{h}_y = \varrho^k (1-2\varepsilon)\langle Z(n),h|_{\mathcal{X}_0}\rangle  \tilde{h}_y$.
Whence we derive that
\begin{equation*}
    \mathbb{P}\Big( \frac{Z_y(n+k)}{\varrho^{n+k}}\ \geq\ \big( (1-2\varepsilon)\varrho^{-n}\langle Z(n),h|_{\mathcal{X}_0}\rangle-\varepsilon\big) \tilde{h}_y \Big)\ \geq\ 1-\eta.
\end{equation*} 
Now, by $(\mathrm{a})$ and $(\mathrm{e})$, we have $\mathbb{P}\big(\abs{\varrho^{-n}\langle Z(n),h|_{\mathcal{X}_0}\rangle-W}\leq 2\varepsilon \big)\geq 1- 2\eta$.
It follows that
\[
  \mathbb{P}\big( \varrho^{-(n+k)}Z_y(n+k)\ \geq\ ( (1-2\varepsilon)(W-2\varepsilon)-\varepsilon) \tilde{h}_y \big)\ \geq\ 1-3\eta,
\]
and since $\varepsilon$ and $\eta$ are arbitrary, standard arguments yield:
\begin{equation*}
   \forall \varepsilon > 0, \qquad \mathbb{P}\big(\varrho^{-n}Z_y(n)\geq W\tilde{h}_y - \varepsilon\big)\ \underset{n\to\infty}{\longrightarrow}\ 1.
\end{equation*}

Combined with the fact that $\mathbb{E}\big[\sum_{y}\varrho^{-n}Z_y(n)h_y\big] = \mathbb{E}[W_n]=\mathbb{E}[W] = \mathbb{E}[\sum_{y}W\tilde{h}_yh_y]$, applying Lemma~\ref{lem:special-Scheffe} to $U_{n,x}=\varrho^{-n}Z_x(n)h_x$ and $V_x=W\tilde{h}_xh_x$ we get that
\[
 \lim_{n\to\infty} \mathbb{E}\bigg[\sum_{x\in\mathcal{X}}\abs[\Big]{\frac{Z_x(n)}{\varrho^n} - W\tilde{h}_x}h_x \bigg] \ =\ 0,
\]
which concludes the proof of (ii).

\subsection{Proof of (iii)}
\subsubsection*{(iii)-\ref{ass:variance_condition}: Offspring distributions with finite variance}
Assume $\forall x,y\in \mathcal{X}$, $\mathrm{Var}(L_y^{(x)})<\infty$ and satisfy
\begin{equation*}
  \sum_{x,y\in \mathcal{X}}\tilde{h}_x \mathrm{Var}(L_y^{(x)}) \tilde{h}_y^{-2}\ <\ \infty.
\end{equation*}
Then, we can improve Lemma~\ref{lem:cutoff} using the mean matrix directly in place of the capped matrix $\bar{M}$:
following the same steps as in the proof of
Lemma~\ref{lem:cutoff} but without replacing $L^{(x)}_y$ by $\bar{L}^{(x)}_y$, we obtain the analogue of~\eqref{eq:lemma17_1}, that is
\[
\mathbb{P}\big(Z_y(n+1)\leq (Z(n)^\transpose M)_y - \varrho_1^n\tilde{h}_y\big) \; \leq \; \sum_{x\in\mathcal{X} }\mathbb{E}\big[Z_x(n)\big]\frac{\mathrm{Var}(L^{(x)}_y)}{\varrho_1^{2n}\tilde{h}^2_y},
\]
so summing over $y\in \mathcal{X}$ and using a union bound, we get
\begin{align*}
  \mathbb{P}\big(\exists y\in\mathcal{X},\ Z_y(n+1)\leq (Z(n)^\transpose M)_y - \varrho_1^n\tilde{h}_y\big)
 \ & \leq\ \sum_{x,y\in \mathcal{X}} \mathbb{E}[Z_x(n)] \frac{\mathrm{Var}(L_{y}^{(x)})}{\rho_1^{2n}\tilde{h}_y^2} \\
 &\leq\ \Big(\frac{\rho}{\rho_1^2}\Big)^n \langle Z(0),h\rangle \sum_{x,y\in \mathcal{X}} \tilde{h}_x\mathrm{Var}(L_{y}^{(x)})\tilde{h}_y^{- 2}.
\end{align*}
By assumption, the last sum is a finite constant, so summing over $n$ and using $\varrho_1>\sqrt{\varrho}$, the Borel--Cantelli lemma applies and we obtain that almost surely, for all $n$ large enough and all
$y\in \mathcal{X}$,
\[
  Z_y(n+1)\ \geq\ (Z(n)^\transpose M)_y - \varrho^n_1\tilde{h}_y,\quad \forall k\in\mathbb{N}.
\]
Multiplying on the right by $M$ iteratively, an easy induction shows that almost surely, for all $n$ large enough, all $k\geq 1$ and all
$y\in \mathcal{X}$,
\begin{align*}
  Z_y(n+k)\ &\geq\ (Z(n)^\transpose M^k)_y - \sum_{i=1}^k \varrho^{i-1} \varrho_1^{n+k-i} \tilde{h}_y\\
  &\geq\ (Z(n)^\transpose M^k)_y - (\varrho-\varrho_1)^{-1}\varrho^k\varrho^n_1\tilde{h}_y.
\end{align*}
This implies that for $n$ large enough, we have for any $y\in \mathcal{X}$,
\[
\frac{Z_y(n+k)}{\varrho^{n+k}}\ \geq\ \Big(\frac{Z(n)^\transpose}{\varrho^n} \frac{M^k}{\varrho^k}\Big)_y- (\varrho-\varrho_1)^{-1}\Big(\frac{\varrho_1}{\varrho}\Big)^n\tilde{h}_y,\quad \forall k\in\mathbb{N}.
\]
Because $\varrho^{-k}M^k\to h\tilde{h}^\transpose$ and $\varrho_1<\varrho$, it is not hard to see that for any $v\in \mathbb{B}_h$, we have $\liminf_k\varrho^{-k}(v^\transpose M^k)_y \geq (v^\transpose  h \tilde h^\transpose )_y = \langle v,h\rangle  \tilde{h}_y$.
So by letting $k\to \infty$, we obtain that for $n$ large enough, 
\[
  \liminf_{m\to \infty}\frac{Z_y(m)}{\varrho^{m}}\ \geq\  \frac{\langle Z(n),h\rangle}{\varrho^n} \tilde{h}_y\ =\ W_n\tilde{h}_y,
\]
and since the left-hand side does not depend on $n$, we derive simply letting $n\to\infty$ that $\liminf_{m}\varrho^{-m}Z_y(m) \geq W\tilde{h}_y$. The conclusion holds again by applying Scheffé's lemma for nonnegative functions, using the fact that $\sum_yW\tilde{h}_yh_y = W = \lim_n \sum_y \varrho^{-n}Z_y(n)h_y$.

\subsubsection*{(iii)-\ref{ass:weaker_entropy_condition}: Entropy condition}
The key will again be to show that for all $y\in \mathcal{X}$,
\begin{equation}\label{eq:asCV-goal}
  \liminf_{m\to \infty}\varrho^{-m}Z_y(m)\ \geq\ W\tilde{h}_y,
\end{equation}
and conclude as above.
Let us first justify that we can assume without loss of generality that the initial population $Z(0)$ is finite.
In the general case, note that for any $\mathcal{X}_0\subset \mathcal{X}$, we can decompose $Z(n)=Z^{\mathcal{X}_0}(n)+Z^{\mathcal{X}_0^c}(n)$, where $(Z^{\mathcal{X}_0}(n))_{n\geq 0}$ and $(Z^{\mathcal{X}_0^c}(n))_{n\geq 0}$ are versions of the branching process started respectively from $Z(0)|_{\mathcal{X}_0}$ and $Z(0)|_{\mathcal{X}^c_0}$.
Note that we can similarly write $W=W^{\mathcal{X}_0}+W^{\mathcal{X}_0^c}$ with obvious definitions, and it is not hard to see that $W^{\mathcal{X}_0^c}\to 0$ as $\mathcal{X}_0\uparrow \mathcal{X}$.
Therefore, if~\eqref{eq:asCV-goal} is shown to hold for any finite initial population, then in the general case we have that for all $y\in\mathcal{X}$, $\liminf_{m}\varrho^{-m}Z_y(m) \geq \sup_{\mathcal{X}_0}W^{\mathcal{X}_0}\tilde{h}_y = W\tilde{h}_y$, where the supremum is taken on finite $\mathcal{X}_0\subset \mathcal{X}$, and the proof will be complete.
From now on, we therefore assume that $Z(0)$ is finite.

Recall the capped matrix $\bar{M}(n)$ from~\eqref{eq:matrix_cutoff}, defined for each $n\in \mathbb{N}$.
From Lemma~\ref{lem:cutoff}, we have for all $y\in\mathcal{X}$ that $Z(n+1)_y \geq (Z(n)^\transpose \bar{M}(n))_y - \varrho_1^n\tilde{h}_y$ almost surely for all $n$ large enough, which we can rewrite as (note that each of the following terms is nonnegative)
\[
  \big(Z(n)^\transpose M\big)_y\ \leq\ Z_y(n+1) + \big(Z(n)^\transpose \big(M-\bar{M}(n)\big)\big)_y + \varrho_1^n\tilde{h}_y.
\]
Iterating the inequality above by multiplying by $M$ on the right, we obtain that for $n$ large enough, for all $y\in \mathcal{X}$ and for all $k\in \mathbb{N}$,
\begin{multline}\label{eq:ineq_iii_c}
  \frac{(Z(n)^\transpose M^k)_y}{\varrho^{n+k}}\ \leq\  \frac{Z_y(n+k)}{\varrho^{n+k}}\\ +\ \sum_{i=0}^{k-1}\bigg(\frac{Z(n+i)^\transpose}{\varrho^{n+i}}(M-\bar{M}(n+i))\frac{M^{k-i-1}}{\varrho^{k-i}}\bigg)_y + (\varrho-\varrho_1)^{-1}\Big(\frac{\varrho_1}{\varrho}\Big)^n\tilde{h}_y.
\end{multline}
We will first take the limit $k\to \infty$ and then $n\to \infty$ in this inequality to obtain $\liminf_{m}\varrho^{-m}Z_y(m) \geq W\tilde{h}_y$ for all $y\in \mathcal{X}$.
As $\varrho_1<\varrho$, the term $(\varrho-\varrho_1)^{-1}(\varrho_1/\varrho)^n\tilde{h}_y$ disappears in the second limit $n\to\infty$, so the delicate term is the sum $R_{n,k}\coloneqq \sum_{i=0}^{k-1}\frac{Z(n+i)^\transpose}{\varrho^{n+i}}\big(M-\bar{M}(n+i)\big)\frac{M^{k-i-1}}{\varrho^{k-i}}$.
First, note that
\[
   \norm{R_{n,k}}_h\ =\ \langle R_{n,k},h\rangle \ =\ \frac{1}{\varrho}\sum_{i=0}^{k-1} \frac{Z(n+i)^\transpose}{\varrho^{n+i}}\big(M-\bar{M}(n+i)\big) h,
\]
which is non-decreasing in $k$, and tends to
\[
  \lim_{k\to \infty}\norm{R_{n,k}}_h \ \eqqcolon\ \mathcal{R}_n \ =\ \frac{1}{\varrho}\sum_{i\geq n} \frac{Z(i)^\transpose}{\varrho^{i}}\big(M-\bar{M}(i)\big) h.
\]
Now $(\mathcal{R}_n)$ is nonincreasing, so to show that it converges almost surely to $0$ it suffices to show that $\mathbb{E}[\mathcal{R}_n]\to 0$.
Since we have assumed that $Z(0)$ has a finite number of positive coordinates, there exists $\alpha > 0$ such that $Z(0) \leq \alpha \tilde{h}$ holds coordinate-wise, which implies that $\mathbb{E}[Z_y(i)] =(Z(0)^\transpose M^i)_y\leq \alpha \varrho^i \tilde{h}_y$ for all $y\in \mathcal{X}$.
Using this, we compute
\begin{align}\nonumber
  \mathbb{E}\big[ \mathcal{R}_n\big]\ &=\ \frac{1}{\varrho}\sum_{i\geq n}\mathbb{E}\Big[\frac{Z(i)^\transpose}{\varrho^{i}}\big(M-\bar{M}(i)\big)h \Big] \\
  &\leq\  \frac{\alpha}{\varrho}\sum_{i\geq n} \tilde{h}^\transpose\big(M-\bar{M}(i)\big)h. \label{eq:rest_cv_series}
\end{align}
Let us show that this is the remainder of a convergent series, \textit{i.e.} that the following converges:
\begin{align*}
  \sum_{i\geq 1} \tilde{h}^\transpose\big(M-\bar{M}(i)\big)h\ &=\ \sum_{x,y\in \mathcal{X}}\tilde{h}_x \Big( \sum_{i\geq 1} \big(M-\bar{M}(i)\big)_{xy} \Big) h_y. \\
  &=\ \sum_{x,y\in \mathcal{X}}\tilde{h}_x\mathbb{E}\Big[ \sum_{i\geq 1} \left( L^{(x)}_y - \varrho_2^i h_y\tilde{h}_y^2 \right)_+\Big]h_y. 
\end{align*}
Now notice that for any $a,b>0$,
\[
\sum_{i\geq 1}\left(a - \varrho_2^i b \right)_+ \ \leq\ a \abs[\big]{\{i \in \mathbb{N} : \varrho_2^ib \leq a\}}\ =\ a\floor[\Big]{\frac{\log_+(a/b)}{\log \varrho_2}} \ \leq\ a\frac{\log_+(a/b)}{\log \varrho_2}.
\]
Therefore, we obtain
\[
  \sum_{i\geq 1} \tilde{h}^\transpose \big(M-\bar{M}(i)\big)h \ \leq\ \frac{1}{\log \varrho_2} \sum_{x,y\in \mathcal{X}} \tilde{h}_x \mathbb{E}\Big[L^{(x)}_y\log_+\big( L^{(x)}_y h_y^{-1}\tilde{h}_y^{-2} \big) \Big] h_y.
\]
It results that Condition~(iii)-\ref{ass:weaker_entropy_condition} in Theorem~\ref{thm:type_convergence} implies the convergence of the series $\sum_{i\geq 1} \tilde{h}^\transpose \big(M-\bar{M}(i)\big)h$, hence the fact that $\mathcal{R}_n\to 0$ almost surely.

This means $\lim_{n\to\infty}\lim_{k\to \infty}\norm{R_{n,k}}_h = 0$ almost surely, and so we can take the limits in~\eqref{eq:ineq_iii_c} to establish $\liminf_n \varrho^{-n}Z(n) \geq W\tilde{h}$, and conclude as in the proof of~(iii)-\ref{ass:variance_condition}.

It remains to retrieve~(iii)-\ref{ass:weaker_entropy_condition} from the conditions stated in the last part of Theorem~\ref{thm:type_convergence}, namely: we assume Condition~\eqref{eq:simple_condition} and $-\sum_{x\in \mathcal{X}} \tilde{h}_xh_x \log\big( \tilde{h}_xh_x\big) < \infty$.
Recalling from $\langle \tilde{h}, h\rangle =1$ that $-\log(\tilde{h}_yh_y) \geq 0$ for all $y\in \mathcal{X}$, we compute:
\begin{align*}
\sum_{x,y\in \mathcal{X}} \tilde{h}_x \mathbb{E}\Big[L^{(x)}_y&\log_+\big( L^{(x)}_y h_y^{-1}\tilde{h}_y^{-2} \big) \Big] h_y \\
&\leq\ 
\sum_{x,y\in \mathcal{X
}} \tilde{h}_x \mathbb{E}\Big[L^{(x)}_y\log_+\big( L^{(x)}_yh_y \big) \Big] h_y
-2 \sum_{x,y\in \mathcal{X}} \tilde{h}_x \mathbb{E}\Big[L^{(x)}_y \log\big( \tilde{h}_yh_y \big) \Big] h_y \\
&\leq\ 
\sum_{x\in \mathcal{X}} \tilde{h}_x \mathbb{E}\Big[\big\langle L^{(x)},h\big\rangle\log_+\big\langle L^{(x)},h\big\rangle \Big]
- 2 \varrho \sum_{y\in \mathcal{X}} \tilde{h}_yh_y \log\big( \tilde{h}_yh_y\big),
\end{align*}
where we have used the equality $\sum_x \tilde{h}_x \mathbb{E}[L^{(x)}_y] = (\tilde{h}^\transpose M)_y = \varrho \tilde{h}_y$.
This completes the proof of Theorem~\ref{thm:type_convergence}.

\section{Application to general models of epidemics}\label{sec:applications}

This final section is dedicated to the study of large-population
limits of some general discrete-time epidemics processes.

\subsection*{Motivation and related work}
Individuals are often not equally infectious throughout the course of
an infection. Viral load, symptom severity, and behavioral changes
vary over time and lead to different transmission potentials at
different stages of the disease. Likewise, susceptibility to infection
may depend on the stage of infection of the transmitting individual,
reflecting differences in pathogen shedding, contact intensity, or
immune response. Incorporating such stage-dependent heterogeneity
allows the model to capture biological features. Thus, it is standard
in epidemic modeling to assume that the infectivity of an individual
depends on how long that individual has been infected, such that the
model is structured in age. We aim to describe the onset of an
epidemic in a non-Markov, varying-susceptibility epidemic model.
 
First consider a population of size $N\in \mathbb{N}$ where
individuals are either \emph{susceptible} or \emph{infected}. The
epidemics process evolves in time: any individual $i$ infected at time
$t$ carries a random point process
$\mathcal{P}_i=\sum_{k\geq 1} \delta_{t_k}$ that describes its
infectious contacts after time $t$: for each $k$, at time $t+t_k$, an
individual is uniformly chosen in the population and becomes infected
if it was susceptible up to this time. Under the assumption that the
$(\mathcal{P}_i)$ are i.i.d.\ and technical assumptions on their
distribution, \citet{BR13} showed a functional law of large numbers
for the size of the epidemics as a function of time, as $N\to \infty$,
and the limit is described by a general Kermack--McKendrick
integro-differential equation \citep{KM27}. A similar result appears
in \citet{duchamps_general_2023}, where we relaxed technical
assumptions and considered the case of a varying contact rate function
modeling the effect of public intervention on the spread of the
epidemic. We refer to~\cite{forien2022recentadvancesepidemicmodeling}
for a review of some Markovian and non-Markovian stochastic models of
epidemics. In these works, individuals remain infectious during
arbitrarily distributed periods of time, but there is no varying
susceptibility among the population. We propose a framework that
introduces some heterogeneity in the population -- consider that
individuals have different behaviors regarding contacts with infected
individuals at different stages of the infection (for instance
healthcare workers may be more regularly in contact with advanced
stages of infections, and also may spread the infection differently
than non-healthcare workers). In the following framework, our aim is
to couple the beginning of the epidemics with a multi-type branching
process, which will allow us to exploit our main results.

An important part of these works consists in understanding the
beginning of the epidemics, which is generally approximated by a
Crump--Mode--Jagers process \citep{jagers_general_2008}. However, this
is not sufficient to describe the whole trajectory of the epidemics:
the study of a backwards-in-time process of potential
``infectors'' is also important to study. Interestingly, the
backwards-in-time process in \citet{duchamps_general_2023} is more
regular than its forwards-in-time counterpart: the uniform choice of
infection implies there is additional independence when going back
through potential infectors.

In this application section, the heterogeneity we introduce in the
population will prevent this additional regularity in the
backwards-in-time process: both branching processes will be
countable-type branching processes in the large-population
approximation. We now describe the epidemics model through its
so-called infection graph $\mathcal{G}_N$, which in broad terms is the
weighted directed graph or directed network (weights are on the edges,
and we will in fact call them \emph{lengths}) with vertex set
$\{1,\dots,N\}$ that is built by drawing an edge $i\to j$ with length
$x$ if individual $i$ would infect $j$ at infection-age $x$. The
infection started from individual $1$ is then described by the
subgraph composed of all vertices that are reachable from $1$.

The infection graph $\mathcal{G}_N$ corresponding to this model is
effectively a ``layering'' of a countable number of directed
configuration models on $\{1, \dots, N\}$, where the edges have
integer-valued lengths. Shortest paths in configuration models with
edge lengths have been extensively studied \citep[see
\emph{e.g.}][]{BHH10,bhamidi_front_2014,vO18}, but never in such a
layered setting where countable-type branching processes
emerge. Indeed, in order to study the distribution of geodesics (as
$N\to\infty$) in this random graph, we need to understand the start of
the epidemics. Note that here we only focus on the branching
approximation, the study of geodesics in this model should be the
subject of future work.

\subsection*{Layered configuration model}
We thus model the infection graph of the epidemic as
follows: we endow every individual $i\in[N]$ with a pair of sequences
$((D_{i}^{+}(x))_{x\geq 1}, (D_{i}^{-}(x))_{x\geq 1})$; $D^+_i(x)$
(resp.\ $D^-_i(x)$) is the type-$x$ out-degree (resp.\ in-degree) of
vertex $i$, and independently for each $x\geq 1$ a directed
configuration model is drawn using type-$x$ in- and out-degrees. Each
edge formed by joining two type-$x$ half-edges is assigned length $x$.
In this construction, $D^+_i(x)$ represents the infectious contacts
that individual $i$ makes at infection-age $x$, and $D^-_i(x)$
represents the susceptibility of $i$ to infectious contacts by
infected individuals with infection-age $x$. Similar models in
continuous-time (but without varying susceptibility) in
\cite{duchamps_general_2023} are shown to encompass compartmental
models of epidemics.

Let us now make this construction more precise by introducing some
notation and some assumptions on the degree sequences. The directed
configuration model (DCM) is a model of random graph introduced
by~\cite{bollobas_probabilistic_1980}, whose construction is best
described algorithmically by assigning directed half-edges (in the
following, we will call them \emph{instubs} and \emph{outstubs} for
short) to vertices and matching them uniformly at random (see
Figure~\ref{fig:illustration_singletype_DCM} for an illustration). Let
us directly define our so-called layered version:
\begin{itemize}
\item Let $(D^{+},D^-)=((D^{+}(x))_{x\geq 1},(D^-(x))_{x\geq 1})=$ be
  a random variable taking values in the pairs of sequences of
  non-negative integers, and let $(D^+_i,\ D^-_i)_{i\geq 1}$ be an
  i.i.d.\ sequence of copies of $(D^{+},D^-)$.
\item For each $i\in \mathbb{N}$ and $x\geq 1$, we attach to vertex
  $i$ a number $D^+_i(x)$ of outstubs of type $x$, and a number
  $D^-_i(x)$ of instubs of type $x$.
\item For a fixed $N\geq 1$, we add a graveyard vertex denoted by
  $\dagger$, and such that for every type $x\in\mathbb{N}$ the
  $x$-degree of $\dagger$ correspond to the extra unmatched stubs.
  That is,
  \[
    D_{\dagger}^{+}(x)\ =\ \Big(\sum_{i=1}^N D^{+}_{i}(x) -
    \sum_{i=1}^N D^{-}_{i}(x)\Big)_+,
  \]
  and likewise for $D_{\dagger}^{-}(x)$. Note that we do not write the
  dependence on $N$ here to keep the notation short.
\item Independently for each $x\geq 1$, choose a uniform bipartite
  matching of all $x$-instubs with all $x$-outstubs among those of
  vertices indexed by $\{1,\dots,N, \dagger\}$. The directed
  network $\mathcal{G}_N$ is built by drawing a directed edge with
  length $x$ for each matched pair of $x$-stubs (the directed edge
  goes from the vertex attached to the outstub and towards the vertex
  attached to the matched instub).
\end{itemize}
Note that equivalently (and we will use this in the following), one
can realize this last step algorithmically as follows: pick an
arbitrary type $x$ and an arbitrary unmatched $x$-instub or
$x$-outstub; match it uniformly at random among the remaining
possibilities; repeat these steps until all stubs are matched (which
may take an infinite time as there may be countably many stubs to be
matched).
\begin{figure}[h]
    \centering
    \includegraphics[width=0.95\textwidth]{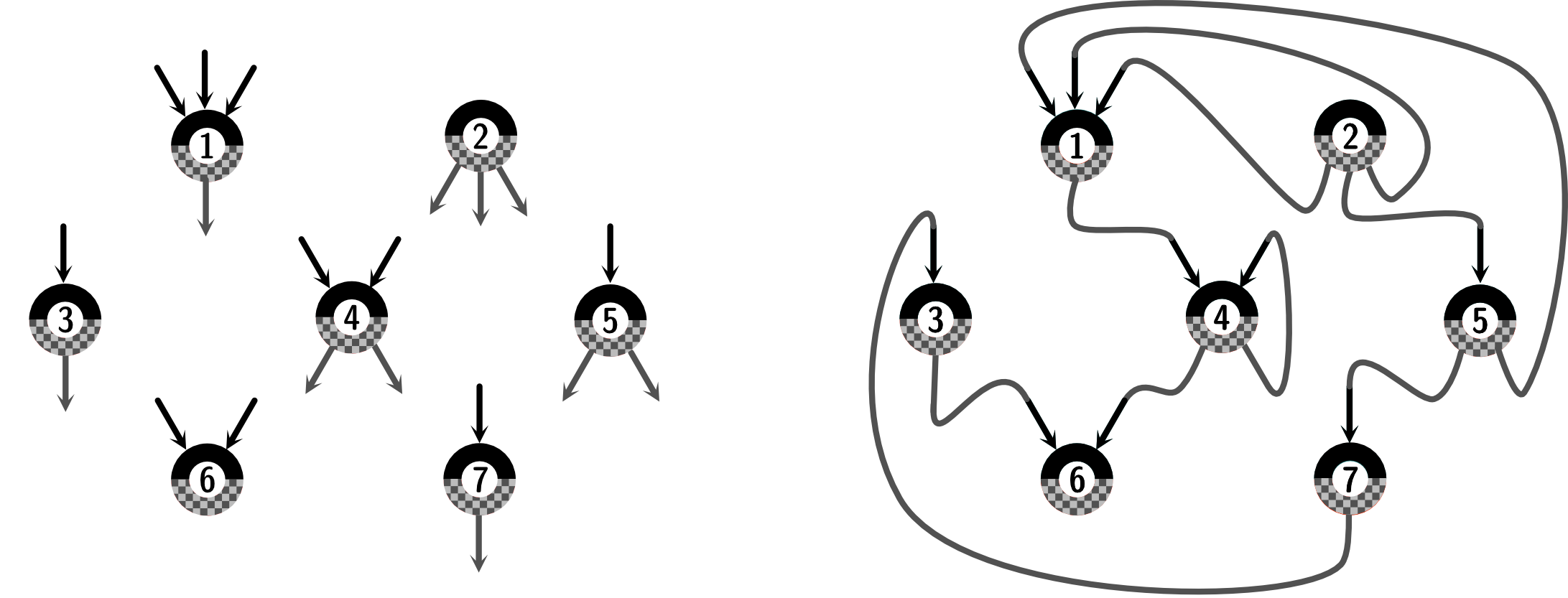}
    \caption{The DCM on $N=7$ vertices. The right panel shows a possible outcome for the matching.}
    \label{fig:illustration_singletype_DCM}
\end{figure}
Note that $\mathcal{G}_N$ may have loops and multi-edges, and also
vertices may have infinite total degrees. However, we are interested
in the lengths of directed paths in this graph: from any focal vertex
$i\in [N]$ and for any fixed radius $r\geq 1$, there are only finitely
many paths (succession of directed edges) of total length less than
$r$ in $\mathcal{G}_N$.

Recall that the directed paths in the graph $\mathcal{G}_N$ model the
potential infections that may occur in a non-Markovian, discrete-time
epidemic with heterogeneous susceptibilities in the population. The
length of an edge $i\to j$ is the time needed after the infection of
$i$ for $j$ to become infected as a result of a contact with $i$. We
are typically interested in how the infection spreads from a focal
initially infected vertex, or conversely how it may reach this focal
vertex from potential sources of infection. By exchangeability, we use the vertex
indexed by $1$ as the focal vertex. Writing $\vec{d}(i,j)$ for the
minimal length of a directed path between two vertices $i$ and $j$, we
define, for $t\geq 1$,
\[
  \mathcal{V}_N(t) \ \coloneqq\ \big\{i\in [N]:\ \vec{d}(1,i) \leq t\ \text{ or }\
  \vec{d}(i,1) \leq t\big\}
\]
the set of vertices that are at a directed distance of $1$ that is
less than $t$ in $\mathcal{G}_N$. We are interested in the rooted
decorated directed network $\mathcal{G}_N|_{t}$ whose vertex set is
$\mathcal{V}_N(t)$, with a root indexed by $1$, whose edge set is that
induced by $\mathcal{G}_N$, and where vertices are additionally
decorated with their total number of in- and outstubs of all types.
More precisely, we consider rooted decorated directed networks up to
isomorphism, \textit{i.e.} we forget about the labeling of vertices in
$\mathcal{G}_N|_{t}$. We refer to Figure~\ref{fig:exploration} for an
illustration of the part of $\mathcal{G}_N|_t$ accessible from the
root.
\begin{figure}[h]
    \centering
    \includegraphics[width=0.6\textwidth]{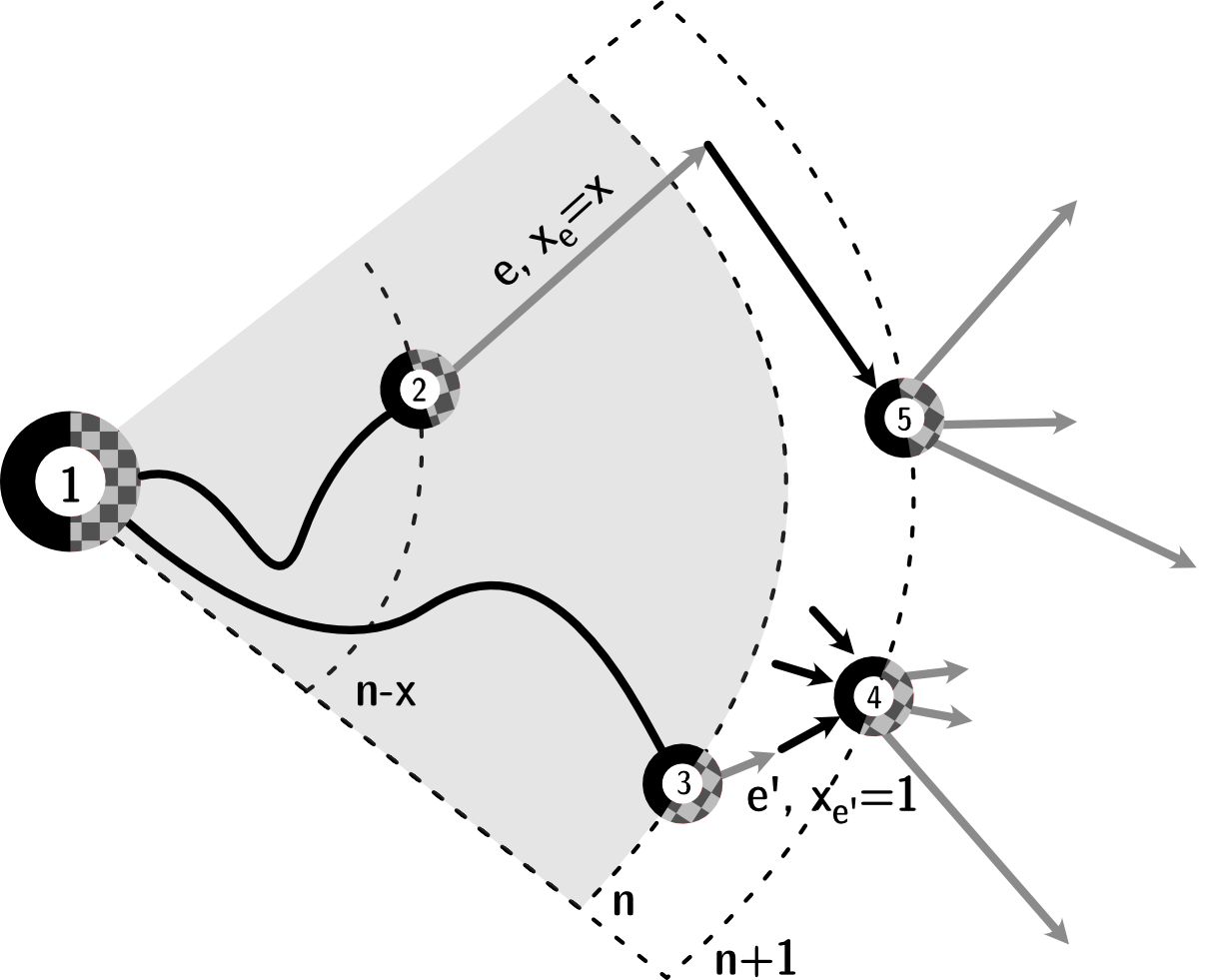}
    \caption{The out-exploration process of $\mathcal{G}$ from a
      source vertex. To find vertices at distance $n+1$ from the
      source, we traverse stubs with type $x$ that were added to the process at
      time $n-x+1$. The unmatched stubs in this picture are those going
      over the time horizon $n+1$, and those going backward from
      explored vertices.}
    \label{fig:exploration}
\end{figure}

We will be able to use Theorems~\ref{thm:ks_condition}
and~\ref{thm:type_convergence} to describe the distribution of
potential infectious contacts at time $t_N=\Theta(\log N)$ from the
focal vertex, as $N\to \infty$ and under some additional assumptions.
Among those, we impose the following moment conditions:
\begin{equation}\label{eq:moments}
  \mathbb{E}\big[(D^{\pm}_x)^2\big]\ <\ \infty,\qquad
  \mathbb{E}[D_x^-]\ =\ \mathbb{E}[D_x^+]\ =\ m_x, \qquad
  x\geq 1.
\end{equation}
In the following, we will let $\sigma_{x,\pm}^2$ denote the variance of
$D_x^{\pm}$.

\subsection*{The branching process approximation}

We will exploit our results by coupling the neighborhood
$\mathcal{G}_N|_{t}$ of the focal vertex in the infection graph
$\mathcal{G}_N$ with a similar neighborhood in a tree-like directed
network that is built from the $t$ first steps of a specific
multi-type branching process. Let us first describe the tree using a
recursive random construction:
\begin{itemize}
\item We first start with $\mathcal{T}_0$, composed of a single vertex
  (the root) labeled by $1$ and endowed with $D_1^\pm(x)$ in- and
  outstubs of type $x$, for all $x\geq 1$, where
  $(D_1^{\pm}(x))_{x\geq 1}$ is a copy of $(D^\pm(x))_{x\geq 1}$.
\item For a given network $\mathcal{T}_k$, we build
  $\mathcal{T}_{k+1}$ by ``exploring'' each type-$x$ in- and outstub,
  for each $x\geq 1$, in the following way. Replace each type-$x$
  outstub by a directed edge with length $x$ going to a new vertex $u$
  equipped with a collection of stubs randomly drawn independently
  of everything else with the distribution of $(D^\pm(x))_{x\geq 1}$
  biased by $D_x^-$, \textit{i.e.} if $L^{+}_x$ denotes such a random
  variable, then it is characterized by: for every measurable bounded
  function $f:(\mathbb{N}^{\mathbb{N}})^2 \to \mathbb{R}$,
\begin{equation}\label{eq:offspring_distrib}
  \mathbb{E}\big[f(L_x^{+})]\ =\
  \frac{1}{m_x}\mathbb{E}\big[D^-_xf((D^{\pm})_{x\geq 1})\big].
\end{equation}
One of the drawn type-$x$ instubs is used as the head of the directed
edge going to $u$. The exploration of instubs, and in particular the
definition of $L^-_x$, is done in the obvious analogous way.
\item The rooted directed random network $\mathcal{T}$ is the (unique)
  network consisting of all added vertices and edges in such a
  potentially infinite exploration procedure.
\end{itemize}
For $t\geq 1$, we define $\mathcal{T}|_t$ similarly as
$\mathcal{G}_N|_t$, by keeping only the directed paths, going from or
going to $1$, of length less than or equal to $t$.

In order to state our result, we need to use an alternative
description of an age-structured branching process linked to
$\mathcal{T}$. Let us introduce the countable state space
$\mathcal{X}= \{(x,a) : x\in \mathbb{N},\ a\leq x\}$ and focus first
on outstubs. Let us say that an edge in $\mathcal{T}$ is \emph{born}
at time $s\geq 0$ if it starts from a vertex $u$ such that
$\vec{d}(1,u)=s$. Then
$Z^+(t)=(Z^+_{x,a}(t))_{(x,a)\in \mathcal{X}}$ can be defined as
follows:
\[
  Z^+_{x,a}(t)\ =\ \text{number of edges in $\mathcal{T}$ with length $x$ born at time
  }t-a+1.
\]
Then by definition $(Z^+(t))_{t\geq 0}$ is a countable-type
Galton--Watson process where any particle of type $(x,a)$ with $a<x$
always reproduce by giving birth to a single particle of type
$(x,a+1)$, and particles of type $(x,x)$ reproduce by giving birth to
$L_{x,y}^+$ particles of type $(y,1)$ for each $y$, where
$L_x^+=(L_{x,y}^+)_{y\geq 1}$ is the biased random variable given
by~\eqref{eq:offspring_distrib}. We can see $Z^+$ as a branching
process of stubs (or potential infectious contacts) describing the
spread of the epidemic. We define $Z^-$ in the obvious analogous way.
When discussing the type $(x,a)$ of a stub, we say that $a$ is the age
of the stub.

To apply our results to the $Z^{\pm}$ processes, we must make some
minimal assumptions. Let $M^+,M^-\in \mathbb{R}^{\mathcal{X}^2}$ be
the respective mean matrices, which satisfy
\begin{equation}
  \label{eq:mean_matrices}
  \begin{aligned}
    M^\pm_{(x,a),(y,a')}\ &=\ \mathds{1}_{x=y, a'=a+1}, \qquad a < x \\
    M^+_{(x,x),(y,1)}\ &=\ \frac{1}{m_x}\mathbb{E}[D^-_x D^+_y] \\
    M^-_{(x,x),(y,1)}\ &=\ \frac{1}{m_x}\mathbb{E}[D^+_x D^-_y].
  \end{aligned}
\end{equation}
Let us assume that
\begin{equation}
  \label{eq:Mplus_satisfies_I}
  M^+\text{ satisfies Assumption~\ref{ass:recurrence} and is positive recurrent},
\end{equation}
and denote by $\rho>1$ its Perron eigenvalue, and by $\tilde{h}^+$ and
$h^+$ the associated left and right eigenvectors. Using the first line
of~\eqref{eq:mean_matrices}, we easily derive that for all
$(x,a)\in \mathcal{X}$,
\begin{equation} \label{eq:relations-hxa-hxx}
  \tilde{h}^+_{x,a}\ =\ \rho^{x-a}\tilde h^+_{x,x} \quad \text{and} \quad
  h^{+}_{x,a}\ =\ \rho^{a-x}h^+_{x,x}.
\end{equation}
Then, defining $\tilde{h}^-$ and $h^-$ by
\begin{equation}
  \label{eq:relations-h+h-}
  \tilde{h}^-_{x,a}\ \coloneqq\ \rho^{-a}m_x h^+_{x,x} \quad \text{and} \quad
  h^-_{x,a}\ \coloneqq\ \frac{\rho^a}{m_x} \tilde{h}^+_{x,x},
\end{equation}
using the remaining symmetry between $M^+$ and $M^-$, it is
straightforward to check that $M^-$ also satisfies
Assumption~\ref{ass:recurrence}, with $\rho$ as its Perron eigenvalue,
and with $\tilde{h}^-$ and $h^-$ the associated left and right
eigenvectors.

We will assume a technical condition involving the quantities related
to our problem. Recall a nonnegative sequence $(u_x)_{x\geq 1}$
grows subexponentially if it satisfies~\eqref{eq:subexponential_growth}, which we can also write $u_x\leq \mathrm{e}^{o(x)}$.
This property is stable under several operations: term-wise sums and
products, partial sums and partial supremums in particular. Our
technical assumption is the following:
\begin{equation}
  \label{eq:technical-condition-appli}
    m_x + \sigma_{x,-}+\sigma_{x,+} + h^+_{x,x}+h^-_{x,x} +
    \frac{1}{h^+_{x,x}} + \frac{1}{h^-_{x,x}} \ \leq \ \mathrm{e}^{o(x)}.
\end{equation}
This condition can be interpreted as a sort of regularity condition
regarding both the strength of potential contacts along time for a
given individual, and the variations between infected individuals of
different types (we recall that $h_{x,x}^+$ grossly quantifies how
many descendants an infection at infection-age $x$ will have on the
long term). As a sanity check, we point out
that~\eqref{eq:technical-condition-appli} is automatically satisfied
in the trivial ``homogeneous'' case, when the degree sequences are
i.i.d., in which case the values $\sigma^\pm_x$ and $h^\pm_{x,x}$ do
not depend on $x$.

Note that, in the definition of $\mathcal{T}$, the number of stubs
assigned to the root vertex is given by a copy of $(D^\pm(x))_{x\geq
  1}$. Therefore, in order to apply our results, we need 
$\mathbb{E}[\langle Z^\pm(0), h^\pm\rangle]$ to be finite. Using the
relationships~\eqref{eq:relations-hxa-hxx}, this translates to
\begin{equation*}
  \mathbb{E}\big[\langle Z^\pm(0), h^\pm\rangle\big] \
  =\ \sum_{x\geq 1} m_x h^\pm_{x,1}
  =\ \sum_{x\geq 1} \rho^{1-x}m_x h^\pm_{x,x} \ < \infty,
\end{equation*}
which is satisfied as a consequence
of~\eqref{eq:technical-condition-appli}.
Note that because of~\eqref{eq:relations-h+h-}, this also implies that
\begin{equation}
  \label{eq:sum_tilde_h_finite}
  \sum_{x\geq 1} \tilde{h}^\pm_{x,x} \ =\ \rho^{-1} \sum_{x\geq 1} m_x
  h^{\mp}_{x,1} \ < \infty.
\end{equation}

With these assumptions, one can define the additive martingales
$(\rho^{-n}Z^\pm(n))_{n\geq 0}$ and their pointwise limits $W^{\pm}$.

Let us point out that because of the initial condition (the degree
sequences of the source vertex), $W^+$ and $W^-$ are in general not
independent, but because of the branching property, they are
independent conditional on $Z^\pm(0)$.

In order to state our result, we introduce a final notation in the
context of the infection graph $\mathcal{G}_N$. For $t\geq 0$, let us
define
$X^+(t)=(X^+_{x,a}(t))_{(x,a)\in\mathcal{X}} \in
\mathbb{N}^\mathcal{X}$, where for each $(x,a)\in \mathcal{X}$,
$X^+_{x,a}(t)$ counts the number of outstubs in $\mathcal{G}_N|_{t}$
of type $x$ and with age $a$, \textit{i.e.} those attached to a vertex at
directed distance $t-a+1$ from the root. These variables describe the
future (relative to time $t$) infectious contacts in the population
during the epidemic. We define $X^-_{x,a}(t)$ analogously.

\begin{theorem}\label{thm:application}
  Under the
  assumptions~\eqref{eq:moments},~\eqref{eq:Mplus_satisfies_I}
  and~\eqref{eq:technical-condition-appli} and using the
  notation above, fix $\eta \in (0,1/2)$ and define $t_N$ by
  \begin{equation*}
    t_N\ =\  \big\lfloor \eta\log_{\rho} N\big\rfloor.
  \end{equation*}
  Then the following limit holds in distribution in
  $\mathbb{B}_{h^+}\times \mathbb{B}_{h^-}$:
  \[
    \rho^{-t_N}\big(X^+(t_N),\ X^-(t_N)\big)\
    \overset{d}{\underset{N\to\infty}{\longrightarrow}} \
    \big(W^+\tilde h^{+}, W^-\tilde h^{-}\big).
  \]
\end{theorem}
We prove this as a corollary of the following coupling result, whose proof is deferred to the appendix.
We use $d_{\mathrm{TV}}$ to denote the total variation distance.

\begin{prop}\label{prop:coupling}
Assume the above assumptions hold, and let $t_N$ be defined as in
Theorem~\ref{thm:application}. Then,
\begin{equation*}
d_{\mathrm{TV}}\big(\mathcal{G}|_{t_N},\mathcal{T}|_{t_N}\big)\ \underset{N\to\infty}{\longrightarrow}\ 0.
\end{equation*}
\end{prop}

\begin{proof}[Proof of Theorem~\ref{thm:application}]
  By Theorem~\ref{thm:type_convergence}, the type distribution of
  $\rho^{-n} Z^{\pm}(t_N)$ converges in probability to
  $W^{\pm}\tilde h^{\pm}$ (the convergence is almost sure if $W^\pm=0$
  almost surely, and in $L^1$ otherwise). By
  Proposition~\ref{prop:coupling}, we can build versions of
  $X^\pm(t_N)$ (as $N$ varies) that are equal to $Z^{\pm}(t_N)$ with
  high probability, therefore the result follows.
\end{proof}

\begin{appendix}
\section{Proof of Proposition~\ref{prop:coupling}}

We now decompose the proof of Proposition~\ref{prop:coupling} in two
lemmas.

\begin{lem} \label{lem:proof_coupling_1} Using the notation of
  Section~\ref{sec:applications}, the following holds for all $t\geq 1$:
  \begin{multline}
    \label{eq:bound_TV_lemma}
      d_{\mathrm{TV}}\big(\mathcal{G}|_{t},\mathcal{T}|_{t}\big) \
      \leq\ \mathbb{P}\big(\abs{\mathcal{T}|_t} > N\big) \\
      +  \mathbb{E}\Big[1\wedge \bigg(\sum_{x\leq
        t}\frac{E_x(\mathcal{T}|_{t})}{m_x} \bigg(
        \Big(\frac{{K}_N^{x}}{N} - m_{x}\Big)_+
        +\frac{S_x(\mathcal{T}|_{t})}{N} (m_x + 1) \bigg)
        \bigg)\Big],
    \end{multline}
  where $\abs{\mathcal{T}|_t}$ denotes the number of vertices in
  $\mathcal{T}|_t$, $E_x(\mathcal{T}|_{t})$ the number of length-$x$
  edges in $\mathcal{T}|_t$, $S_x(\mathcal{T}|_{t})$ the sum of
  type-$x$ degrees, \textit{i.e.} the number of type-$x$ stubs and
  half-edges in $\mathcal{T}|_{t}$, and where the random variables
  $({K}_N^x)_{x\geq 1}$ are independent of $\mathcal{T}|_{t}$ and with
  distribution given by
  \begin{equation}\label{eq:K_N_x}
    K^x_N\ =\ \max\Big(\sum_{i=1}^N D^{+}_{i}(x),\ \sum_{i=1}^N D^{-}_{i}(x) \Big).
  \end{equation}
\end{lem}

\begin{proof}
Let us start with two important technical remarks before trying to
couple exactly $\mathcal{G}_N|_{t}$ and $\mathcal{T}|_{t}$. First
note that in these objects, by definition there are no edges with
length greater than $t$. Therefore when doing computations for fixed
$t$, we may fix an arbitrary cutoff value $T>t$ and couple the
modifications of the rooted decorated directed networks obtained by
removing all stubs with type $x\geq T$. It will turn out that the total
variation bound we get does not depend on $T$, so letting
$T\to\infty$, we will get the same total variation bound between the
unmodified networks $\mathcal{G}_N|_{t}$ and $\mathcal{T}|_{t}$.
Removing large stubs boils down to considering random variables
$(D^\pm(x))_{1\leq x\leq T}$ taking values in a discrete space,
which simplifies total variation computations.

Secondly, the exact computations that follow are much easier if we add
some combinatorial structure on the networks, namely an \emph{ordering
  of stubs and edges} in the following way. For each vertex $v$, and
type $x$, we order the finite set of type-$x$ outstubs and outgoing
edges attached to $v$ with a uniformly chosen total order; the same is
done for the set of type-$x$ instubs and incoming edges. Let us denote
by $\overline{\mathcal{G}}_N|_{t}$ and
$\overline{\mathcal{T}}|_{t}$ the \emph{ordered} rooted decorated
directed networks we obtain; for simplicity, let us call these
combinatorial objects \emph{configurations}. We will additionally
denote by $\overline{\mathcal{G}}_N|_{t,T}$ and
$\overline{\mathcal{T}}|_{t,T}$ the configurations with type-$x$
stubs removed when $x\geq T$.

For a fixed configuration $\mathcal{C}$ with $k$ vertices, in the
following let us fix an arbitrary labeling of vertices by integers
from $1$ to $k$, such that $1$ always denotes the root. We write
$d_i^\pm=(d_i^\pm(x))_{x\geq 1}$ for the in- or outdegrees of vertex
$i$ (counting type-$x$ edges and stubs). Let us write
$\mathscr{C}_{t,T}$ for the set of configurations $\mathcal{C}$ that
are without loops (\textit{i.e.} tree-like) and
(a) are compatible with our notion of restriction at distance $t$ in the
sense that $\mathcal{C}|_t = \mathcal{C}$, and (b) have no edges and
stubs with type or length $x\geq T$.
In other words, $\mathscr{C}_{t,T}$ is such that
\[
  \mathbb{P}\big(\overline{\mathcal{T}}|_{t,T} \in
  \mathscr{C}_{t,T}\big)\ =\ 1.
\]
These technical definitions finally allow for easy computations.
Indeed, for instance when a type-$x$ outstub attached to a vertex $u$
is explored in the definition of $\mathcal{T}$, the stubs attached to
the new vertex $v$ are distributed as $L^+_x$ and the rank of the
instub attached to $v$ is uniform among the $d_v^-(x)$ possibilities.
As a result, the probability to see a fixed degree distribution
$(d_v^\pm(y))_{1\leq y \leq T}$ and fixed ordering of the stubs of $v$ is
\[
  \frac{d_v^-(x)}{m_x}\mathbb{P}\big((D^\pm(y))_{1\leq y < T}=
  (d_v^\pm(y))_{1\leq y\leq T}\big) \times \frac{1}{d_v^-(x)} \ =\
  \frac{1}{m_x} p_T(d_v^\pm),
\]
where we write for conciseness
\[
  p_T(\,\cdot\,) \ \coloneqq\ \mathbb{P}\big((D^\pm(x))_{1\leq x < T}= (\,\cdot\,)_{1\leq x < T}\big).
\]
Exchanging the role of $+$ and $-$ reveals the same computation for
the exploration of instubs. As a result, using the branching structure
that defines $\mathcal{T}$, we simply get, for any
$\mathcal{C}\in \mathscr{C}_{t,T}$,
\begin{equation}\label{eq:prob_C_branching}
  \mathbb{P}\big( \overline{\mathcal{T}}|_{t,T} = \mathcal{C}\big)
  \ =\ p_T(d^\pm_{1})
  \prod_{\substack{e: u\rightarrow v \\ \text{or }u\leftarrow v}} \frac{1}{m_{x_e}}p_T(d^\pm_{v}),
\end{equation}
where the notation $e:u\to v$ means that we are considering edges
directed away from the root, going from $u$ to $v$, and
$e:u\leftarrow v$ means that we consider edges directed towards the
root, going from $v$ to $u$. In both cases $x_e$ denotes the length of
the edge.

The computation of
$\mathbb{P}(\overline{\mathcal{G}}_N|_{t,T} = \mathcal{C})$ is
slightly more complex, and relies mainly on the fact that the
probability that $k$ given matches are present in a uniform bipartite
matching with $K$ total matches is simply
\[
  \frac{1}{(K)_{k}}\ =\ \frac{1}{K (K-1) \cdots (K-k+1)} \ \geq\ \frac{1}{K^k}.
\]
Now recall that $\mathcal{G}_N$ is built from matching in- and
outstubs. Therefore, let us write $K^x_N$ for the total number of
matches of type-$x$ stubs needed to build $\mathcal{G}_N$,
\textit{i.e.} we define $K^x_N$ by~\eqref{eq:K_N_x}.
For a fixed configuration $\mathcal{C}$ with $k\leq N$ vertices
labeled arbitrarily from $1$ to $k$, let us denote by
$A_{\mathcal{C}}$ the event
\[
  A_{\mathcal{C}}\ \coloneqq\ \big\{ \forall 1\leq i\leq k, \ D^{\pm}_{i} = d^\pm_i \big\},
\]
where, as above, $(d^\pm_i)_{1\leq i\leq k}$ denote the degree
sequences of the vertices of $\mathcal{C}$. We claim that
\begin{equation}
  \label{eq:prob_C_graph}
  \mathbb{P}\big( \overline{\mathcal{G}}_N|_{t,T} = \mathcal{C} \big)\ =\
  (N-1)_{k-1} \,\cdot\, \prod_{i=1}^k p_T(d^\pm_i) \,\cdot\,
  \mathbb{E}\Big[\prod_{x=1}^t \frac{1}{(K_N^x)_{E_x}} \ \Big\vert\ A_{\mathcal{C}} \Big],
\end{equation}
where $E_x=E_x(\mathcal{C})$ denotes the number of length-$x$ edges in $\mathcal{C}$.
To obtain this, we argue that:
\begin{itemize}
\item The first factor counts the number of ways to label every vertex
  except from the root in configuration $\mathcal{C}$ with integers
  from $2$ to $N$.
\item The second factor is the probability to assign exactly the right
  degree sequences to these $k$ fixed vertices (recall that in the
  construction of $\mathcal{G}_N$, vertices are equipped with degree
  sequences in an i.i.d.\ way).
\item The third factor is the probability that to obtain the partial
  matchings given by the edges of $\mathcal{C}$ in each of the
  configuration models defining $\mathcal{G}_N$.
\end{itemize}
The key is that, when $\mathcal{C}\in \mathscr{C}_{t,T}$ we may
use~\eqref{eq:prob_C_branching},~\eqref{eq:prob_C_graph} and the fact that $k-1$ is the number
of edges to bound
\begin{align}\nonumber
  \mathbb{P}\big( \overline{\mathcal{G}}_N|_{t,T} = \mathcal{C} \big)\ 
  &\geq\ 
    \prod_{i=1}^k p_T(d^\pm_i) \,\cdot\,
    \mathbb{E}\Big[\prod_{e\in \mathcal{C}}
  \frac{N-k+1}{K_{N}^{x_e}} \ \Big\vert\ A_{\mathcal{C}} \Big], \\
  &=\ \mathbb{P}\big( \overline{\mathcal{T}}|_{t,T} = \mathcal{C} \big) \,\cdot\,
    \mathbb{E}\Big[\prod_{e\in \mathcal{C}}
  \frac{(N-k+1)m_{x_e}}{K_{N}^{x_e}} \ \Big\vert\ A_{\mathcal{C}} \Big],
    \label{eq:prob_C-G_compare}
\end{align}
where the product over $e\in \mathcal{C}$ is a slight abuse of
notation for the product over the edges of $\mathcal{C}$, that we
wrote before $e: i\to j$ or $e:j\to i$.

From now on we will use sums over $\mathcal{C}$ in $\mathscr{C}_{t,T}$
so to avoid confusion, we denote by $\abs{\mathcal{C}}$ the number of
vertices of a configuration. We exploit the following form of the
distance in total variation:
\begin{equation*}
    d_{\mathrm{TV}}\big(\overline{\mathcal{G}}_{N}|_{t,T},\overline{\mathcal{T}}|_{t,T}\big)\
    \leq \ \sum_{\mathcal{C}\in \mathscr{C}_{t,T}, \abs{\mathcal{C}}\leq N}
    \big(\mathbb{P}(\overline{\mathcal{T}}|_{t,T} = \mathcal{C}) -
    \mathbb{P}(\overline{\mathcal{G}}_{N}|_{t,T} = \mathcal{C})\big)_+ 
    + \mathbb{P}\big(\big|\overline{\mathcal{T}}|_{t,T}\big| > N \big).
\end{equation*}
As the last term does not depend on $T$ nor on the ordering of
$\mathcal{T}$, we recover the first term on the right-hand side
of~\eqref{eq:bound_TV_lemma}. We now deal with the sum.
Using \eqref{eq:prob_C-G_compare} and the inequality
$(1-\prod_i a_i)_+ \leq 1\wedge \big(\sum_i(1- a_i)_+)$ for any
family of nonnegative terms $a_i$ (this is readily checked by
induction), we obtain that
\begin{align}\nonumber
  \sum_{\mathcal{C}\in \mathscr{C}_{t,T}, \abs{\mathcal{C}}\leq N}
    \big(\mathbb{P}&(\overline{\mathcal{T}}|_{t,T} = \mathcal{C}) -
    \mathbb{P}(\overline{\mathcal{G}}_{N}|_{t,T} = \mathcal{C})\big)_+ \\ \nonumber
  &\quad \leq\ \sum_{\abs{\mathcal{C}}\leq N}
    \mathbb{P}\big(\overline{\mathcal{T}}|_{t,T} =
    \mathcal{C}\big)\mathbb{E}\Big[\Big(1-\prod_{e \in \mathcal{C}}\frac{N m_{x_e}(1-(|\mathcal{C}|-1)/N)}{K_{N}^{x_e}-\kappa^{x_e}_e}\Big)_+\, \Big\vert\, A_{\mathcal{C}}\Big]
  \\ \nonumber
  &\quad \leq\ \sum_{|\mathcal{C}|\leq N}
    \mathbb{P}\big(\overline{\mathcal{T}}|_{t,T} =
    \mathcal{C}\big)\mathbb{E}\Big[1\wedge \bigg(\sum_{e\in
    \mathcal{C}}\Big(1-\frac{N m_{x_e} }{K_N^{x_e}}\Big)_++
    \frac{|\mathcal{C}| - 1}{N} \bigg) \, \Big\vert\, A_{\mathcal{C}}\Big]
  \\ \label{eq:TV_bound}
  &\quad \leq\ \sum_{|\mathcal{C}|\leq N}
    \mathbb{P}\big(\overline{\mathcal{T}}|_{t,T} = \mathcal{C}\big)\,
    \mathbb{E}\Big[1\wedge \bigg(\sum_{x\leq t}E_x(\mathcal{C}) \Big(\Big(1-\frac{N m_{x}
    }{K_N^{x}}\Big)_+ + \frac{1}{N}\Big) \bigg) \, \Big\vert\, A_{\mathcal{C}}\Big],
\end{align}
where we recall that $E_x(\mathcal{C})$ denotes the number of edges of
length $x$ in $\mathcal{C}$. In order to remove the conditioning
in~\eqref{eq:TV_bound}, we bound $K_N^x$ using a copy of itself that
does not involve vertices in $\mathcal{C}$: on the event
$A_{\mathcal{C}}$, for all $x\leq t$,
\[
  K_N^x\ \leq\ S_x(\mathcal{C}) + \tilde{K}_N^x, \qquad \text{with
  }\tilde{K}_N^x\ \coloneqq\
  \bigg( \sum_{i=\abs{\mathcal{C}}+1}^{\abs{\mathcal{C}}+N}
  D^+_i(x)\bigg)\vee \bigg(\sum_{i=\abs{\mathcal{C}}+1}^{\abs{\mathcal{C}}+N}
  D^-_i(x)\bigg),
\]
where $S_x(\mathcal{C})$ is defined by
\[
  S_x(\mathcal{C}) \ \coloneqq\ \sum_{u\in \mathcal{C}}(d_u^+(x)+d_u^-(x)).
\]
Since for $a, b>0$, we have
$(1-1/(a+b))_+ \leq (a-1+b)_+\leq (a-1)_+ + b$, it
follows that
\[
  \Big(1-\frac{N m_{x} }{K_N^{x}}\Big)_+\ \leq\ \Big(\frac{\tilde{K}_N^{x}}{N
    m_{x}} - 1\Big)_+ + \frac{S_x(\mathcal{C})}{Nm_x}.
\]
Plugging this, we can bound~\eqref{eq:TV_bound} by
\begin{equation}
  \label{eq:bound_TV_2}
  \mathbb{E}\bigg[1\wedge \bigg(\sum_{x\leq
    t}E_x(\overline{\mathcal{T}}|_{t,T})\bigg(
  \Big(\frac{\tilde{K}_N^{x}}{N m_{x}} - 1\Big)_+ + \frac{S_x(\overline{\mathcal{T}}|_{t,T})}{N} \Big(\frac{1}{m_x} + 1 \Big)
  \bigg)\bigg],
\end{equation}
where $\mathcal{T}$ is independent of the variables
$(\tilde{K}_N^x)_{x\geq 1}$. Note that
$E_x(\overline{\mathcal{T}}|_{t,T})$ and
$S_x(\overline{\mathcal{T}}|_{t,T})$ do not depend on $T$ nor on the
ordering of edges and stubs, so $\overline{\mathcal{T}}|_{t,T}$ may be
simply replaced by $\mathcal{T}|_{t}$. We have shown a total variation
bound on the \emph{ordered} versions of $\mathcal{G}_N|_t$ and
$\mathcal{T}|_t$, and removing the orders may only decrease this
distance, so this concludes the proof of the lemma.
\end{proof}

\begin{lem} \label{lem:proof_coupling_2}
  For all $x,N\geq 1$, 
  \begin{multline}
  \label{eq:bound_S_x_2}
    \mathbb{E}[S_x(\mathcal{T}|_{t_N})]\ \leq\ \Big(\sup_{y\leq
    t_N}\big(h^-_{y,y}\wedge h^+_{y,y}\big)^{-1} \Big)
     \frac{\rho^{t_N+2}}{\rho-1}
    \Big(\tilde{h}^+_{x,1} +
      \tilde{h}^-_{x,1} + m_x \sum_{x\geq 1}(\tilde{h}^+_{x,x} + \tilde{h}^-_{x,x}) \\
   + \frac{\sigma_{x,+}}{\rho}\sum_{y\leq t_N}
      h_{y,y}^- \sigma_{y,+} + \frac{\sigma_{x,-}}{\rho}\sum_{y\leq t_N}
      h_{y,y}^+ \sigma_{y,-} \Big),
\end{multline}
and
\begin{equation} \label{eq:bound_E_x}
  \mathbb{E}\Big[\frac{E_x(\mathcal{T}|_{t_N})}{m_x}\Big]\ \leq\
  \Big(\sup_{y\leq t_N}\big(h^-_{y,y}\wedge h^+_{y,y}\big)^{-1} \Big)
  \frac{\rho^{t_N+2}}{\rho-1} (h^-_{x,x}+h^+_{x,x}).
\end{equation}
Hence under Assumption~\eqref{eq:technical-condition-appli}, the subexponential bound follows:
\begin{equation}
  \label{eq:bounds_E_and_S}
  \sum_{x\leq t_N}\rho^{-t_N}
  \Big(\mathbb{E}\Big[\frac{E_x(\mathcal{T}|_{t_N})}{m_x}\Big]
  + \mathbb{E}\big[S_x(\mathcal{T}|_{t_N})\big] \Big) \ \leq\
  \mathrm{e}^{o(t_N)}.
\end{equation}
\end{lem}

\begin{proof}
Let us come back to the branching process to count the stubs in
$\mathcal{T}|_{t_N}$. Let us focus on the $x$-instubs. Those that are
connected to a vertex $v$ that may reach the root (\textit{i.e.}
such that $\vec{d}(v,1)\leq t_N$) are simply counted by
\[
  \sum_{t\leq t_N}Z^-_{x,1}(t).
\]
To count those that are connected to a vertex $v$ reachable from the
root, we first count those vertices and then use the branching
property: given that the edge pointing to $v$ is of type $y$, the mean
number of $x$-instubs is by definition given by
$1/m_y \mathbb{E}[D_y^-D_x^-]$. A similar reasoning holds for
$x$-outstubs, so after some minor factoring, we get
\begin{multline}
  \label{eq:bound_S_x}
    \mathbb{E}\big[S_x(\mathcal{T}|_{t_N})\big]\
     =\ \sum_{t\leq t_N}\mathbb{E}\big[Z^+_{x,1}(t) + Z^-_{x,1}(t)\big] \\
     + \sum_{t\leq t_N} \sum_{y\leq t_N} \frac{1}{m_y}\Big(
      \mathbb{E}\big[Z^+_{y,y}(t)\big]\mathbb{E}\big[D_y^-D_x^-] + \mathbb{E}\big[Z^-_{y,y}(t)\big]\mathbb{E}\big[D_y^+D_x^+] \Big).
  \end{multline}
To bound the $Z^\pm$ terms, we may use the properties of the
matrices $M^\pm$. Indeed, recall that
$\mathbb{E}[Z^\pm_{x,a}(0)]=m_x\mathds{1}_{a=1}$ for all
$(x,a)\in \mathcal{X}$. By the age structure of the branching
process $Z^+$, we have for all $t\leq t_N$ and $(y,b)\in \mathcal{X}$,
\begin{align*}
  \mathbb{E} \big[Z^+_{y,b}(t)\big]\ &=\ \Big((m_x\mathds{1}_{\{a=1\}})_{(x,a)\in
    \mathcal{X}} \times (M^+)^t\Big)_{y,b}\\
    &=\ \Big((m_x\mathds{1}_{\{a=1,x\leq
  t_N\}})_{(x,a)\in
    \mathcal{X}} \times (M^+)^t\Big)_{y,b},
\end{align*}
where $\times$ denotes matrix multiplication.
Note that
\[
  m_x\mathds{1}_{\{a=1,x\leq t_N\}}\ \leq\ \alpha_N
  \tilde{h}^+_{x,a}\wedge \tilde{h}^-_{x,a}, \qquad (x,a)\in \mathcal{X},
\]
with
\[
  \alpha_N^{-1}\ =\ \inf_{x\leq t_N} \frac{\tilde{h}^+_{x,1}\wedge\tilde{h}^-_{x,1}}{m_x}\ =\
  \rho^{-1} \inf_{x\leq t_N} h^-_{x,x} \wedge h^+_{x,x},
\]
where we used the relations~\eqref{eq:relations-h+h-}. Therefore, we get
$\mathbb{E} [Z^+_{y,b}(t)] \leq \alpha_N \rho^t \tilde{h}^+_{y,b}$
(and a similar bound for $Z^-$), so plugging this
into~\eqref{eq:bound_S_x}, we get
\[
  \mathbb{E}[S_x(\mathcal{T}|_{t_N})]\ \leq\ \alpha_N
  \frac{\rho^{t_N+1}}{\rho-1} \Big(\tilde{h}^+_{x,1} +
  \tilde{h}^-_{x,1} + \sum_{y\leq t_N} \frac{1}{m_y}\Big(
  \tilde{h}^+_{y,y}\mathbb{E}[D_y^-D_x^-] +
  \tilde{h}^-_{y,y}\mathbb{E}[D_y^+D_x^+] \Big)\Big).
\]
We now use the straightforward bound
\[
  \mathbb{E}[D^\pm_xD^\pm_y]\ \leq\ m_xm_y + \sigma_{x,\pm}\sigma_{y,\pm},
\]
where we recall that $\sigma^2_{x,\pm}$ denotes the variance of
$D^\pm_x$, to get~\eqref{eq:bound_S_x_2} after some simplification.
The inequality~\eqref{eq:bound_E_x} is obtained by similar
computations which we leave to the reader. Note that $\sum_{x\geq
  1}\tilde{h}^\pm_{x,x}$ is finite by~\eqref{eq:sum_tilde_h_finite}, and
that by~\eqref{eq:relations-h+h-}, we can rewrite $\tilde{h}^\pm_{x,1}
= \rho^{-1} m_x h^\mp_{x,x}$.
Finally~\eqref{eq:bounds_E_and_S} is obtained by noting that our
bounds divided by $\rho^{t_N}$ are obtained from sequences growing
subexponentially -- by the
assumption~\eqref{eq:technical-condition-appli} -- only through
operations preserving the property of subexponential growth (sums,
products, partial sums and supremums).
\end{proof}

We can now combine Lemmas~\ref{lem:proof_coupling_1}
and~\ref{lem:proof_coupling_2} to show Proposition~\ref{prop:coupling}.

\begin{proof}[Proof of Proposition~\ref{prop:coupling}]
We now take $t$ to equal $t_N$ defined as in
Theorem~\ref{thm:application}, and show
that~\eqref{eq:bound_TV_lemma} tends to $0$ as $N\to\infty$. First let
us tackle the second term on the right-hand side. Because
of the bound by $1$ inside the expectation, it suffices to show that
the random variable
\begin{equation}
  \label{eq:rv_goes_to_0}
    \sum_{x\leq
      t}\frac{E_x(\mathcal{T}|_{t})}{m_x} \bigg(
      \Big(\frac{{K}_N^{x}}{N} - m_{x}\Big)_+
      +\frac{S_x(\mathcal{T}|_{t})}{N} (m_x + 1) \bigg)
\end{equation}
goes to $0$ in probability.

For the first part, recalling that $\sigma_{x,\pm}^2$ is the
variance of $D^{\pm}_x$, we can crudely bound
\begin{align*}
  \mathbb{E}\Big[\Big(\frac{\tilde{K}_N^{x}}{N}-m_x\Big)_+\Big]\
  &\leq\ \mathbb{E}\Big[\Big|\sum_{i=1}^N \frac{D^+_i(x)}{N} - m_x\Big| + \Big|\sum_{i=1}^N \frac{D^-_i(x)}{N} - m_x\Big|\Big]\\
  &\leq\ \frac{1}{\sqrt{N}} (\sigma_{x,+}+\sigma_{x,-}).
\end{align*}
Putting the last display together with~\eqref{eq:bound_E_x} yields
\begin{equation*}
    \mathbb{E} \bigg[\sum_{x\leq t_N}\frac{E_x(\mathcal{T}|_{t_N})}{m_x}
  \Big(\frac{\tilde{K}_N^{x}}{N} - m_x\Big)_+\bigg] \
   =\ O\bigg(\frac{\rho^{t_N}}{\sqrt{N}} \sum_{x\leq t_N}
(\sigma_{x,+}+\sigma_{x,-})\frac{h^+_{x,x}+h^-_{x,x}}{\inf_{y\leq t_N}
    h^-_{y,y}\wedge h^+_{y,y}} \bigg),
\end{equation*}
which tends to $0$ by~\eqref{eq:technical-condition-appli}
because $\rho^{t_N}=O(N^{\eta})$, where $\eta<1/2$. For the second part
of~\eqref{eq:bound_TV_2}, we first take square roots before taking the expectation and applying the Cauchy--Schwarz inequality:
\begin{align*}
  \mathbb{E}\bigg[ \bigg(\sum_{x\leq
    t_N}&\frac{E_x(\mathcal{T}|_{t_N})}{m_x}\frac{S_x(\mathcal{T}|_{t_N})}{N}
    (m_x + 1) \bigg)^{\frac{1}{2}} \bigg] \\
  &\leq\ \frac{1}{\sqrt{N}} \mathbb{E}\bigg[ 1 + \sum_{x\leq
    t_N}\bigg(\frac{E_x(\mathcal{T}|_{t_N})}{m_x}S_x(\mathcal{T}|_{t_N})
    (m_x + 1) \bigg)^{\frac{1}{2}} \bigg] \\
  &\leq\ \frac{1}{\sqrt{N}} \bigg(1 + \sum_{x\leq
    t_N}\mathbb{E}\Big[\frac{E_x(\mathcal{T}|_{t_N})}{m_x}\Big]^{\frac{1}{2}}
    \mathbb{E}[S_x(\mathcal{T}|_{t_N})]^{\frac{1}{2}}
    (m_x + 1)^{\frac{1}{2}}\bigg) \\
  &\leq\ \frac{1}{\sqrt{N}} \bigg(1 + \bigg(\sum_{x\leq
    t_N}\mathbb{E}\Big[\frac{E_x(\mathcal{T}|_{t_N})}{m_x}\Big]
    \bigg)^{\frac{1}{2}} \bigg(\sum_{x\leq t_N}
    \mathbb{E}[S_x(\mathcal{T}|_{t_N}) (m_x +
    1)]\bigg)^{\frac{1}{2}} \bigg).
\end{align*}
Using~\eqref{eq:bounds_E_and_S} and the fact that
$\rho^{t_N}=O(N^\eta)$ with $\eta<1/2$, this tends to $0$ as
$N\to \infty$. Therefore, we have shown that the random
variable~\eqref{eq:rv_goes_to_0} converges to $0$ in probability, which implies that the second term on the right-hand side
of~\eqref{eq:bound_TV_lemma} vanishes. Bounding
$\abs{\mathcal{T}|_{t_N}} - 1$ by the sum of the
$E_x(\mathcal{T}|_{t_N})$, similar computations easily yield
$\mathbb{P}(|\mathcal{T}|_{t_N}|>N)\to 0$, so the proof is complete.
\end{proof}
\end{appendix}

\paragraph{Acknowledgements.} We thank Maxime Ligonnière for insightful discussions regarding the (local or not) extinction of the branching process, in particular concerning the positive recurrent example where $\{W=0\}=\mathrm{LocExt}\neq \mathrm{Ext}$.

We acknowledge funding through the ANR project GARP (ANR-24-CE40-7154).
Mathilde André acknowledges funding from the Center for Interdisciplinary Research in Biology (CIRB), Collège de France and the École Polytechnique.

\phantomsection
\addcontentsline{toc}{section}{References}


\end{document}